\newtheorem{remark}{Remark}[section]
\title{Set-Valued return function and generalized solutions for Multiobjective Optimal Control  Problems (MOC) \footnotemark[1]}
\author{A. Guigue \footnotemark[1]}
\begin{document}

\maketitle

\renewcommand{\thefootnote}{\fnsymbol{footnote}}
\footnotetext[1]{Department of Mathematics, The University of British Columbia, Room 121, 1984 Mathematics Road,
Vancouver, B.C., Canada, V6T 1Z2 (aguigue@math.ubc.ca).}

\begin{abstract}
In this paper, we consider a multiobjective optimal control problem where the
preference relation in the objective space is defined in terms of a pointed convex
cone containing the origin, which defines generalized Pareto optimality. For
this problem, we introduce the set-valued return function $V$ and provide
a unique characterization for $V$ in terms of contingent derivative and coderivative for
set-valued maps, which extends  two previously introduced notions of generalized solution to the Hamilton-Jacobi
equation for single objective optimal control problems.
\end{abstract}

\begin{keywords}
Multiobjective optimal control, generalized Pareto optimality,
dynamic programming,  set-valued maps, contingent derivative, coderivative
\end{keywords}

\begin{AMS}
49K15, 49L20, 54C60, 90C29
\end{AMS}
% http://www.ams.org/msc/
% 49L20 Dynamic programming method
% 90C29 Multi-objective and goal programming
% 49K15 Problems involving ordinary differential equations
% 54C60 Set-valued maps [See also 26E25, 28B20, 47H04, 58C06]

\pagestyle{myheadings} \thispagestyle{plain} \markboth{A. GUIGUE}{GENERALIZED SOLUTIONS FOR (MOC)}

\section{Introduction}

Many engineering applications, such as trajectory planning for spacecraft~\cite{MOC_appl_6} and robotic manipulators~\cite{Guigue10}, continuous casting of steel~\cite{MOC_appl_5}, etc., can lead to an optimal control formulation where $p$ objective functions ($p > 1$) need to be optimized simultaneously. For multiobjective
optimization problems, optimality is determined by the preference of the decision maker, which is expressed
in terms of a binary relation. In this paper, we will consider the preference defined in terms of a pointed convex cone $P \subset \mathbf{R}^p$ containing the origin \cite{Yu74_moo}, which, when $P=\mathbf{R}^p_+$, yields the well-known Pareto optimality. The derivation of optimality conditions for multiobjective optimal control  problems (MOC)
with more general preferences than the one considered in this paper has been a subject of recent interest. Mostly,
the approach \cite{Bellaassali04_opt, Kien08_opt, Zhu01_opt} has been to derive necessary optimality conditions. In this paper, we take a different approach, treating (MOC) in the framework of dynamic programming~\cite{OptConTh8_book}. Namely, for an autonomous multiobjective optimal control problem  with fixed end time  where the dynamics are governed by a differential equation, we first define a set-valued return function $V$. Using the concepts of contingent derivative \cite[p.~181]{SVA_book} and coderivative \cite{Mordukhovich93_moo} for set-valued maps, we then provide a unique characterization for $V$, which extends the notions of generalized solution to the Hamilton-Jacobi equation for single objective optimal control problems introduced in  \cite[p.~454]{OptConTh5_book}.\\

A work related  to ours is \cite{Caroff08_MOC}, where the same multiobjective optimal control problem is considered, but the preference is  given  by the lexicographical order. In \cite{Caroff08_MOC}, a ``vector Value function'' for the problem is first defined. Two notions of solution to a suitable system of Hamilton-Jacobi equations are then introduced: the notion of contingent solution, which uses the concept of contingent vector epiderivative,  and the notion of vector viscosity solution. The main result in \cite{Caroff08_MOC} is to show that the ``vector Value function'' is the unique vector extended lower semicontinuous contingent and vector viscosity solution. This is also the program
followed in  this paper for (MOC). More precisely, we start by defining the set-valued return function $V(\cdot,\cdot): [0,T] \times \mathbf{R}^n  \rightarrow 2^{\mathbf{R}^p}$ as the set-valued map associating with each time $t \in [0,T]$ and state $\mathbf{x} \in \mathbf{R}^n$ the set of generalized Pareto optimal elements (which we call minimal elements) in the objective space $Y(t,\mathbf{x})$, where $Y(t,\mathbf{x})$ is the set of all possible values that can be taken by the vector-valued objective function for trajectories starting at~$\mathbf{x}$ at time $t$. The set-valued return function $V$ is an \emph{extremal element map}, i.e., a map whose set values only contain minimal elements. For such maps, using the concepts of contingent derivative and coderivative for set-valued maps, we then extend the two notions of generalized solution to the Hamilton-Jacobi equation for single objective optimal control problems  introduced in \cite[p.~454]{OptConTh5_book}. We call these two extended notions generalized contingent solution and generalized proximal solution for (MOC). Our main result is to show that $V$ is the unique set-valued map $W$
 generalized contingent and proximal solution such that the set-valued map $W_\uparrow := W+P$ is outer semicontinuous. The proof of this result is made of several steps. First, we show that $V$ is a generalized contingent solution and that $V_\uparrow$ is outer semicontinuous. In the process of this proof, we state the multiobjective dynamic programming equation for~(MOC). Next, we show  that generalized contingent solution implies generalized proximal solution. We finally obtain uniqueness for extremal element maps using invariance theorems \cite{OptConTh5_book}.\\

This paper is organized as follows. In \S \ref{s:P}, we detail the class of multiobjective optimal control problems considered and define the set-valued return function $V$. In \S \ref{s:mathprel}, we then provide useful mathematical definitions. In \S \ref{s:moogeneral}, we discuss the concept of optimality  in multiobjective optimization and present several properties of the minimal element set. Also, we provide a general sufficient condition for  $V$ to be a Lipschitz set-valued map. In \S \ref{s:moo}, we state the multiobjective dynamic programming equation for (MOC). In \S \ref{s:hjb}, we  present the notion of generalized contingent solution.  In \S \ref{s:setvalued}, we prove that $V$ is a generalized contingent solution and that $V_\uparrow$ is outer semicontinuous. Finally, in \S \ref{s:proximal},
we  present the notion of generalized proximal  solution, show that generalized contingent solution implies generalized proximal solution, and prove that $V$ is the unique generalized proximal solution.

\section{A multiobjective finite-horizon optimal control problem (MOC) \cite{OptConTh8_book}} \label{s:P}

Consider the evolution over a fixed finite time interval $I =
[0,T] \ ( 0 < T < \infty)$ of an autonomous dynamical system whose
$n$-dimensional state dynamics are given by a continuous function
$\mathbf{f}(\cdot,\cdot):  \mathbf{R}^n \times U
\rightarrow \mathbf{R}^{n}$, where the control space $U$ is a
nonempty compact subset of $\mathbf{R}^m$.
The function $\mathbf{f}(\cdot,\mathbf{u})$ is assumed to be
Lipschitz, i.e., some $K_{\mathbf{f}} > 0$ obeys
\begin{equation} \label{eq:lipf}
\forall \mathbf{u} \in U, \ \forall
\mathbf{x_1},\mathbf{x_2} \in \mathbf{R}^{n}, \ \|
\mathbf{f}(\mathbf{x_1},\mathbf{u}) -
\mathbf{f}(\mathbf{x_2},\mathbf{u}) \| \leq K_{\mathbf{f}} \|
\mathbf{x_1} - \mathbf{x_2} \|.
\end{equation}
We also assume that the function $\mathbf{f}$ is uniformly bounded, i.e., some
$M_\mathbf{f} > 0$ obeys
\begin{equation} \label{eq:fbounded}
\forall \mathbf{x} \in \mathbf{R}^n, \ \forall \mathbf{u} \in U, \
\| \mathbf{f}(\mathbf{x},\mathbf{u}) \| \leq M_\mathbf{f}.
\end{equation}
A control $\mathbf{u}(\cdot): I \rightarrow U$ is a bounded,
Lebesgue measurable function. The set of  controls
is denoted by $\mathcal{U}$. The continuity of $\mathbf{f}$ and the Lipschitz condition (\ref{eq:lipf})
guarantee that, given any $t \in I$, initial state $\mathbf{x} \in \mathbf{R}^n$, and
control $\mathbf{u}(\cdot) \in \mathcal{U}$,  the system
of differential equations governing the dynamical system,
\begin{equation} \label{eq:nec1}
\left \{ \begin{array}{lcl} \mathbf{\dot{x}}(s)& = & \mathbf{f}(\mathbf{x}(s),\mathbf{u}(s)), \ t \leq s  \leq T,\\
\mathbf{{x}}(t) & = & \mathbf{x},\end{array}\right.
\end{equation}
has a unique solution, called a trajectory and denoted $s \rightarrow \mathbf{x}(s;t,\mathbf{x},\mathbf{u}(\cdot))$. Using the Gronwall inequality, the following estimate between trajectories can be obtained.\\
\begin{proposition}[{\rm Estimate between trajectories}]
\label{prop:trajestim} Let $t \in I$, $\mathbf{x_1},\mathbf{x_2} \in \mathbf{R}^n$, and $\mathbf{u}(\cdot) \in \mathcal{U}$. Then,
\begin{displaymath}
\forall s \in [t,T], \ \|\mathbf{x}(s;t,\mathbf{x_1},\mathbf{u}(\cdot))-\mathbf{x}(s;t,\mathbf{x_2},\mathbf{u}(\cdot))\| \leq \exp(K_{\mathbf{f}} (s-t))\|\mathbf{x_1}-\mathbf{x_2}\|.
\end{displaymath}
\end{proposition}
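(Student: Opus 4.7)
The plan is to apply the standard technique: convert the ODE into an integral equation, bound the difference of the two trajectories using the Lipschitz hypothesis (\ref{eq:lipf}) on $\mathbf{f}$, and then close the loop with the Gronwall inequality, as the proposition itself suggests.

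First, I would fix $t \in I$, $\mathbf{x_1},\mathbf{x_2} \in \mathbf{R}^n$, and $\mathbf{u}(\cdot) \in \mathcal{U}$, and, for brevity, write $\mathbf{x_i}(s) := \mathbf{x}(s;t,\mathbf{x_i},\mathbf{u}(\cdot))$ for $i=1,2$. By integrating (\ref{eq:nec1}) from $t$ to $s$, each trajectory satisfies
\begin{displaymath}
\mathbf{x_i}(s) = \mathbf{x_i} + \int_t^s \mathbf{f}(\mathbf{x_i}(\tau),\mathbf{u}(\tau))\,d\tau, \quad s \in [t,T].
\end{displaymath}
Subtracting the two identities, taking norms, and using the triangle inequality together with the pointwise Lipschitz property (\ref{eq:lipf}) of $\mathbf{f}(\cdot,\mathbf{u})$ gives
\begin{displaymath}
\|\mathbf{x_1}(s)-\mathbf{x_2}(s)\| \leq \|\mathbf{x_1}-\mathbf{x_2}\| + K_{\mathbf{f}} \int_t^s \|\mathbf{x_1}(\tau)-\mathbf{x_2}(\tau)\|\,d\tau.
\end{displaymath}

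Next, I would apply Gronwall's inequality to the nonnegative continuous function $\varphi(s) := \|\mathbf{x_1}(s)-\mathbf{x_2}(s)\|$ on $[t,T]$, with constant forcing term $\|\mathbf{x_1}-\mathbf{x_2}\|$ and kernel $K_{\mathbf{f}}$, to immediately obtain
\begin{displaymath}
\varphi(s) \leq \exp(K_{\mathbf{f}}(s-t)) \|\mathbf{x_1}-\mathbf{x_2}\|,
\end{displaymath}
which is exactly the claimed estimate.

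There is no real obstacle here: the only points requiring a brief justification are (i) the existence and uniqueness of each trajectory, which is already granted in the paragraph preceding the proposition, and (ii) the measurability and integrability of $\tau \mapsto \mathbf{f}(\mathbf{x_i}(\tau),\mathbf{u}(\tau))$, which follows from the continuity of $\mathbf{f}$, the measurability of $\mathbf{u}(\cdot)$, and the uniform bound (\ref{eq:fbounded}). The proof is thus essentially a two-line calculation plus one invocation of Gronwall's lemma.
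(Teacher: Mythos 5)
Your proof is correct and follows exactly the route the paper intends: the paper states that the estimate "can be obtained using the Gronwall inequality" and gives no further detail, and your integral-equation plus Lipschitz plus Gronwall argument is precisely that standard derivation. Nothing is missing.
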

%% Remark: we use the fact that the system is autonomous

The cost of a trajectory over $[t,T], \ t \in I,$ is given by a
$p$-dimensional vector function $\mathbf{J}(\cdot,\cdot,\cdot,\cdot): I
\times I \times \mathbf{R}^n \times \mathcal{U} \rightarrow
\mathbf{R}^p$,
\begin{equation} \label{eq:obj}
\mathbf{J}(t,T,\mathbf{x},\mathbf{u}(\cdot)) = \int_{t}^{T}
\mathbf{L}(\mathbf{x}(s;\mathbf{x},\mathbf{u}(\cdot)),\mathbf{u}(s)) \ \mathrm{d}s,
\end{equation}
where the $p$-dimensional vector function
$\mathbf{L}(\cdot,\cdot): \mathbf{R}^n \times U
\rightarrow \mathbf{R}^p$ is assumed to be continuous. For simplicity,  no terminal cost  is included in (\ref{eq:obj}). We assume that the function $\mathbf{L}$ is uniformly bounded, i.e., some
$M_\mathbf{L} \geq 0$ obeys
\begin{equation} \label{eq:Lbounded}
\forall \mathbf{x} \in \mathbf{R}^n, \ \forall \mathbf{u} \in U, \
\| \mathbf{L}(\mathbf{x},\mathbf{u}) \| \leq M_\mathbf{L},
\end{equation}
and that the function $\mathbf{L}(\cdot,\mathbf{u})$ satisfies a Lipschitz condition, i.e., some $K_\mathbf{L} \geq 0$ obeys
\begin{equation} \label{eq:LLipschitz}
\forall \mathbf{u} \in U, \ \forall
\mathbf{x_1},\mathbf{x_2} \in \mathbf{R}^n, \ \|
\mathbf{L}(\mathbf{x_1},\mathbf{u}) -
\mathbf{L}(\mathbf{x_2},\mathbf{u}) \| \leq K_\mathbf{L} \| \mathbf{x_1}
- \mathbf{x_2} \|.
\end{equation}
Using  (\ref{eq:LLipschitz}) and Proposition \ref{prop:trajestim}, the following estimate between
the cost of two trajectories can be obtained.\\
\begin{proposition}[{\rm Estimate between costs}]
\label{prop:costestim}
Let $t_1,t_2 \in I$,  $\mathbf{x_1},\mathbf{x_2} \in \mathbf{R}^n$, and $\mathbf{u}(\cdot) \in \mathcal{U}$. Then,
\begin{displaymath}
\|\mathbf{J}(t_1,T,\mathbf{x_1},\mathbf{u}(\cdot)) - \mathbf{J}(t_2,T,\mathbf{x_2},\mathbf{u}(\cdot)) \| \leq
\frac{K_{\mathbf{L}}}{K_{\mathbf{f}}}  \exp(K_{\mathbf{f}}T) \|\mathbf{x_1}-\mathbf{x_2}\|   + M_{\mathbf{L}} |t_1-t_2|.
\end{displaymath}
\end{proposition}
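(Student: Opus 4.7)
The plan is to split the difference $\mathbf{J}(t_1,T,\mathbf{x_1},\mathbf{u}(\cdot)) - \mathbf{J}(t_2,T,\mathbf{x_2},\mathbf{u}(\cdot))$ in order to isolate the effect of the time shift from that of the state shift, and then to invoke Proposition~\ref{prop:trajestim} together with the hypotheses on $\mathbf{L}$. Without loss of generality, assume $t_1 \leq t_2$ and write
\begin{align*}
\mathbf{J}(t_1,T,\mathbf{x_1},\mathbf{u}(\cdot)) - \mathbf{J}(t_2,T,\mathbf{x_2},\mathbf{u}(\cdot)) &= \int_{t_1}^{t_2} \mathbf{L}(\mathbf{x}(s;t_1,\mathbf{x_1},\mathbf{u}(\cdot)),\mathbf{u}(s))\, \mathrm{d}s \\
&\quad + \int_{t_2}^{T} \bigl[\mathbf{L}(\mathbf{x}(s;t_1,\mathbf{x_1},\mathbf{u}(\cdot)),\mathbf{u}(s)) - \mathbf{L}(\mathbf{x}(s;t_2,\mathbf{x_2},\mathbf{u}(\cdot)),\mathbf{u}(s))\bigr]\, \mathrm{d}s.
\end{align*}

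The uniform bound (\ref{eq:Lbounded}) dispatches the first integral immediately, contributing the $M_{\mathbf{L}}|t_1-t_2|$ term in the conclusion. For the second integral, the Lipschitz property (\ref{eq:LLipschitz}) gives an integrand pointwise bounded by $K_{\mathbf{L}}\,\|\mathbf{x}(s;t_1,\mathbf{x_1},\mathbf{u}(\cdot)) - \mathbf{x}(s;t_2,\mathbf{x_2},\mathbf{u}(\cdot))\|$. Using uniqueness of solutions of (\ref{eq:nec1}), the trajectory emanating from $\mathbf{x_1}$ at time $t_1$ is re-anchored on $[t_2,T]$ as the trajectory emanating from $\mathbf{x}(t_2;t_1,\mathbf{x_1},\mathbf{u}(\cdot))$ at time $t_2$, so Proposition~\ref{prop:trajestim} applies directly and yields
\begin{displaymath}
\|\mathbf{x}(s;t_1,\mathbf{x_1},\mathbf{u}(\cdot)) - \mathbf{x}(s;t_2,\mathbf{x_2},\mathbf{u}(\cdot))\| \leq \exp(K_{\mathbf{f}}(s-t_2))\,\|\mathbf{x}(t_2;t_1,\mathbf{x_1},\mathbf{u}(\cdot)) - \mathbf{x_2}\|.
\end{displaymath}
Integrating from $t_2$ to $T$ and majorizing $\exp(K_{\mathbf{f}}(T-t_2))$ by $\exp(K_{\mathbf{f}}T)$ produces the prefactor $\frac{K_{\mathbf{L}}}{K_{\mathbf{f}}}\exp(K_{\mathbf{f}}T)$ announced in the statement.

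The main obstacle is reducing the initial displacement $\|\mathbf{x}(t_2;t_1,\mathbf{x_1},\mathbf{u}(\cdot)) - \mathbf{x_2}\|$ to the state distance $\|\mathbf{x_1}-\mathbf{x_2}\|$ appearing on the right-hand side of the conclusion. In the special case $t_1 = t_2$ the identification is immediate and the stated bound is recovered exactly. For $t_1 < t_2$, the triangle inequality combined with (\ref{eq:fbounded}) gives $\|\mathbf{x}(t_2;t_1,\mathbf{x_1},\mathbf{u}(\cdot)) - \mathbf{x_1}\| \leq M_{\mathbf{f}}|t_1 - t_2|$, so the decomposition produces an additional $|t_1-t_2|$ contribution that must be absorbed into the $M_{\mathbf{L}}|t_1-t_2|$ term. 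This absorption step---carried out either through a mild widening of constants or under a convention under which $\mathbf{u}(\cdot)$ is translated consistently with the starting time so that the initial-state drift disappears---is the only delicate point; the remaining manipulations are direct consequences of Proposition~\ref{prop:trajestim} together with (\ref{eq:Lbounded}), (\ref{eq:LLipschitz}), and (\ref{eq:fbounded}).
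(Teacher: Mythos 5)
The paper offers no proof of this proposition --- it is simply asserted as a consequence of (\ref{eq:LLipschitz}) and Proposition \ref{prop:trajestim} --- so there is nothing to compare line by line; your decomposition at $s=t_2$ is the natural argument and is sound up to the final step. But the difficulty you flag at the end is real and your proof does not resolve it. Under the literal reading of $\mathbf{J}$ (trajectory anchored at $t_i$, control evaluated at absolute time $s$), your own estimates give
\[
\|\mathbf{J}(t_1,T,\mathbf{x_1},\mathbf{u}(\cdot)) - \mathbf{J}(t_2,T,\mathbf{x_2},\mathbf{u}(\cdot))\| \leq \frac{K_{\mathbf{L}}}{K_{\mathbf{f}}}\exp(K_{\mathbf{f}}T)\|\mathbf{x_1}-\mathbf{x_2}\| + \Bigl(M_{\mathbf{L}} + \frac{K_{\mathbf{L}}M_{\mathbf{f}}}{K_{\mathbf{f}}}\exp(K_{\mathbf{f}}T)\Bigr)|t_1-t_2|,
\]
because $\|\mathbf{x}(t_2;t_1,\mathbf{x_1},\mathbf{u}(\cdot))-\mathbf{x_2}\|\leq\|\mathbf{x_1}-\mathbf{x_2}\|+M_{\mathbf{f}}|t_1-t_2|$ and the drift term is then multiplied by the full exponential factor when you integrate over $[t_2,T]$. ``A mild widening of constants'' is therefore not an absorption into the stated bound but a change of the bound: the coefficient of $|t_1-t_2|$ genuinely exceeds $M_{\mathbf{L}}$ for generic $\mathbf{f}$ and $\mathbf{L}$ (one can arrange $\mathbf{L}$ to be small on $[t_1,t_2]$ and to vary at rate $K_{\mathbf{L}}$ transversally to the flow, so that the lag of size $\approx M_{\mathbf{f}}|t_1-t_2|$ contributes $\approx K_{\mathbf{L}}M_{\mathbf{f}}(T-t_2)|t_1-t_2|$, which can dominate $M_{\mathbf{L}}|t_1-t_2|$). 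So as a proof of the proposition with its stated constants, a gap remains.

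The second option you mention in passing is the one that actually closes it, and you should carry it out. Since the system is autonomous, reparametrize each cost by $\sigma=s-t_i$: both trajectories are then compared over $[0,T-t_2]$, starting from $\mathbf{x_1}$ and $\mathbf{x_2}$ at the same rescaled initial time with the same control profile, so Proposition \ref{prop:trajestim} applies with no initial drift; the tail $\int_{T-t_2}^{T-t_1}$ is bounded by $M_{\mathbf{L}}|t_1-t_2|$ via (\ref{eq:Lbounded}), and $\int_0^{T-t_2}K_{\mathbf{L}}\exp(K_{\mathbf{f}}\sigma)\,\mathrm{d}\sigma \leq \frac{K_{\mathbf{L}}}{K_{\mathbf{f}}}\exp(K_{\mathbf{f}}T)$ gives exactly the stated prefactor. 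This reading is consistent with the notation $\mathbf{x}(s;\mathbf{x},\mathbf{u}(\cdot))$ in (\ref{eq:obj}), which carries no initial time, and it is presumably what the author intends. Note finally that for everything downstream (Corollary \ref{prop:objspesti} and the Lipschitz properties of $Y$ and $V$), the larger constant from your literal reading would serve just as well; the distinction matters only for proving the proposition exactly as stated.
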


The objective space $Y(t,\mathbf{x})$  for (MOC) is defined as the set of
all possible costs~(\ref{eq:obj}):
\begin{displaymath}
\ Y(t,\mathbf{x}) = \bigg\{
\mathbf{J}(t,T,\mathbf{x},\mathbf{u}(\cdot)), {\mathbf{u}(\cdot) \in
\mathcal{U}} \bigg\}.
\end{displaymath}
From  $(\ref{eq:Lbounded})$, it follows that the set $Y(t,\mathbf{x})$ is bounded
(by $M_\mathbf{L}T$), however $Y(t,\mathbf{x})$ is not necessarily closed.
Using Proposition \ref{prop:costestim},  the following estimate between
objective spaces can be obtained.\\
\begin{corollary} [{\rm Estimate between objective spaces}] \label{prop:objspesti}
Let $t_1,t_2 \in I$ and  $\mathbf{x_1},\mathbf{x_2} \in \mathbf{R}^n$. Then,
$$Y(t_1,\mathbf{x_1}) \subset Y(t_2,\mathbf{x_2}) + \bigg(\frac{K_{\mathbf{L}}}{K_{\mathbf{f}}} \exp(K_{\mathbf{f}}T) \|\mathbf{x_1}-\mathbf{x_2}\|  +
M_{\mathbf{L}} |t_1-t_2|\bigg) \mathbf{B},$$
where $\mathbf{B}(\mathbf{x},l)$ is the closed ball
centered at $\mathbf{x}$ with radius $l$ and $\mathbf{B} = \mathbf{B}(\mathbf{0},1)$.
\end{corollary}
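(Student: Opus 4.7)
The plan is to prove the set inclusion pointwise. Fix an arbitrary element $\mathbf{y} \in Y(t_1,\mathbf{x_1})$. By definition of the objective space, there is some control $\mathbf{u}(\cdot) \in \mathcal{U}$ such that $\mathbf{y} = \mathbf{J}(t_1,T,\mathbf{x_1},\mathbf{u}(\cdot))$. Since a control is defined on all of $I$, the very same $\mathbf{u}(\cdot)$ is admissible for the initial data $(t_2,\mathbf{x_2})$, so the vector $\mathbf{y'} := \mathbf{J}(t_2,T,\mathbf{x_2},\mathbf{u}(\cdot))$ belongs to $Y(t_2,\mathbf{x_2})$.

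Next, I would apply Proposition \ref{prop:costestim} to these two costs with the common control $\mathbf{u}(\cdot)$. This immediately gives
\begin{displaymath}
\|\mathbf{y}-\mathbf{y'}\| \leq \frac{K_{\mathbf{L}}}{K_{\mathbf{f}}}\exp(K_{\mathbf{f}}T)\|\mathbf{x_1}-\mathbf{x_2}\| + M_{\mathbf{L}}|t_1-t_2| =: r(t_1,t_2,\mathbf{x_1},\mathbf{x_2}).
\end{displaymath}
Writing $\mathbf{y} = \mathbf{y'} + (\mathbf{y}-\mathbf{y'})$ therefore exhibits $\mathbf{y}$ as a point of $Y(t_2,\mathbf{x_2}) + r\mathbf{B}$, which is exactly the required inclusion. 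Since $\mathbf{y}$ was arbitrary, this establishes the corollary.

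There is essentially no obstacle here: the corollary is a direct set-theoretic reformulation of Proposition \ref{prop:costestim}. The only mild point to check is that a single control $\mathbf{u}(\cdot) \in \mathcal{U}$ can serve both initial data, which is immediate from the convention that controls are defined on the whole interval $I$ independently of the starting time.
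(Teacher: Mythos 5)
Your proof is correct and is exactly the argument the paper intends: the corollary is stated as an immediate consequence of Proposition \ref{prop:costestim}, obtained by using the same control for both initial data and reading off the resulting distance bound as a set inclusion. Nothing is missing.
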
\\ \\
In \S \ref{ss:lip}, we will need the norm $\|\cdot\|$ in $\mathbf{R}^p$ to be Euclidian. Let $\langle \cdot,\cdot \rangle$ be the associated inner product.\\

The set-valued return function $V(\cdot,\cdot): I \times \mathbf{R}^n  \rightarrow
2^{\mathbf{R}^p}$ for (MOC) is defined as the set-valued map which associates with each time $t \in I$ and initial
state $\mathbf{x} \in \mathbf{R}^n$ the set of minimal elements of the objective space $Y(t,\mathbf{x})$
with respect to the ordering cone $P$, where the definition of a minimal element is postponed to \S \ref{ss:opti}:
\begin{equation} \label{eq:setvalued}
V(t,\mathbf{x}) = \mathcal{E}(\mathrm{cl}(Y(t,\mathbf{x})),P).
\end{equation}
The closure in (\ref{eq:setvalued}) is used to guarantee the existence of minimal elements (Proposition 3.5,
\cite{Guigue09}). Hence, $\forall t \in I, \ \forall \mathbf{x} \in \mathbf{R}^n, \ V(t,\mathbf{x}) \neq \emptyset$. \\

\begin{remark} \label{rem:valuefunction1dof} When $p=1$ and $P = \mathbf{R}_+$, \emph{(\ref{eq:setvalued})}
takes the form
$$V(t,\mathbf{x})  =  \bigg\{ \inf_{\mathbf{u}(\cdot) \in \mathcal{U}}  \int_{t}^{T}
L(\mathbf{x}(s;t,\mathbf{x},\mathbf{u}(\cdot)),\mathbf{u}(s)) \ \mathrm{d}s   \bigg\}.$$
Hence, $V(t,\mathbf{x}) = \{v(t,\mathbf{x})\},$ where $v(\cdot,\cdot)$ is the value function
for single objective optimal control problems \emph{\cite{OptConTh8_book,OptConTh5_book}}.
\end{remark}

\section{Mathematical preliminaries} \label{s:mathprel}

In this section, we recall some useful definitions and discuss general set-valued maps with values
in $\mathbf{R}^p$. Let $X$ be a normed linear space, $F$ be a set-valued map from $X$ to
$\mathbf{R}^p$, $\mathcal{K} = \{S \subset \mathbf{R}^p, \ S \neq \emptyset, \ S \ \mbox{compact} \}$ and
$\mathcal{M} = \{S \subset \mathbf{R}^p, \ S \neq \emptyset, \ S \ \mbox{bounded} \}$.\\

First, we provide the definition of the Hausdorff distance  between $M_1$ and $M_2$, where $M_1,M_2 \in \mathcal{M}$ \cite[p. 365]{SVA_book}.\\

\begin{definition}
 Given $M_1,M_2 \in \mathcal{M}$, the \emph{Hausdorff distance}  between $M_1$ and $M_2$ is
\begin{displaymath}
\mathcal{H}(M_1,M_2)= \max \{ \sup_{\mathbf{m}_1 \in M_1} d(\mathbf{m}_1,M_2)
,\sup_{\mathbf{m}_2 \in M_2} d(\mathbf{m}_2,M_1) \},
\end{displaymath}
where, for $S \in \mathcal{M}$,
\begin{displaymath}
d(\mathbf{y},S) = \inf_{\mathbf{m} \in S} \|\mathbf{y}-\mathbf{m}\|.
\end{displaymath}
\end{definition}

Next, we review the concept of  recession cones~\cite[p.~8]{NonlinearMOO3_book} and
the fundamental definitions of contingent cones \cite[p.~121]{SVA_book} and proximal normal cones \cite[p.~170]{OptConTh5_book}.\\

\begin{definition}[Recession cone]
For a nonempty subset  $S$ of $\mathbf{R}^p$, the extended recession
cone $S^+$ is defined by:
$$
S^+ = \{\mathbf{y} \in \mathbf{R}^p \ \mathrm{s.t.} \ \exists h_k \rightarrow 0^+, \ \exists \mathbf{y}_k \in S, \  h_k \mathbf{y}_k \rightarrow \mathbf{y} \}.
$$
\end{definition}

\begin{definition}[Contingent cone]
Let $S$ be a nonempty subset of some normed linear space $Z$ and let $\mathbf{z} \in S$. Then, the contingent cone
$T_S(\mathbf{z})$ to $S$ at $\mathbf{z}$ is defined by:
$$
T_S(\mathbf{z}) = \{\mathbf{v} \in Z \ \mathrm{s.t.} \ \exists h_k \rightarrow 0^+, \ \exists \mathbf{v}_k \rightarrow \mathbf{v}, \ \mathbf{z} + h_k \mathbf{v}_k \in S \}.
$$
\end{definition}

\begin{definition} [Proximal normal cone]
Let $S$ be a nonempty subset of some  Hilbert space $Z$ with inner product $\langle \cdot,\cdot \rangle$ and let $\mathbf{z} \in S$. Then, the proximal
normal cone $N_S(\mathbf{z})$ to $S$ at $\mathbf{z}$ is defined by:
$$
N_S(\mathbf{z}) = \{\mathbf{v} \in Z: \ \exists  M > 0 \ \mathrm{s.t.} \ \langle \mathbf{v}, \mathbf{\overline{z}} - \mathbf{z}\rangle \leq M \|\mathbf{\overline{z}} - \mathbf{z}\| \ \mathrm{for} \ \mathrm{all} \ \mathbf{\overline{z}} \in S \}.
$$
\end{definition}
Note that we use the notation $N_S$ for the proximal normal cone instead of the traditional notation $N^P_S$ to avoid any possible confusion with the ordering cone $P$.\\

Using contingent cones and proximal normal cones, we can respectively introduce the concepts of contingent derivative \cite[p.~181]{SVA_book} and coderivative \cite{Mordukhovich93_moo} for set-valued maps. \\

\begin{definition} [Contingent derivative of set-valued maps]
The contingent derivative  of $F$ at $(\mathbf{x},\mathbf{y})$
is the set-valued map from $X$ to $\mathbf{R}^p$ defined by
$$ \mathrm{Graph}(DF(\mathbf{x},\mathbf{y})) = T_{\mathrm{Graph}(F)}(\mathbf{x},\mathbf{y}),$$
where $T$ is the contingent cone of $\mathrm{Graph}(F)$ at $(\mathbf{x},\mathbf{y})$, as defined above.
\end{definition} \\ \\
Note that the contingent derivative $DF(\mathbf{x},\mathbf{y})$ is a closed-valued map.\\

\begin{definition} [Coderivative of set-valued maps]
The coderivative $DF^*(\mathbf{x},\mathbf{y})$ of $F$ at $(\mathbf{x},\mathbf{y})$
is the set-valued map from $\mathbf{R}^p$ to $X$ defined by:
$$DF^*(\mathbf{x},\mathbf{y})(\mathbf{w^*}) = \{\mathbf{v^*} \in X, \ (\mathbf{v^*},-\mathbf{w^*}) \in N_{\mathrm{Graph}(F)}(\mathbf{x},\mathbf{y})\},$$
where $N_{\mathrm{Graph}(F)}$ is the proximal normal cone of $\mathrm{Graph}(F)$ at $(\mathbf{x},\mathbf{y})$, as defined above.
\end{definition} \\

Finally, we will also need the notion of Lipschitz~\cite[p.~41]{SVA_book} and outersemicontinuous~\cite[p.~152]{SVA3_book} set-valued maps.\\

\begin{definition} [Lipschitz set-valued maps]
The set-valued map $F$ is said to be Lipschitz around $\mathbf{x} \in X$  if there exists a positive constant $l$ and a neighborhood $\mathcal{O}$ of $\mathbf{x}$ such that $$\forall \mathbf{x_1},\mathbf{x_2} \in \mathcal{O}, \ F(\mathbf{x_1}) \subset F(\mathbf{x_2}) + l\|\mathbf{x_1}-\mathbf{x_2}\|\mathbf{B}.$$
\end{definition}
%% Remark: do we really need Lipschitz around x? What is the definition of Lipschitz at x? It is not clear in SVA
%% book. Is the same? can it be used instead?
For the definition of outersemicontinuity, we assume that $X$ is finite dimensional. For our purposes, we will have
$X = \mathbf{R}^{n+1}$.\\

\begin{definition} [Outer semicontinuous set-valued maps]
The set-valued map $F$ is said to be outer semicontinuous
at $\mathbf{x}$ if
$$ F(\mathbf{x}) \supset \limsup_{\mathbf{x'}\rightarrow \mathbf{x}} F(\mathbf{x'}) := \{\mathbf{y} \ \mathrm{s.t.} \ \exists \mathbf{x}_k
\rightarrow \mathbf{x}, \ \exists \mathbf{y}_k
\rightarrow \mathbf{y} \ \mathrm{with} \ \mathbf{y}_k \in F(\mathbf{x}_k)\}.$$
\end{definition}
%% Question: how is the definition changed at the boundary?
%% That seems to be subtle. For example, take our function of interest, i.e.,
%% w(t,x) and assume t = 0, then the definition would be changed from
%% t->0 to t->0^+ (so we cannot take, for example, for a sequence tn->0, tn = 0
%% As a result, whereas when t > 0, we would have
%% $$ \mathrm{cl}(W(\mathbf{t,x})) \subset \limsup_{t'\rightarrow t,\mathbf{x'}\rightarrow \mathbf{x}} F(t',\mathbf{x}').$$
%% (just by taking constant sequences for tn and xn). This does not hold anymore for the case t = 0.

\section{Multiobjective Optimization} \label{s:moogeneral}

In this section, we first discuss the concept of optimality in multiobjective
optimization and introduce the notion of generalized Pareto optimal elements or
minimal elements for a nonempty subset $S$ of $\mathbf{R}^p$. We next provide several properties related to these elements. We finally conclude by studying the Lipschitzian property of the map $E: 2^{\mathbf{R}^p} \rightarrow 2^{\mathbf{R}^p}$ which associates to each nonempty subset $S$ of $\mathbf{R}^p$ the set of minimal elements of $S$.

\subsection{Optimality in multiobjective optimization} \label{ss:opti}

For an optimization problem with a $p$-dimensional vector-valued objective function, the definition
of an optimal solution requires the comparison of any two objective vectors $\mathbf{y_1},\mathbf{y_2}$ in the objective space, which is the set of all possible values that can
be taken by the vector-valued objective function. This comparison is provided by a binary
relation expressing the preferences of the decision maker. In this paper,
we consider the  binary relation defined in terms of an ordering cone
$P \subset \mathbf{R}^p$, which is defined as a nonempty pointed
convex cone containing the origin \cite{Yu74_moo}. We will additionally assume that $P$ is closed with a nonempty interior, i.e., $\mathrm{int}(P) \neq \emptyset$.\\
\begin{definition} \label{def:binaryRelation}
Let $\mathbf{y_1},\mathbf{y_2} \in \mathbf{R}^p$. Then, $\mathbf{y_1} \preceq \mathbf{y_2}$  if and only if
$\mathbf{y_2} \in \mathbf{y_1} + P$.
\end{definition}\\

The binary relation in Definition \ref{def:binaryRelation}  yields  the definition
of generalized Pareto optimality.\\
\begin{definition} \label{def:minele}
Let $S$ be a nonempty subset of $\mathbf{R}^p$. An element $\mathbf{y_1} \in S$ is said to
be a generalized Pareto optimal element or a minimal element of $S$ if and only
if there is no $\mathbf{y_2} \in S \ (\mathbf{y_2} \neq \mathbf{y_1})$ such that
$\mathbf{y_1} \in \mathbf{y_2} + P,$ or equivalently, if and only if  there is no $\mathbf{y_2}$ such that
$\mathbf{y_1 }\in \mathbf{y_2} + P\backslash\{\mathbf{0}\}$. The set of minimal elements of
$S$ is called the minimal element set and is denoted by $\mathcal{E}(S,P)$.
\end{definition}\\ \\
Definition \ref{def:minele} can be rewritten as follows:$$ \mathbf{y} \in \mathcal{E}(S,P) \Leftrightarrow  (\mathbf{y} -P) \cap S = \{\mathbf{y} \}.$$\\
It is possible to derive a necessary condition for generalized Pareto optimality in terms of
contingent cones.\\

\begin{lemma} \label{lem:final}
Let $S$ be a nonempty subset of $\mathbf{R}^p$ and $\mathbf{y} \in \mathcal{E}(S,P)$. Then,
\begin{equation} \label{eq:nec_condition}
T_{S+P}(\mathbf{y}) \cap -\mathrm{int}(P) = \emptyset.
\end{equation}
\end{lemma}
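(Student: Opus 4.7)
The natural strategy is proof by contradiction, showing that any $\mathbf{v} \in T_{S+P}(\mathbf{y}) \cap (-\mathrm{int}(P))$ forces the existence of an element of $S$ strictly dominating $\mathbf{y}$ with respect to $P$, contradicting $\mathbf{y} \in \mathcal{E}(S,P)$.

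Concretely, I would proceed as follows. Suppose for contradiction that some $\mathbf{v}$ lies in $T_{S+P}(\mathbf{y}) \cap (-\mathrm{int}(P))$. Unpacking the contingent cone, there exist $h_k \to 0^+$ and $\mathbf{v}_k \to \mathbf{v}$ with $\mathbf{y} + h_k \mathbf{v}_k \in S + P$, so I can write $\mathbf{y} + h_k \mathbf{v}_k = \mathbf{s}_k + \mathbf{p}_k$ with $\mathbf{s}_k \in S$ and $\mathbf{p}_k \in P$. Rearranging gives
\begin{equation*}
\mathbf{y} - \mathbf{s}_k = -h_k \mathbf{v}_k + \mathbf{p}_k.
\end{equation*}
The goal is then to show that, for large $k$, the right-hand side belongs to $P \setminus \{\mathbf{0}\}$, so that $\mathbf{y} \in \mathbf{s}_k + P\setminus\{\mathbf{0}\}$, which contradicts minimality.

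For this I would use two standard facts about the ordering cone. First, since $-\mathrm{int}(P)$ is open and $\mathbf{v}_k \to \mathbf{v} \in -\mathrm{int}(P)$, we have $\mathbf{v}_k \in -\mathrm{int}(P)$ for all $k$ sufficiently large; scaling by $h_k > 0$ (and using that $\mathrm{int}(P)$ is invariant under positive scaling since $P$ is a convex cone) gives $-h_k \mathbf{v}_k \in \mathrm{int}(P)$. Second, $P$ is a convex cone containing $\mathbf{0}$, so $P + P \subset P$, and from this together with openness of $\mathrm{int}(P)$ one deduces $P + \mathrm{int}(P) \subset \mathrm{int}(P)$. Applying this with $\mathbf{p}_k \in P$ and $-h_k\mathbf{v}_k \in \mathrm{int}(P)$ yields $-h_k \mathbf{v}_k + \mathbf{p}_k \in \mathrm{int}(P)$.

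Finally, I must verify that this element is nonzero. This is exactly where pointedness of $P$ is used: if $\mathbf{0} \in \mathrm{int}(P)$ then some ball around $\mathbf{0}$ would lie in $P$, and hence also in $-P$, so $P \cap (-P)$ would be nontrivial, contradicting pointedness. Therefore $\mathrm{int}(P) \subset P\setminus\{\mathbf{0}\}$, giving $\mathbf{y} - \mathbf{s}_k \in P\setminus\{\mathbf{0}\}$, i.e., $\mathbf{y} \in \mathbf{s}_k + P\setminus\{\mathbf{0}\}$, contradicting $\mathbf{y} \in \mathcal{E}(S,P)$. The main obstacle is really just bookkeeping with the cone arithmetic, in particular ensuring the interior is preserved when adding an arbitrary element of $P$; once the elementary lemma $P + \mathrm{int}(P) \subset \mathrm{int}(P)$ is in hand the argument is immediate.
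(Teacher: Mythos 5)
Your proof is correct and follows essentially the same route as the paper: both argue by contradiction, use openness of $\mathrm{int}(P)$ to get $-h_k\mathbf{v}_k \in \mathrm{int}(P)$ for large $k$, and then use $P + \mathrm{int}(P) \subset \mathrm{int}(P)$ together with $\mathbf{0} \notin \mathrm{int}(P)$ (pointedness) to exhibit an element of $S$ dominating $\mathbf{y}$, contradicting $\mathbf{y} \in \mathcal{E}(S,P)$. The only difference is that you spell out explicitly the decomposition $\mathbf{y}+h_k\mathbf{v}_k = \mathbf{s}_k+\mathbf{p}_k$ and the verification that $\mathbf{0}\notin\mathrm{int}(P)$, which the paper leaves implicit.
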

\begin{proof}
Assume that there exists $\mathbf{d} \in \mathrm{int}(P)$ such that $-\mathbf{d} \in T_{S+P}(\mathbf{y})$. Then, there  exist sequences $h_k \rightarrow 0^+ \ \mathrm{and} \  \mathbf{d}_k \rightarrow -\mathbf{d} \ \mathrm{such \ that} \ \forall k, \ \mathbf{y} + h_k \mathbf{d}_k \in S+P.$
For large enough $k$, $-\mathbf{d}_k \in \mathrm{int}(P)$, therefore $-h_k \mathbf{d}_k \in \mathrm{int}(P)$. Hence, $\mathbf{y} \in S + P + \mathrm{int}(P) \subset S + \mathrm{int}(P)$, which contradicts the fact that $\mathbf{y} \in \mathcal{E}(S,P)$.
\end{proof}\\

Condition (\ref{eq:nec_condition}) yields the definition of the  following stronger optimality notion \cite[pp.~108-109]{SVA2_book}. \\

\begin{definition}[Generalized proper Pareto optimality] Let $S$ be a nonempty subset of $\mathbf{R}^p$.
An element $\mathbf{y} \in S$ is said to be a generalized properly Pareto optimal element or a properly minimal element of $S$,
if and only if $\mathbf{y}$ is a minimal element of  $S$ and the zero element is a minimal element of the contingent cone
$T_{S+P}(\mathbf{y})$, i.e., \begin{equation} \label{eq:propminelem} T_{S+P}(\mathbf{y}) \cap -P = \{\mathbf{0}\}.\end{equation} The set of properly minimal elements of $S$ is called the properly minimal element set and is denoted  $\mathcal{PE}(S,P)\subset \mathcal{E}(S,P).$\\
\end{definition}

\subsection{Some properties of the minimal element set}

A critical role in this paper is played by  the external stability or domination property \cite[pp.~59-66]{Tanino88_moo}.\\

\begin{definition}[External stability]
A nonempty subset $S$ of $\mathbf{R}^p$ is said to be externally stable if and only
if $$ S \subset \mathcal{E}(S,P) + P.$$
\end{definition}
An immediate consequence of the external stability property is that $S + P  = \mathcal{E}(S,P) + P$. A sufficient condition for a nonempty closed set $S$ to be externally stable is given in Proposition~\ref{prop:exter}. Note that this condition is also sufficient to guarantee the existence of minimal elements.\\

\begin{proposition}[Theorem 3.2.10, \emph{\emph{\cite[p.62]{Tanino88_moo}}}] \label{prop:exter}
Let $S$ be a nonempty closed subset of $\mathbf{R}^p$. If $S$ is $P$-bounded \emph{\cite[p.~52]{Tanino88_moo}}, i.e., $S^+ \cap - P = \{\mathbf{0}\},$ then $S$ is externally stable.\\
\end{proposition}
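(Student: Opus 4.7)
My plan is to prove the stronger pointwise statement: for every $\mathbf{y}_0 \in S$ there exists $\mathbf{y}^* \in \mathcal{E}(S,P)$ with $\mathbf{y}^* \preceq \mathbf{y}_0$, which is exactly the inclusion $S \subset \mathcal{E}(S,P) + P$. Fix $\mathbf{y}_0 \in S$ and introduce the order-downset
$$S_0 := S \cap (\mathbf{y}_0 - P),$$
which is closed (intersection of two closed sets, since $P$ is closed by assumption) and nonempty (it contains $\mathbf{y}_0$). The argument then proceeds in three steps: (i) use $P$-boundedness to upgrade $S_0$ from closed to compact; (ii) apply Zorn's lemma to $(S_0, \preceq)$ to obtain a $\preceq$-minimal element $\mathbf{y}^* \in S_0$; (iii) check that $\mathbf{y}^*$ is in fact a minimal element of all of $S$, not merely of $S_0$.

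For step (i), I would argue by contradiction: if $S_0$ were unbounded, pick $\mathbf{y}_k \in S_0$ with $\|\mathbf{y}_k\| \to \infty$, set $h_k = 1/\|\mathbf{y}_k\| \to 0^+$, and pass to a subsequence along which $h_k \mathbf{y}_k \to \mathbf{v}^*$ with $\|\mathbf{v}^*\| = 1$. By definition of the recession cone, $\mathbf{v}^* \in S^+$. Since $\mathbf{y}_0 - \mathbf{y}_k \in P$ and $P$ is a closed cone, the vectors $h_k(\mathbf{y}_0 - \mathbf{y}_k) \in P$ converge to $-\mathbf{v}^* \in P$. This places $\mathbf{v}^*$ in $S^+ \cap (-P) \setminus \{\mathbf{0}\}$, contradicting the $P$-boundedness hypothesis. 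So $S_0$ is compact.

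For step (ii), given any chain $C \subset (S_0, \preceq)$, the family $\{(\mathbf{y} - P) \cap S_0 : \mathbf{y} \in C\}$ consists of closed subsets of the compact set $S_0$ and satisfies the finite intersection property: the $\preceq$-smallest of any finite subfamily of $\mathbf{y}$'s (which exists because $C$ is totally ordered) lies in every member of the corresponding finite subfamily. Compactness of $S_0$ then furnishes a common point $\mathbf{y}^* \in S_0$ serving as a $\preceq$-lower bound for $C$, so Zorn's lemma delivers a $\preceq$-minimal element of $S_0$, which I continue to call $\mathbf{y}^*$. For step (iii), if some $\mathbf{y}' \in S$ with $\mathbf{y}' \neq \mathbf{y}^*$ satisfied $\mathbf{y}^* - \mathbf{y}' \in P$, then
$$\mathbf{y}_0 - \mathbf{y}' = (\mathbf{y}_0 - \mathbf{y}^*) + (\mathbf{y}^* - \mathbf{y}') \in P + P \subset P,$$
placing $\mathbf{y}' \in S_0$ and contradicting the $\preceq$-minimality of $\mathbf{y}^*$ in $S_0$. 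Hence $\mathbf{y}^* \in \mathcal{E}(S,P)$ and $\mathbf{y}_0 = \mathbf{y}^* + (\mathbf{y}_0 - \mathbf{y}^*) \in \mathcal{E}(S,P) + P$.

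The main obstacle is step (i): it is the only place where the $P$-boundedness hypothesis enters and where the real geometric content of the proposition sits, translating the asymptotic algebraic condition $S^+ \cap (-P) = \{\mathbf{0}\}$ into the concrete compactness of every order-downset of the form $S \cap (\mathbf{y}_0 - P)$. Once compactness is in hand, steps (ii) and (iii) are essentially formal consequences of the closedness and pointed convex cone structure of $P$.
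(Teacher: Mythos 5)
Your proof is correct. Note that the paper itself gives no proof of this proposition --- it is imported verbatim as Theorem 3.2.10 of the cited reference \cite[p.62]{Tanino88_moo} --- so there is nothing internal to compare against; your argument (compactness of the section $S\cap(\mathbf{y}_0-P)$ via the recession-cone hypothesis, Zorn's lemma on the chain-complete order, and the transitivity step lifting minimality from $S_0$ to $S$) is essentially the standard textbook proof of that theorem, and all three steps check out under the paper's standing assumptions that $P$ is a pointed closed convex cone.
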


\begin{corollary} \label{cor:extstabcompactset}
Let $K \in \mathcal{K}$. Then $K$ is externally stable.
\end{corollary}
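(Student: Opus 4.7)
The plan is to invoke Proposition \ref{prop:exter} directly. Since every $K \in \mathcal{K}$ is by definition nonempty and compact, it is in particular nonempty and closed, so the only remaining hypothesis to verify is $P$-boundedness, namely $K^+ \cap -P = \{\mathbf{0}\}$.

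To check this, I would first show that compactness of $K$ forces $K^+ = \{\mathbf{0}\}$. Take any $\mathbf{y} \in K^+$ and let $h_k \to 0^+$ and $\mathbf{y}_k \in K$ be such that $h_k \mathbf{y}_k \to \mathbf{y}$. Because $K$ is bounded, there exists $M > 0$ with $\|\mathbf{y}_k\| \leq M$ for all $k$, hence
\begin{displaymath}
\|h_k \mathbf{y}_k\| \leq h_k M \longrightarrow 0,
\end{displaymath}
which forces $\mathbf{y} = \mathbf{0}$. Thus $K^+ \subseteq \{\mathbf{0}\}$, and since $P$ contains the origin we get $K^+ \cap -P = \{\mathbf{0}\}$.

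With both hypotheses of Proposition \ref{prop:exter} verified, I conclude that $K$ is externally stable. The only step that required any thought was the recession cone computation, and that is routine given the definition; there is no real obstacle here, the corollary is essentially a one-line specialization of Proposition \ref{prop:exter} to the compact case.
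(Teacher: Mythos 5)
Your proof is correct and follows the same route as the paper: apply Proposition \ref{prop:exter} after noting that compactness forces $K^+ = \{\mathbf{0}\}$. The only difference is that the paper cites this recession-cone fact (Lemma 3.2.1 of the Sawaragi--Nakayama--Tanino reference) while you prove it directly, which is a routine and correct computation.
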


\begin{proof}
$K$ is a compact set, hence $K^+ = \{\mathbf{0}\}$ (Lemma 3.2.1, \cite[p.52]{Tanino88_moo}).
\end{proof} \\

Having defined the notion of minimal elements, we can now introduce the concept of generalized contingent
epiderivative for set-valued maps \cite{Chen98_moo,Jahn97_moo}, which derives from the concept of contingent derivative
for set-valued maps.\\

\begin{definition} [Generalized contingent epiderivative of set-valued maps]
The generalized contingent epiderivative  of $F$ at $(\mathbf{x},\mathbf{y}) \in \mathrm{Graph}(F)$ is the set-valued map from $X$ to $\mathbf{R}^p$ defined by
$$ \forall \mathbf{v} \in  X, \ D_\uparrow F((\mathbf{x},\mathbf{y});\mathbf{v}) = \mathcal{E}(DF_\uparrow((\mathbf{x},\mathbf{y});\mathbf{v}) ,P),
 $$
where $F_\uparrow$ is the set-valued map from $X$ to $\mathbf{R}^p$ defined by $\forall \mathbf{x} \in  X, \ F_\uparrow(\mathbf{x})
= F(\mathbf{x}) + P$. When $DF_\uparrow((\mathbf{x},\mathbf{y});\mathbf{v})$ is empty, we set $D_\uparrow F((\mathbf{x},\mathbf{y});\mathbf{v}) = \emptyset.$
\end{definition} \\  \\
It is possible that the set $DF_\uparrow((\mathbf{x},\mathbf{y});\mathbf{v})$ does not have any minimal
element. In such a case, $D_\uparrow F((\mathbf{x},\mathbf{y});\mathbf{v})$ is just
the empty set.\\

We conclude this section by a lemma that will be used in \S \ref{s:moo}.\\

\begin{lemma}[\cite{Guigue09}] \label{lem:dp4}
Let $K_1, K_2 \in \mathcal{K}$ satisfying $K_1 \subset K_2$ and $K_2 \subset K_1 + P$. Then
$\mathcal{E}(K_1,P) = \mathcal{E}(K_2,P)$.
\end{lemma}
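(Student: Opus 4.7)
The plan is to establish the two inclusions $\mathcal{E}(K_1,P) \subset \mathcal{E}(K_2,P)$ and $\mathcal{E}(K_2,P) \subset \mathcal{E}(K_1,P)$ separately, exploiting the equivalent characterization $\mathbf{y} \in \mathcal{E}(S,P) \Leftrightarrow (\mathbf{y}-P) \cap S = \{\mathbf{y}\}$, which just says that no element of $S$ other than $\mathbf{y}$ lies ``below'' $\mathbf{y}$ in the cone-ordering.

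First I would handle the easier inclusion $\mathcal{E}(K_2,P) \subset \mathcal{E}(K_1,P)$. Pick $\mathbf{y} \in \mathcal{E}(K_2,P)$ and use $K_2 \subset K_1 + P$ to write $\mathbf{y} = \mathbf{y_1} + \mathbf{p}$ with $\mathbf{y_1} \in K_1$ and $\mathbf{p} \in P$. Since $K_1 \subset K_2$, the element $\mathbf{y_1}$ also sits in $K_2$ and satisfies $\mathbf{y} \in \mathbf{y_1} + P$, so minimality of $\mathbf{y}$ in $K_2$ forces $\mathbf{y_1} = \mathbf{y}$, hence $\mathbf{y} \in K_1$. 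The fact that $\mathbf{y}$ is minimal in $K_1$ then follows immediately: any $\mathbf{y_3} \in K_1$ with $\mathbf{y_3} \neq \mathbf{y}$ and $\mathbf{y} \in \mathbf{y_3} + P$ would, via $K_1 \subset K_2$, contradict $\mathbf{y} \in \mathcal{E}(K_2,P)$.

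For the converse $\mathcal{E}(K_1,P) \subset \mathcal{E}(K_2,P)$, take $\mathbf{y} \in \mathcal{E}(K_1,P)$, so $\mathbf{y} \in K_1 \subset K_2$. Suppose for contradiction that some $\mathbf{y_2} \in K_2$ with $\mathbf{y_2} \neq \mathbf{y}$ satisfies $\mathbf{y} \in \mathbf{y_2} + P$, i.e., $\mathbf{y} = \mathbf{y_2} + \mathbf{p'}$ for some nonzero $\mathbf{p'} \in P$. Apply the hypothesis $K_2 \subset K_1 + P$ to decompose $\mathbf{y_2} = \mathbf{y_3} + \mathbf{p}$ with $\mathbf{y_3} \in K_1$ and $\mathbf{p} \in P$, giving $\mathbf{y} = \mathbf{y_3} + (\mathbf{p}+\mathbf{p'})$ with $\mathbf{p}+\mathbf{p'} \in P$ by convexity of the cone.

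The main obstacle, and the only subtle point, is showing $\mathbf{y_3} \neq \mathbf{y}$, so that $\mathbf{y_3}$ genuinely witnesses the non-minimality of $\mathbf{y}$ in $K_1$. This is where the \emph{pointedness} of $P$ is essential: if $\mathbf{y_3} = \mathbf{y}$ then $\mathbf{p} + \mathbf{p'} = \mathbf{0}$, so $\mathbf{p'} = -\mathbf{p} \in P \cap (-P) = \{\mathbf{0}\}$, contradicting $\mathbf{p'} \neq \mathbf{0}$. Thus $\mathbf{y_3} \in K_1 \setminus \{\mathbf{y}\}$ with $\mathbf{y} \in \mathbf{y_3} + P$ contradicts $\mathbf{y} \in \mathcal{E}(K_1,P)$, completing the proof. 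Note that the argument uses only the set inclusions and the pointed convex cone structure of $P$; compactness of $K_1$ and $K_2$ is not invoked.
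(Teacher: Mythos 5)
Your proof is correct. Note that the paper itself gives no argument for this lemma --- it is quoted from the reference \cite{Guigue09} --- so there is no in-paper proof to compare against; your two-inclusion argument, using the characterization $\mathbf{y} \in \mathcal{E}(S,P) \Leftrightarrow (\mathbf{y}-P)\cap S = \{\mathbf{y}\}$ together with convexity ($P+P\subset P$) and pointedness ($P\cap -P=\{\mathbf{0}\}$) of the cone, is exactly the standard elementary proof one expects for this statement, and your observation that compactness of $K_1,K_2$ plays no role (only the inclusions $K_1\subset K_2\subset K_1+P$ and the cone structure) is accurate.
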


\subsection{Lipschitzian properties} \label{ss:lip}

In this section, we analyze the map $E(\cdot): 2^{\mathbf{R}^p} \rightarrow 2^{\mathbf{R}^p}$ which
associates to each nonempty subset $S$ of $\mathbf{R}^p$ its minimal element set $\mathcal{E}(S,P)$: $$\forall S \in 2^{\mathbf{R}^p}, \ E(S) = \mathcal{E}(S,P).$$ More precisely, given $K_1, K_2 \in \mathcal{K}$, we show that, under suitable conditions, there exists $M \geq 0$ such that
\begin{equation} \label{eq:lip1}
\mathcal{H}(E(K_1),E(K_2))  \leq M \mathcal{H}(K_1,K_2).
\end{equation}

First, we record some elementary consequences of our standing assumption that $P$ is a pointed closed convex cone
with nonempty interior.\\

\begin{lemma} \label{lem:bounding3}
Let $l>0$, and define the set
$P_l = \{\mathbf{x} \in \mathbf{R}^p, \ \mathbf{B}(\mathbf{x},l) \subset P \}$. Then $P_l$ satisfies the following properties:\\
\begin{enumerate}
  \item $P_l$ is nonempty.
  \item $P_l$ is closed.
  \item $P_l$ is convex.\\
  \newcounter{enumii_saved}
  \setcounter{enumii_saved}{\value{enumi}}
\end{enumerate}
Therefore, the origin of $\mathbf{R}^p$ has a unique nearest point $\mathbf{d}_l$ in $P_l$, and we have:\\
\begin{enumerate}
  \setcounter{enumi}{\value{enumii_saved}}
  \item $\mathbf{d}_l+P \subset P_l$.
  \item $\|\mathbf{d}_l\| > l$.
  \item $\|\mathbf{d}_l\| = l \|\mathbf{d}_1\|$.\\
  \newcounter{enumii2_saved}
  \setcounter{enumii2_saved}{\value{enumi}}
\end{enumerate}
Finally,\\
\begin{enumerate}
  \setcounter{enumi}{\value{enumii2_saved}}
  \item $\forall \mathbf{x} \in \mathbf{d}_l + P, \ \mathbf{B}(\mathbf{x},l) \subset P\backslash \{\mathbf{0}\}$.\\
\end{enumerate}
\end{lemma}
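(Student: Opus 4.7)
\medskip
\noindent\textbf{Proof plan.} I would prove the properties in the order stated, since the later items all lean on the earlier ones. For items 1--3, the strategy is entirely routine. For item~1, I would exploit $\mathrm{int}(P) \neq \emptyset$ to pick some $\mathbf{p} \in \mathrm{int}(P)$ with $\mathbf{B}(\mathbf{p},r) \subset P$, and then scale: since $P$ is a cone, $(l/r)\mathbf{B}(\mathbf{p},r) = \mathbf{B}((l/r)\mathbf{p},l) \subset P$, so $(l/r)\mathbf{p} \in P_l$. For item~2, I would take $\mathbf{x}_k \in P_l$ converging to $\mathbf{x}$ and any $\mathbf{y} \in \mathbf{B}(\mathbf{x},l)$; the translates $\mathbf{y}_k = \mathbf{y}+(\mathbf{x}_k-\mathbf{x})$ lie in $\mathbf{B}(\mathbf{x}_k,l) \subset P$ and converge to $\mathbf{y}$, so $\mathbf{y} \in P$ by closedness of $P$. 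For item~3, given $\mathbf{x}_1,\mathbf{x}_2 \in P_l$ and $\mathbf{z}$ in the ball around $\lambda \mathbf{x}_1 + (1-\lambda)\mathbf{x}_2$, writing $\mathbf{v} = \mathbf{z} - \lambda\mathbf{x}_1 - (1-\lambda)\mathbf{x}_2$ with $\|\mathbf{v}\|\le l$ gives $\mathbf{z} = \lambda(\mathbf{x}_1+\mathbf{v}) + (1-\lambda)(\mathbf{x}_2+\mathbf{v}) \in P$ by convexity of $P$.

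Once $P_l$ is nonempty, closed, and convex, the standard Hilbert space projection theorem produces the unique nearest point $\mathbf{d}_l$. For item~4, I would note that $\mathbf{B}(\mathbf{d}_l+\mathbf{p},l) = \mathbf{B}(\mathbf{d}_l,l) + \mathbf{p}$, and since $\mathbf{B}(\mathbf{d}_l,l)\subset P$ and $\mathbf{p}\in P$, the fact that $P+P\subset P$ (true for any convex cone containing the origin) yields $\mathbf{B}(\mathbf{d}_l+\mathbf{p},l)\subset P$, hence $\mathbf{d}_l+\mathbf{p}\in P_l$.

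Item~5 is the step I expect to be the main obstacle, because it asserts a \emph{strict} inequality $\|\mathbf{d}_l\|>l$, and the natural first attempt only gives $\|\mathbf{d}_l\|\geq l$. I would handle it by contradiction, splitting into two cases. If $\|\mathbf{d}_l\|<l$, then by the triangle inequality $\mathbf{B}(\mathbf{0},l-\|\mathbf{d}_l\|)\subset \mathbf{B}(\mathbf{d}_l,l)\subset P$; any nonzero $\mathbf{v}$ in this small ball satisfies $\mathbf{v},-\mathbf{v}\in P$, contradicting pointedness. The delicate case is $\|\mathbf{d}_l\|=l$: a direct ball-contains-$\mathbf{0}$ argument fails, but one can compute the convex conic hull of $\mathbf{B}(\mathbf{d}_l,l)$, which a short inner-product calculation shows equals $\{\mathbf{y}:\langle\mathbf{y},\mathbf{d}_l\rangle>0\}\cup\{\mathbf{0}\}$, the open halfspace plus the origin. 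Since $P$ is a closed convex cone containing this set, $P$ contains its closure, namely the closed halfspace $\{\langle\mathbf{y},\mathbf{d}_l\rangle\ge 0\}$, which plainly contains lines and so violates pointedness. For item~6, I would observe that $\mathbf{x}\in P_l \Leftrightarrow \mathbf{B}(\mathbf{x},l)\subset P \Leftrightarrow \mathbf{B}(\mathbf{x}/l,1)\subset (1/l)P = P \Leftrightarrow \mathbf{x}/l \in P_1$, so $P_l = l P_1$; uniqueness of the nearest point then gives $\mathbf{d}_l = l\mathbf{d}_1$, and the norm identity follows. Finally, item~7 is a direct consequence of items 4 and 5: any $\mathbf{x}\in \mathbf{d}_l+P$ lies in $P_l$ (by 4), so $\mathbf{B}(\mathbf{x},l)\subset P$; moreover $\|\mathbf{x}\|\geq \|\mathbf{d}_l\|>l$ because $\mathbf{d}_l$ is the nearest point of $P_l$ to the origin, so $\mathbf{0}\notin \mathbf{B}(\mathbf{x},l)$.
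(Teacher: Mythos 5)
Your proposal is correct and follows essentially the same route as the paper's proof: items 1--4, 6, and 7 are handled identically (the paper is merely terser on 1 and 3), and your treatment of the delicate case $\|\mathbf{d}_l\|=l$ in item 5 --- showing the conic hull of $\mathbf{B}(\mathbf{d}_l,l)$ is the open halfspace $\{\langle\mathbf{y},\mathbf{d}_l\rangle>0\}$ plus the origin, so the closed cone $P$ would contain a closed halfspace and fail to be pointed --- is exactly the paper's inner-product computation with $\lambda=\langle\mathbf{d}_l,\mathbf{x}\rangle/\|\mathbf{x}\|^2$ phrased geometrically. No gaps.
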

\begin{proof}
\begin{enumerate}
\item This follows from the fact that $P$ is a cone with a nonempty interior.
\item Let $\mathbf{x}_k$ be a sequence in $P_l$ converging to $\mathbf{x}$. Let $\mathbf{y} \in \mathbf{R}^p$ such that
$\|\mathbf{y}-\mathbf{x}\| \leq l$. We can write $\|\mathbf{y}-\mathbf{x}\| = \|(\mathbf{y}-\mathbf{x}+\mathbf{x}_k)-\mathbf{x}_k\| \leq l$. As $\mathbf{x}_k \in P_l$, it follows that $\mathbf{y}-\mathbf{x}+\mathbf{x}_k \in P$. Taking the limit and knowing that $P$ is closed yields
$\mathbf{y} \in P$.
\item This follows directly from the convexity of the norm $\|\cdot\|$.
\item Let $\mathbf{d}\in P$ and $\mathbf{y} \in \mathbf{R}^p$ such that $\|\mathbf{y}-(\mathbf{d}_l+\mathbf{d})\| \leq l$. We can write $\|\mathbf{y}-(\mathbf{d}_l+\mathbf{d}) \| = \|(\mathbf{y}-\mathbf{d})-\mathbf{d}_l\| \leq l$. As $\mathbf{d}_l \in P_l$, it follows that $\mathbf{y}-\mathbf{d} \in P$. As $P$ is convex, we get $\mathbf{y} = \mathbf{y}-\mathbf{d} + \mathbf{d} \in P+P \subset P$.
\item Since $P$ is pointed, $\mathbf{0} \notin \mathrm{int}(P)$. Therefore, $\|\mathbf{x}\| \geq l$ for every
$\mathbf{x} \in P_l$; in particular, $\|\mathbf{d}_l\| \geq l.$\\
Pointedness also requires strict inequality here. To see why, suppose $\|\mathbf{d}_l\| = l.$ Consider an arbitrary $\mathbf{x}$  such that
    $\langle \mathbf{d}_l,\mathbf{x} \rangle > 0$, and define $\lambda = \langle \mathbf{d}_l,\mathbf{x} \rangle / \|\mathbf{x}\|^2$. Then $$\|\lambda \mathbf{x} - \mathbf{d}_l\|^2 = \lambda^2 \|\mathbf{x}\|^2 - 2 \lambda \langle \mathbf{d}_l,\mathbf{x} \rangle +l^2 = l^2-\langle \mathbf{d}_l,\mathbf{x}\rangle^2 / \|\mathbf{x}\|^2.$$  Hence,
    $\|\lambda \mathbf{x} - \mathbf{d}_l\| < l$ and $\lambda \mathbf{x} \in P$. As $\lambda > 0$, it follows that $\mathbf{x} \in P$. This shows that every $\mathbf{x}$ in the open half-space defined by $\langle \mathbf{d}_l,\mathbf{x} \rangle > 0$ lies in $P$. $P$ being closed, we get that $P$ contains a closed half-space, which contradicts that the fact $P$ is pointed.
\item It can easily be seen that $P_l = l P_1$. Therefore, $\|\mathbf{d}_l\| = \inf \{ \|\mathbf{x}\|, \ \mathbf{x} \in P_l \} = l \inf \{ \|\mathbf{x}\|, \ \mathbf{x} \in P_1 \} = l \|\mathbf{d}_1\| = l \mu(P)$.
\item This follows directly from (4) and (5).\\
\end{enumerate}
\end{proof}

As $P$ has a nonempty interior, there exists  a closed ordering cone with nonempty interior $C$ such that
$P \subset \mathrm{int}(C)\cup\{\mathbf{0}\}$. We can therefore define $\mathcal{K}(C,P)$ as the set
of all nonempty compact subsets $K$ of $\mathbf{R}^p$ such that
\begin{equation} \label{eq:cond3}
\mathcal{E}(K,P) = \mathcal{E}(K,C).
\end{equation}
The proof of (\ref{eq:lip1}), completed in Proposition \ref{prop:lips1},  requires two technical lemmas.  Lemma \ref{lem:bounding2}, which uses Lemma~\ref{lem:bounding1}, states the boundedness of a set introduced in the proof of Proposition~\ref{prop:lips1}. Hereinafter, the complement of a set $S \subset \mathbf{R}^p$ is denoted $S^c$.\\

\begin{lemma} \label{lem:bounding1}
Let  $\alpha(C,P) = \inf \{ \|\mathbf{x}-\mathbf{y}\|, \ \mathbf{x} \in \widehat{P}, \ \mathbf{y} \in  (\mathrm{int}(C))^c\}$, where $\widehat{P} = \{\mathbf{d} \in P, \ \|\mathbf{d}\| = 1 \}$. Then $\alpha(C,P) > 0$.
\end{lemma}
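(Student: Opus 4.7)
The plan is to argue by contradiction, exploiting the fact that $\widehat{P}$ is compact and $(\mathrm{int}(C))^c$ is closed, and the two sets are disjoint.

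First I would verify the topological properties of the two sets. The set $\widehat{P}$ is the intersection of the closed set $P$ with the unit sphere in $\mathbf{R}^p$, so it is closed and bounded, hence compact. Since $P$ has nonempty interior, $P$ contains nonzero vectors, so scaling gives $\widehat{P} \neq \emptyset$. The set $(\mathrm{int}(C))^c$ is the complement of an open set, hence closed. Next, disjointness: any $\mathbf{x} \in \widehat{P}$ satisfies $\mathbf{x} \in P$ and $\mathbf{x} \neq \mathbf{0}$, so the hypothesis $P \subset \mathrm{int}(C) \cup \{\mathbf{0}\}$ forces $\mathbf{x} \in \mathrm{int}(C)$; therefore $\widehat{P} \cap (\mathrm{int}(C))^c = \emptyset$.

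Now suppose for contradiction that $\alpha(C,P) = 0$. Then there exist sequences $\mathbf{x}_k \in \widehat{P}$ and $\mathbf{y}_k \in (\mathrm{int}(C))^c$ with $\|\mathbf{x}_k - \mathbf{y}_k\| \to 0$. By compactness of $\widehat{P}$, some subsequence $\mathbf{x}_{k_j}$ converges to a limit $\mathbf{x}_\infty \in \widehat{P}$. Along that subsequence, $\mathbf{y}_{k_j} \to \mathbf{x}_\infty$ as well, and since $(\mathrm{int}(C))^c$ is closed we obtain $\mathbf{x}_\infty \in (\mathrm{int}(C))^c$. This contradicts the disjointness established above, and the proof concludes.

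There is no serious obstacle: the statement is the standard fact that a compact set and a disjoint closed set in $\mathbf{R}^p$ are at positive distance. The only point requiring a brief argument is the non-emptiness of $\widehat{P}$ and the inclusion $\widehat{P} \subset \mathrm{int}(C)$, both of which follow from $P$ having nonempty interior and the cone inclusion $P \subset \mathrm{int}(C) \cup \{\mathbf{0}\}$.
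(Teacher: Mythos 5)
Your proof is correct and follows essentially the same route as the paper's: argue by contradiction, extract a convergent subsequence from $\widehat{P}$ by compactness, use closedness of $(\mathrm{int}(C))^c$ to place the common limit in both sets, and invoke $P \subset \mathrm{int}(C)\cup\{\mathbf{0}\}$ to force that limit to be $\mathbf{0}$, contradicting membership in the unit sphere. The only cosmetic difference is that you establish the disjointness of the two sets up front rather than after passing to the limit.
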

\begin{proof}
Assume for contradiction that $\alpha(C,P) = 0$. Then there exist two sequences  $\mathbf{x}_k$ in $\widehat{P}$ and  $\mathbf{y}_k$ in $(\mathrm{int}(C))^c$
such that $\lim \|\mathbf{x}_k-\mathbf{y}_k \| =0$. As $\widehat{P}$ is compact, by passing to a subsequence
if necessary, we can assume that $\mathbf{x}_k$ converges to some $\mathbf{x} \in \widehat{P}$. Therefore, the sequence $\mathbf{y}_k$ is bounded. By passing to a subsequence if necessary, we can assume that
$\mathbf{y}_k$ converges to some  $\mathbf{y}$. Clearly, $\mathbf{y} = \mathbf{x}$; also,
$\mathbf{y} \in (\mathrm{int}(C))^c$ as $(\mathrm{int}(C))^c$ is closed. By assumption,  $P \subset \mathrm{int}(C)\cup\{\mathbf{0}\}$. It follows that $\mathbf{y} \in (\mathrm{int}(C)\cup\{\mathbf{0}\}) \cap (\mathrm{int}(C))^c$, which implies
$\mathbf{y}=\mathbf{0}$ and therefore $\mathbf{x}=\mathbf{0}$, which  contradicts $\mathbf{x} \in \widehat{P}$.
\end{proof}\\

\begin{lemma} \label{lem:bounding2}
Let $\mathbf{y} \in (\mathrm{int}(C))^c$. Then,  $\forall \mathbf{x} \in (\mathbf{y}+P) \cap (\mathrm{int}(C))^c, \ \|\mathbf{x}\| \leq  \alpha'(C,P) \|\mathbf{y}\|$, where $\alpha'(C,P) = (1+1 / \alpha (C,P)) > 1$.
\end{lemma}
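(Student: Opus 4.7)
The plan is to reduce to Lemma~\ref{lem:bounding1} by normalizing the ``$P$-part'' of $\mathbf{x}$. Write $\mathbf{d}=\mathbf{x}-\mathbf{y}\in P$; the triangle inequality gives $\|\mathbf{x}\|\le\|\mathbf{y}\|+\|\mathbf{d}\|$, so the whole task is to bound $\|\mathbf{d}\|$ by a multiple of $\|\mathbf{y}\|$. If $\mathbf{d}=\mathbf{0}$ then $\mathbf{x}=\mathbf{y}$ and the inequality is trivial (recall $\alpha'(C,P)>1$), so I can assume $\mathbf{d}\neq\mathbf{0}$.

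For the nontrivial case, I would form the unit vector $\mathbf{d}/\|\mathbf{d}\|\in\widehat{P}$ and then pair it with $\mathbf{x}/\|\mathbf{d}\|$ in the infimum defining $\alpha(C,P)$. To do this, I need to argue that $\mathbf{x}/\|\mathbf{d}\|\in(\mathrm{int}(C))^c$. The key observation here is that $\mathrm{int}(C)$ is closed under multiplication by positive scalars (since $C$ is a convex cone, so scaling a ball contained in $C$ by $\lambda>0$ yields a ball still contained in $C$), and hence so is its complement. Since $\mathbf{x}\in(\mathrm{int}(C))^c$ by hypothesis, $\mathbf{x}/\|\mathbf{d}\|\in(\mathrm{int}(C))^c$ as well.

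Now Lemma~\ref{lem:bounding1} applies directly:
\begin{displaymath}
\alpha(C,P)\;\le\;\left\|\frac{\mathbf{d}}{\|\mathbf{d}\|}-\frac{\mathbf{x}}{\|\mathbf{d}\|}\right\|\;=\;\left\|\frac{-\mathbf{y}}{\|\mathbf{d}\|}\right\|\;=\;\frac{\|\mathbf{y}\|}{\|\mathbf{d}\|},
\end{displaymath}
so $\|\mathbf{d}\|\le\|\mathbf{y}\|/\alpha(C,P)$. Plugging this into the triangle inequality gives
\begin{displaymath}
\|\mathbf{x}\|\le\|\mathbf{y}\|+\frac{\|\mathbf{y}\|}{\alpha(C,P)}=\left(1+\frac{1}{\alpha(C,P)}\right)\|\mathbf{y}\|=\alpha'(C,P)\,\|\mathbf{y}\|,
\end{displaymath}
and $\alpha'(C,P)>1$ follows from $\alpha(C,P)>0$ (Lemma~\ref{lem:bounding1}).

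I do not expect a serious obstacle here; the only step that requires a moment's thought is the homogeneity of $(\mathrm{int}(C))^c$, which is the ingredient that lets us normalize $\mathbf{d}$ and still land in the set over which $\alpha(C,P)$ is taken. Everything else is bookkeeping with the triangle inequality.
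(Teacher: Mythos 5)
Your proof is correct and follows essentially the same route as the paper's: decompose $\mathbf{x}=\mathbf{y}+\mathbf{d}$, dispose of $\mathbf{d}=\mathbf{0}$, normalize by $\|\mathbf{d}\|$ so that $\mathbf{d}/\|\mathbf{d}\|\in\widehat{P}$ and $\mathbf{x}/\|\mathbf{d}\|\in(\mathrm{int}(C))^c$, invoke Lemma~\ref{lem:bounding1} to get $\|\mathbf{d}\|\le\|\mathbf{y}\|/\alpha(C,P)$, and finish with the triangle inequality. The only difference is that you spell out the positive-homogeneity of $(\mathrm{int}(C))^c$, which the paper uses implicitly.
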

\begin{proof}
Let $\mathbf{x} \in (\mathbf{y}+P) \cap (\mathrm{int}(C))^c$. Choose $\mathbf{d} \in P$ such that $\mathbf{x} = \mathbf{y}+\mathbf{d}$. If $\mathbf{d}=\mathbf{0}$, then $\mathbf{x} = \mathbf{y}$ and $\|\mathbf{x}\| = \| \mathbf{y}\| \leq \alpha'(C,P) \| \mathbf{y}\|$ as $\alpha'(C,P) > 1$. Otherwise, we can write $\mathbf{x} / \|\mathbf{d}\| - \mathbf{d} / \|\mathbf{d}\| = \mathbf{y} / \|\mathbf{d}\|$. We have
$ \mathbf{d} / \|\mathbf{d}\| \in \widehat{P}$, $\mathbf{x} / \|\mathbf{d}\| \in (\mathrm{int}(C))^c$, therefore from Lemma \ref{lem:bounding1}, $\|\mathbf{y}\|  / \|\mathbf{d}\| \geq \alpha (C,P)$. Finally, $\| \mathbf{x}\| \leq \|\mathbf{y}\| + \| \mathbf{d}\| \leq (1+1 / \alpha (C,P)) \|\mathbf{y}\|$.
\end{proof}\\

\begin{proposition} \label{prop:lips1}
There exists a constant $M(C,P) \geq 0$ such that $\forall K_1,K_2 \in \mathcal{K}(C,P)$,
\begin{displaymath}
 \mathcal{H}(\mathcal{E}(K_1,P),\mathcal{E}(K_2,P)) \leq M(C,P)  \ \mathcal{H}(K_1,K_2).
\end{displaymath}
\end{proposition}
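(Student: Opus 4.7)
The plan is to bound the one-sided Hausdorff distance from $\mathcal{E}(K_2,P)$ to $\mathcal{E}(K_1,P)$; the reverse direction follows by symmetry. Let $h:=\mathcal{H}(K_1,K_2)$ and fix $\mathbf{y}_2\in\mathcal{E}(K_2,P)$. The first step is to produce a candidate $\mathbf{y}_1\in\mathcal{E}(K_1,P)$: use the Hausdorff estimate to pick $\mathbf{z}_1\in K_1$ with $\|\mathbf{y}_2-\mathbf{z}_1\|\le h$, and apply Corollary \ref{cor:extstabcompactset} to decompose $\mathbf{z}_1=\mathbf{y}_1+\mathbf{d}'$ with $\mathbf{y}_1\in\mathcal{E}(K_1,P)$ and $\mathbf{d}'\in P$. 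The triangle inequality already yields $\|\mathbf{y}_2-\mathbf{y}_1\|\le h+\|\mathbf{d}'\|$, so the whole problem reduces to bounding $\|\mathbf{d}'\|$ by a constant multiple of $h$.

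For the second step, I exploit the hypothesis $K_2\in\mathcal{K}(C,P)$, which gives $\mathcal{E}(K_2,P)=\mathcal{E}(K_2,C)$ and makes $\mathbf{y}_2$ also $C$-minimal in $K_2$. Pick $\mathbf{z}_2\in K_2$ with $\|\mathbf{y}_1-\mathbf{z}_2\|\le h$, and write $\mathbf{z}_2=\mathbf{y}_1+\mathbf{e}_2$ with $\|\mathbf{e}_2\|\le h$. The $C$-minimality of $\mathbf{y}_2$ forces $\mathbf{y}_2-\mathbf{z}_2\notin\mathrm{int}(C)$, and a direct rearrangement using $\mathbf{e}_1:=\mathbf{y}_2-\mathbf{z}_1$ (with $\|\mathbf{e}_1\|\le h$) gives
\begin{displaymath}
\mathbf{y}_2-\mathbf{z}_2 \;=\; \mathbf{d}' + (\mathbf{e}_1-\mathbf{e}_2),
\end{displaymath}
so $\mathbf{d}'$ sits within $\|\mathbf{e}_1-\mathbf{e}_2\|\le 2h$ of an explicit point of $(\mathrm{int}(C))^c$.

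The third and decisive step is a scaled form of Lemma \ref{lem:bounding1}: for any $\mathbf{d}\in P\setminus\{\mathbf{0}\}$, the unit vector $\mathbf{d}/\|\mathbf{d}\|\in\widehat{P}$ has distance at least $\alpha(C,P)$ from $(\mathrm{int}(C))^c$, and since $(\mathrm{int}(C))^c$ is a (closed) cone, homogeneity gives $d(\mathbf{d},(\mathrm{int}(C))^c)\ge\alpha(C,P)\|\mathbf{d}\|$. Applied to $\mathbf{d}'$ when $\mathbf{d}'\neq\mathbf{0}$ (the case $\mathbf{d}'=\mathbf{0}$ is trivial, with $\|\mathbf{y}_2-\mathbf{y}_1\|\le h$), this forces $\alpha(C,P)\|\mathbf{d}'\|\le 2h$, so $\|\mathbf{d}'\|\le 2h/\alpha(C,P)$ and $\|\mathbf{y}_2-\mathbf{y}_1\|\le (1+2/\alpha(C,P))h$; the proposition then holds with $M(C,P)=1+2/\alpha(C,P)$ (equivalently $2\alpha'(C,P)-1$). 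I expect the main obstacle to be conceptual rather than computational: external stability by itself offers no quantitative handle on $\mathbf{d}'$, and the only way to gain such control is to widen the ordering cone to $C$. The $\mathcal{K}(C,P)$ hypothesis is precisely what makes this widening legitimate, and the assumption $P\subset\mathrm{int}(C)\cup\{\mathbf{0}\}$ (encoded in $\alpha(C,P)>0$) is what turns the qualitative condition $\mathbf{y}_2-\mathbf{z}_2\notin\mathrm{int}(C)$ into the metric estimate above.
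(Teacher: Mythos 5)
Your proof is correct, and while it shares the paper's skeleton---a one-sided bound, external stability of the compact set (Corollary \ref{cor:extstabcompactset}) to manufacture the candidate minimal element, the $\mathcal{K}(C,P)$ hypothesis to produce a point of $(\mathrm{int}(C))^c$, and the constant $\alpha(C,P)$ of Lemma \ref{lem:bounding1} as the quantitative engine---the way you extract the estimate is genuinely different. The paper, starting from $\mathbf{y}_1 \in \mathcal{E}(K_1,P)$, constructs the auxiliary point $\mathbf{d}_l$ of Lemma \ref{lem:bounding3}, shows by contradiction (using $\mathcal{E}(K_1,P)=\mathcal{E}(K_1,C)$ and a ball of radius $l$ around $\mathbf{y}_2$ meeting $K_1$) that both $\mathbf{y}_1-\mathbf{y}_2-\mathbf{d}_l$ and $\mathbf{y}_1-\mathbf{k}_2-\mathbf{d}_l$ lie in $(\mathrm{int}(C))^c$, and then invokes Lemma \ref{lem:bounding2}; the resulting constant $(1+\alpha'(C,P))\mu(C)+\alpha'(C,P)$ carries the extra factor $\mu(C)=\|\mathbf{d}_1\|$. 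You instead select a second nearest point $\mathbf{z}_2\in K_2$ to $\mathbf{y}_1$, read off directly from the $C$-minimality of $\mathbf{y}_2$ that $\mathbf{y}_2-\mathbf{z}_2\in(\mathrm{int}(C))^c$, and upgrade Lemma \ref{lem:bounding1} by positive homogeneity to $d(\mathbf{d},(\mathrm{int}(C))^c)\geq\alpha(C,P)\|\mathbf{d}\|$ for $\mathbf{d}\in P$. This bypasses Lemmas \ref{lem:bounding3} and \ref{lem:bounding2} entirely, is more elementary, and yields the cleaner constant $1+2/\alpha(C,P)$. Two small points deserve a sentence in a full write-up: the case $\mathbf{z}_2=\mathbf{y}_2$ requires $\mathbf{0}\notin\mathrm{int}(C)$, which follows from pointedness of $C$ exactly as in the proof of Lemma \ref{lem:bounding3}(5); and the homogeneity step requires that $(\mathrm{int}(C))^c$ be invariant under multiplication by positive scalars, which holds because $\mathrm{int}(C)$ is itself invariant under such scaling. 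Both are immediate, so the argument stands.
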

\begin{proof}
If $\mathcal{H}(K_1,K_2) = 0$, then $K_1 = K_2$ and the result is obvious. Therefore, assume that
 $\mathcal{H}(K_1,K_2) > 0$ and let $l= \mathcal{H}(K_1,K_2)$. Let $\mathbf{y}_1 \in \mathcal{E}(K_1,P)$. Then by definition of $l$, there exists $\mathbf{k}_2 \in K_2$ such that $\|\mathbf{y}_1-\mathbf{k}_2\| \leq l.$ From Corollary \ref{cor:extstabcompactset}, $K_2$ is externally stable, hence there exists $\mathbf{y}_2 \in \mathcal{E}(K_2,P)$ such that $\mathbf{k}_2 \in \mathbf{y}_2 + P$. From Lemma \ref{lem:bounding3}(7), there exists $\mathbf{d}_l \in C$ such that $\forall \mathbf{x} \in \mathbf{d}_l + C, \ B(\mathbf{x},l) \subset C\backslash \{\mathbf{0}\}$. We prove now that $\mathbf{y}_1-\mathbf{y}_2- \mathbf{d}_l \in (\mathrm{int}(C))^c$. Assume that $\mathbf{y}_1-\mathbf{y}_2 \in \mathbf{d}_l + C$, then $B(\mathbf{y}_1-\mathbf{y}_2,l) \in C \backslash \{\mathbf{0}\} $,
and therefore $B(\mathbf{y}_2,l) \subset \mathbf{y}_1 - C \backslash \{\mathbf{0}\}$. By definition of $l$ again, there exists $\mathbf{k}_1 \in K_1$
such that $\mathbf{k}_1 \in B(\mathbf{y}_2,l)$. Hence, $\mathbf{y}_1 \in \mathbf{k}_1 + C \backslash \{\mathbf{0}\}$. But, $\mathbf{y}_1 \in \mathcal{E}(K_1,P)$ and by
(\ref{eq:cond3}), $\mathcal{E}(K_1,P) = \mathcal{E}(K_1,C)$. Therefore, we obtain a contradiction. Finally, $\mathbf{y}_1-\mathbf{y}_2 \notin \mathbf{d}_l + C$. Hence, $\mathbf{y}_1-\mathbf{y}_2-\mathbf{d}_l \in (\mathrm{int}(C))^c$. Now, we want to use
Lemma \ref{lem:bounding2} with $\mathbf{y} = \mathbf{y}_1-\mathbf{k}_2-\mathbf{d}_l$ and $\mathbf{x} = \mathbf{y}_1-\mathbf{y}_2-\mathbf{d}_l$. As $\mathbf{x} \in (\mathrm{int}(C))^c$ and $\mathbf{x} -\mathbf{y} = \mathbf{k}_2 - \mathbf{y}_2 \in P$, we only need to check that $\mathbf{y} \in  (\mathrm{int}(C))^c$. If $ \mathbf{y}_1-\mathbf{k}_2  \in \mathbf{d}_l + C$, then
by definition of $\mathbf{d}_l$, $\| \mathbf{y}_1-\mathbf{k}_2 \| > l$, which is a contradiction. Hence, $\mathbf{y}_1-\mathbf{k}_2 \notin \mathbf{d}_l + C$, or $\mathbf{y}_1-\mathbf{k}_2 - \mathbf{d}_l \in (\mathrm{int}(C))^c$. From Lemma~\ref{lem:bounding2}, it follows that $$ \|\mathbf{y}_1-\mathbf{y}_2-\mathbf{d}_l\| \leq \alpha'(C,P) \|\mathbf{y}_1-\mathbf{k}_2-\mathbf{d}_l\|.$$
Hence, $$\|\mathbf{y}_1-\mathbf{y}_2\| \leq (1+\alpha'(C,P)) \|\mathbf{d}_l\| + \alpha'(C,P) \|\mathbf{y}_1-\mathbf{k}_2\|.$$
From Lemma \ref{lem:bounding3}(6), we have $\|\mathbf{d}_l\| = l \mu(C)$. Recalling that $\|\mathbf{y}_1-\mathbf{k}_2\| \leq l$, and defining
$M(C,P) = (1+\alpha'(C,P)) \mu(C) + \alpha'(C,P)$, we get $$\|\mathbf{y}_1-\mathbf{y}_2\| \leq M(C,P) \ l,$$
which shows that $$ \sup_{\mathbf{y}_1 \in \mathcal{E}(K_1,P)} d(\mathbf{y}_1,\mathcal{E}(K_2,P)) \leq M(C,P) \ \mathcal{H}(K_1,K_2).$$
Interchanging the role of $K_1$ and $K_2$ finally yields $$ \mathcal{H}(\mathcal{E}(K_1,P),\mathcal{E}(K_2,P)) \leq M(C,P)  \ \mathcal{H}(K_1,K_2).$$
\end{proof}

%\begin{corollary} \label{cor:VLipschitz}
%Assume that  $\forall t \in I, \ \forall \mathbf{x} \in \mathbf{R}^n, \ \mathrm{cl}(Y(t,\mathbf{x})) \subset \mathcal{K}(C,P)$.
%Then, $\forall t_1,t_2 \in I$ and  $\forall \mathbf{x_0},\mathbf{y_0} \in \mathbf{R}^n$,
%$$
% V(t_2,\mathbf{y_0}) \subset  V(t_1,\mathbf{x_0}) + M(C,P) \bigg(\frac{K_{\mathbf{L}}}{K_{\mathbf{f}}} \exp(K_{\mathbf{f}} T)\|\mathbf{y_0}-\mathbf{x_0}\|  + M_{\mathbf{L}} |t_2-t_1|\bigg)B.
% $$
%\end{corollary}
%\begin{proof}
%Combine (\ref{eq:lip1}) with Corollary \ref{prop:objspesti}.
%\end{proof}\\

We conclude this section by proving in Proposition \ref{prop:last} that  (\ref{eq:cond3}) implies that $\mathcal{E}(K,P) = \mathcal{PE}(K,P).$ In other words, a set $K \in \mathcal{K}(C,P)$ only contains
properly minimal elements.  \\

%Hence, if $\forall \mathbf{x_0} \in \mathbf{R}^n, \ \forall t \in I, \ \mathrm{cl}(Y(t,\mathbf{x})) \in \mathcal{K}(C,P)$, then the sets $V(t,\mathbf{x})$ only contain  properly minimal elements.

\begin{proposition} \label{prop:last}
If $K \in \mathcal{K}(C,P)$, then $\mathcal{E}(K,P) = \mathcal{PE}(K,P)$.
\end{proposition}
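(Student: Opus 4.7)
The plan is to establish the inclusion $\mathcal{E}(K,P) \subset \mathcal{PE}(K,P)$, since the reverse inclusion is built into the definition of $\mathcal{PE}(K,P)$. So take an arbitrary $\mathbf{y} \in \mathcal{E}(K,P)$; the goal is to verify the properness condition $T_{K+P}(\mathbf{y}) \cap -P = \{\mathbf{0}\}$. The zero vector automatically lies in both sets (it is in every contingent cone at a point of the set, and in $P$ since $P$ is a cone containing the origin), so only the opposite inclusion $T_{K+P}(\mathbf{y}) \cap -P \subset \{\mathbf{0}\}$ carries any content.

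The natural route is to transfer the problem from $P$ to $C$, exploit the strict inclusion $P \subset \mathrm{int}(C) \cup \{\mathbf{0}\}$, and invoke Lemma \ref{lem:final}. By hypothesis $K \in \mathcal{K}(C,P)$, so (\ref{eq:cond3}) gives $\mathbf{y} \in \mathcal{E}(K,C)$. Since $C$ is a pointed closed convex cone with nonempty interior, Lemma \ref{lem:final} applied with $C$ in place of $P$ yields
\begin{equation*}
T_{K+C}(\mathbf{y}) \cap -\mathrm{int}(C) = \emptyset.
\end{equation*}

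Next I would use two elementary monotonicity facts. First, since $P \subset C$ and $\mathbf{y} \in K+P \subset K+C$, any sequence witnessing $\mathbf{v} \in T_{K+P}(\mathbf{y})$ also witnesses $\mathbf{v} \in T_{K+C}(\mathbf{y})$, so $T_{K+P}(\mathbf{y}) \subset T_{K+C}(\mathbf{y})$. Second, the assumption $P \subset \mathrm{int}(C) \cup \{\mathbf{0}\}$ gives $-P \subset -\mathrm{int}(C) \cup \{\mathbf{0}\}$. Combining these,
\begin{equation*}
T_{K+P}(\mathbf{y}) \cap -P \;\subset\; T_{K+C}(\mathbf{y}) \cap \bigl(-\mathrm{int}(C) \cup \{\mathbf{0}\}\bigr) \;\subset\; \{\mathbf{0}\},
\end{equation*}
which is exactly what is needed.

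I do not foresee any real obstacle here: the argument is essentially bookkeeping once one recognizes that $\mathcal{K}(C,P)$ was designed so that minimality with respect to $P$ upgrades to minimality with respect to the strictly larger cone $C$, and that Lemma \ref{lem:final} already delivers the proper‑minimality property relative to $\mathrm{int}(C)$. The only point worth stating explicitly is the monotonicity of the contingent cone under set inclusion, and the use of the standing assumption that $C$ itself satisfies the pointed/closed/convex/nonempty-interior hypotheses so that Lemma \ref{lem:final} genuinely applies with $C$ in the role of $P$.
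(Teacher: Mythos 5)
Your proof is correct and follows essentially the same route as the paper's: pass from $\mathcal{E}(K,P)$ to $\mathcal{E}(K,C)$ via (\ref{eq:cond3}), apply Lemma \ref{lem:final} with $C$ in place of $P$, and use $T_{K+P}(\mathbf{y}) \subset T_{K+C}(\mathbf{y})$ together with $-P\backslash\{\mathbf{0}\} \subset -\mathrm{int}(C)$. The only difference is cosmetic — the paper argues by contradiction with a nonzero $\mathbf{d} \in P \cap (-T_{K+P}(\mathbf{y}))$, while you state the same inclusions directly.
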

\begin{proof}
By definition, we have $\mathcal{PE}(K,P) \subset \mathcal{E}(K,P)$. Let $\mathbf{y} \in \mathcal{E}(K,P)$. By definition of $\mathcal{K}(C,P)$, it follows that $\mathbf{y} \in \mathcal{E}(K,C)$. Assume that $\mathbf{y} \notin \mathcal{PE}(K,P)$. Then there exists $\mathbf{d} \in P, \ \mathbf{d} \neq \mathbf{0}$, such that $-\mathbf{d} \in T_{K+P}(\mathbf{y}).$ Now, as $P \subset \mathrm{int}(C)\cup\{\mathbf{0}\}$ and $\mathbf{d} \neq \mathbf{0}$, it follows that $\mathbf{d} \in \mathrm{int}(C)$. As $K+P \subset K+C$,  $T_{K+P}(\mathbf{y}) \subset T_{K+C}(\mathbf{y})$. Therefore, $-\mathbf{d} \in T_{K+C}(\mathbf{y}) \cap -\mathrm{int}(C)$, which, from Lemma \ref{lem:final}, contradicts the fact that $\mathbf{y} \in \mathcal{E}(K,C)$.
\end{proof}

\section{A multiobjective dynamic programming equation} \label{s:moo}
In this section, we state the multiobjective dynamic programming equation satisfied by the set-valued return function $V$. For this purpose, we need to introduce some additional notation. Let $t \in I$, $ \tau \in (0, T-t]$, $\mathbf{x} \in \mathbf{R}^n$, and define
the bounded set
\begin{displaymath}
\widetilde{Y}(\tau,t,\mathbf{x}) =
\bigg \{ \mathbf{J}(t,t+\tau,\mathbf{x},\mathbf{u}(\cdot)) +
V(t+\tau,  \mathbf{x}(t+\tau;t,\mathbf{x},\mathbf{u}(\cdot))), \mathbf{u}(\cdot) \in \mathcal{U}
\bigg \}.
\end{displaymath}

\begin{proposition} \label{th:dp}
For each $t \in I$, $ \tau \in (0, T-t]$, $\mathbf{x} \in \mathbf{R}^n$, the  multiobjective
dynamic programming equation for \emph{(MOC)} is:
\begin{displaymath}
V(t,\mathbf{x}) = \mathcal{E}(\mathrm{cl}(\widetilde{Y}(\tau,t,\mathbf{x})),P),
\end{displaymath}
or, using the definition of $\widetilde{Y}(\tau,t,\mathbf{x})$ above,
\begin{equation} \label{eq:dpp}
V(t,\mathbf{x}) = \mathcal{E} \bigg( \mathrm{cl}\bigg( \bigg \{ \mathbf{J}(t,t+\tau,\mathbf{x},\mathbf{u}(\cdot)) +
V(t+\tau,  \mathbf{x}(t+\tau;t,\mathbf{x},\mathbf{u}(\cdot))), \mathbf{u}(\cdot) \in \mathcal{U}
\bigg \}\bigg),P\bigg).
\end{equation}
\end{proposition}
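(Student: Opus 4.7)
My plan is to reduce the dynamic programming equation to an application of Lemma \ref{lem:dp4}, by verifying that the two compact sets $K_1 := \mathrm{cl}(\widetilde{Y}(\tau,t,\mathbf{x}))$ and $K_2 := \mathrm{cl}(Y(t,\mathbf{x}))$ satisfy $K_1 \subset K_2$ and $K_2 \subset K_1 + P$. Both sets are indeed compact in $\mathbf{R}^p$: $Y(t,\mathbf{x})$ is bounded by $M_{\mathbf{L}} T$, and $\widetilde{Y}(\tau,t,\mathbf{x})$ is bounded because $\|\mathbf{J}(t,t+\tau,\mathbf{x},\mathbf{u}(\cdot))\| \leq M_{\mathbf{L}} \tau$ and $V(t+\tau,\cdot) \subset \mathrm{cl}(Y(t+\tau,\cdot))$ is bounded by $M_{\mathbf{L}} T$ as well.

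The first inclusion $K_1 \subset K_2$ uses concatenation of controls and a limit argument. Pick $\mathbf{z} \in \widetilde{Y}(\tau,t,\mathbf{x})$, written $\mathbf{z} = \mathbf{J}(t,t+\tau,\mathbf{x},\mathbf{u}_1(\cdot)) + \mathbf{v}$ with $\mathbf{v} \in V(t+\tau,\mathbf{x}(t+\tau;t,\mathbf{x},\mathbf{u}_1(\cdot)))$. Since $\mathbf{v} \in \mathrm{cl}(Y(t+\tau,\mathbf{x}(t+\tau)))$, there is a sequence $\mathbf{v}_k = \mathbf{J}(t+\tau,T,\mathbf{x}(t+\tau),\mathbf{u}_{2,k}(\cdot))$ converging to $\mathbf{v}$. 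Concatenating $\mathbf{u}_1$ on $[t,t+\tau]$ with $\mathbf{u}_{2,k}$ on $[t+\tau,T]$ yields controls $\mathbf{u}_k(\cdot) \in \mathcal{U}$ with $\mathbf{J}(t,T,\mathbf{x},\mathbf{u}_k(\cdot)) = \mathbf{J}(t,t+\tau,\mathbf{x},\mathbf{u}_1(\cdot)) + \mathbf{v}_k \in Y(t,\mathbf{x})$. Passing to the limit gives $\mathbf{z} \in K_2$, so $\widetilde{Y}(\tau,t,\mathbf{x}) \subset K_2$ and hence $K_1 \subset K_2$.

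For the second inclusion $K_2 \subset K_1 + P$, take any $\mathbf{J}(t,T,\mathbf{x},\mathbf{u}(\cdot)) \in Y(t,\mathbf{x})$. Splitting the integral at $t+\tau$ and letting $\mathbf{x}(t+\tau)$ denote the state at time $t+\tau$,
\[
\mathbf{J}(t,T,\mathbf{x},\mathbf{u}(\cdot)) = \mathbf{J}(t,t+\tau,\mathbf{x},\mathbf{u}(\cdot)) + \mathbf{J}(t+\tau,T,\mathbf{x}(t+\tau),\mathbf{u}(\cdot)).
\]
The second summand lies in $Y(t+\tau,\mathbf{x}(t+\tau)) \subset \mathrm{cl}(Y(t+\tau,\mathbf{x}(t+\tau)))$, which is compact, hence externally stable by Corollary \ref{cor:extstabcompactset}. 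Therefore it equals $\mathbf{v} + \mathbf{p}$ for some $\mathbf{v} \in V(t+\tau,\mathbf{x}(t+\tau))$ and $\mathbf{p} \in P$, so $\mathbf{J}(t,T,\mathbf{x},\mathbf{u}(\cdot)) \in \widetilde{Y}(\tau,t,\mathbf{x}) + P$. Thus $Y(t,\mathbf{x}) \subset \widetilde{Y}(\tau,t,\mathbf{x}) + P$, and one closes up using boundedness of $\widetilde{Y}$ and closedness of $P$ to obtain $K_2 \subset K_1 + P$.

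With both inclusions established, Lemma \ref{lem:dp4} gives $\mathcal{E}(K_1,P) = \mathcal{E}(K_2,P)$, which is precisely the equation \eqref{eq:dpp}. The main subtlety I expect is the careful handling of closures in the second inclusion: elements of $V(t+\tau,\mathbf{x}(t+\tau))$ need not be realized by any admissible control, only approximated, which is why external stability of $\mathrm{cl}(Y(t+\tau,\mathbf{x}(t+\tau)))$ (as opposed to $Y$ itself) is essential.
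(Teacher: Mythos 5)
Your proposal is correct and follows essentially the same route as the paper: it applies Lemma \ref{lem:dp4} to $K_1=\mathrm{cl}(\widetilde{Y}(\tau,t,\mathbf{x}))$ and $K_2=\mathrm{cl}(Y(t,\mathbf{x}))$, establishing $K_1\subset K_2$ by concatenation of controls and a limiting argument (the paper's Lemma \ref{lem:dp1}) and $K_2\subset K_1+P$ by splitting the cost at $t+\tau$ and invoking external stability of the compact set $\mathrm{cl}(Y(t+\tau,\cdot))$ plus the closure step for bounded sets (the paper's Lemmas \ref{lem:dp2} and \ref{lem:dp3}). The subtlety you flag about needing external stability of the closure rather than of $Y$ itself is exactly the point the paper's argument also hinges on.
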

When $\tau = T-t$, using the fact that $V(T,\cdot) = \{\mathbf{0}\}$, it can be seen that (\ref{eq:dpp}) reduces to the definition
of $V$. Therefore, assume that $\tau < T-t$. We prove Proposition \ref{th:dp} at the end of this section using the following three lemmas.
\newline
\begin{lemma} \label{lem:dp1}
$\widetilde{Y}(\tau,t,\mathbf{x}) \subset
\mathrm{cl}(Y(t,\mathbf{x}))$.
\end{lemma}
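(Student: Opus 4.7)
The plan is to take an arbitrary element of $\widetilde{Y}(\tau,t,\mathbf{x})$ and exhibit a sequence in $Y(t,\mathbf{x})$ that converges to it, using a control-concatenation argument together with the fact that the minimal-element set is contained in the closure of the objective space.

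First I would pick $\mathbf{z}\in \widetilde{Y}(\tau,t,\mathbf{x})$. By definition there exists $\mathbf{u}(\cdot)\in\mathcal{U}$ and a point $\mathbf{v}\in V(t+\tau,\mathbf{y})$, where $\mathbf{y}=\mathbf{x}(t+\tau;t,\mathbf{x},\mathbf{u}(\cdot))$, such that $\mathbf{z}=\mathbf{J}(t,t+\tau,\mathbf{x},\mathbf{u}(\cdot))+\mathbf{v}$. Since
\begin{displaymath}
V(t+\tau,\mathbf{y})=\mathcal{E}(\mathrm{cl}(Y(t+\tau,\mathbf{y})),P)\subset \mathrm{cl}(Y(t+\tau,\mathbf{y})),
\end{displaymath}
I can pick a sequence $\mathbf{v}_k\to \mathbf{v}$ with $\mathbf{v}_k\in Y(t+\tau,\mathbf{y})$, and for each $k$ choose $\mathbf{u}_k(\cdot)\in\mathcal{U}$ with $\mathbf{v}_k=\mathbf{J}(t+\tau,T,\mathbf{y},\mathbf{u}_k(\cdot))$.

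Next I would concatenate the controls, defining $\hat{\mathbf{u}}_k(s)=\mathbf{u}(s)$ for $s\in[t,t+\tau]$ and $\hat{\mathbf{u}}_k(s)=\mathbf{u}_k(s)$ for $s\in(t+\tau,T]$. Each $\hat{\mathbf{u}}_k(\cdot)$ is still a bounded, Lebesgue measurable $U$-valued function, so $\hat{\mathbf{u}}_k(\cdot)\in\mathcal{U}$. By uniqueness of solutions to (\ref{eq:nec1}), the trajectory $s\mapsto \mathbf{x}(s;t,\mathbf{x},\hat{\mathbf{u}}_k(\cdot))$ coincides with $\mathbf{x}(s;t,\mathbf{x},\mathbf{u}(\cdot))$ on $[t,t+\tau]$ (so it reaches $\mathbf{y}$ at time $t+\tau$) and then with $\mathbf{x}(s;t+\tau,\mathbf{y},\mathbf{u}_k(\cdot))$ on $[t+\tau,T]$. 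Additivity of the Lebesgue integral in the definition~(\ref{eq:obj}) then gives
\begin{displaymath}
\mathbf{J}(t,T,\mathbf{x},\hat{\mathbf{u}}_k(\cdot))=\mathbf{J}(t,t+\tau,\mathbf{x},\mathbf{u}(\cdot))+\mathbf{J}(t+\tau,T,\mathbf{y},\mathbf{u}_k(\cdot))=\mathbf{J}(t,t+\tau,\mathbf{x},\mathbf{u}(\cdot))+\mathbf{v}_k.
\end{displaymath}

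Finally, the left-hand side lies in $Y(t,\mathbf{x})$ by definition, and as $k\to\infty$ it converges to $\mathbf{J}(t,t+\tau,\mathbf{x},\mathbf{u}(\cdot))+\mathbf{v}=\mathbf{z}$, so $\mathbf{z}\in\mathrm{cl}(Y(t,\mathbf{x}))$. I do not anticipate a real obstacle here; the only subtlety is noticing that we cannot expect $\mathbf{z}\in Y(t,\mathbf{x})$ itself because $V(t+\tau,\mathbf{y})$ is defined via the closure of $Y(t+\tau,\mathbf{y})$, which forces us to pass to the closure on the outer level and motivates the use of the approximating sequence $\mathbf{v}_k$ rather than a single control realizing $\mathbf{v}$ exactly.
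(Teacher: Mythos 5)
Your proposal is correct and follows essentially the same route as the paper: approximate the minimal element $\mathbf{v}\in V(t+\tau,\mathbf{y})\subset\mathrm{cl}(Y(t+\tau,\mathbf{y}))$ by costs of genuine controls, concatenate each such control with $\mathbf{u}(\cdot)$ on $[t,t+\tau]$, and use additivity of the cost integral to land in $Y(t,\mathbf{x})$, passing to the closure at the end. The only cosmetic difference is that you phrase the approximation with a convergent sequence $\mathbf{v}_k$ while the paper uses an arbitrary $\epsilon>0$, which is the same argument.
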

\begin{proof} Let $\widetilde{\mathbf{y}} \in
\widetilde{Y}(\tau,t,\mathbf{x}) $ and $\epsilon > 0$. Then there
exist $\mathbf{u}(\cdot) \in \mathcal{U}$ and
$\mathbf{y} \in Y(t+\tau,  \mathbf{x}(t+\tau;t,\mathbf{x},\mathbf{u}(\cdot)))$ such that
\begin{equation} \label{eq:lemdp1}
\| \widetilde{\mathbf{y}} -
\mathbf{J}(t,t+\tau,\mathbf{x},\mathbf{u}(\cdot))-
\mathbf{y} \| \leq \epsilon.
\end{equation}
Moreover, we have $\mathbf{y} = \mathbf{J}(t+\tau,T,\mathbf{x}(t+\tau;t,\mathbf{x},\mathbf{u}(\cdot)),\mathbf{\widehat{u}}(\cdot))$
for some $\mathbf{\widehat{u}}(\cdot) \in \mathcal{U}$. Define
the new control $\widetilde{\mathbf{u}}(\cdot) \in \mathcal{U}$ as
\begin{displaymath}
\mathbf{\widetilde{u}}(s) = \left\{ \begin{array}{ll}
            \mathbf{u}(s), & t \leq s \leq t+\tau ,\\
            \mathbf{\widehat{u}}(s), & t+\tau< s \leq T.\\
             \end{array} \right.
\end{displaymath}
Observe that
$$ \mathbf{J}(t,t+\tau,\mathbf{x},\mathbf{u}(\cdot)) = \displaystyle \int_{t}^{t+\tau} \mathbf{L}(\mathbf{x}(s;t,\mathbf{x},\mathbf{\widetilde{u}}(\cdot)),\mathbf{\widetilde{u}}(s)) \ \mathrm{d}s,$$
and
\begin{displaymath}
\begin{array}{lcl}
\mathbf{y} & = &  \mathbf{J}(t+\tau,T,\mathbf{x}(t+\tau;t,\mathbf{x},\mathbf{u}(\cdot)),\widehat{\mathbf{u}}(\cdot))\\
 & = & \displaystyle \int_{t+\tau}^{T} \mathbf{L}(\mathbf{x}(s;t+\tau,\mathbf{x}(t+\tau;t,\mathbf{x},\mathbf{u}(\cdot)),\mathbf{\widehat{u}}(\cdot)),\mathbf{\widehat{u}}(s)) \ \mathrm{d}s\\
 & = & \displaystyle \int_{t+\tau}^{T} \mathbf{L}(\mathbf{x}(s;t+\tau,\mathbf{x}(t+\tau;t,\mathbf{x},\mathbf{\widetilde{u}}(\cdot)),\mathbf{\widetilde{u}}(\cdot)),\mathbf{\widetilde{u}}(s)) \ \mathrm{d}s.\\
             \end{array}
\end{displaymath}
Hence, $$ \mathbf{J}(t,t+\tau,\mathbf{x},\mathbf{u}(\cdot))  +  \mathbf{y} = \mathbf{J}(t,T,\mathbf{x},\mathbf{\widetilde{u}}(\cdot)) \in Y(t,\mathbf{x}).$$
Since $\epsilon > 0$ is arbitrary, (\ref{eq:lemdp1}) implies that $\widetilde{Y}(\tau,t,\mathbf{x}) \subset
\mathrm{cl}(Y(t,\mathbf{x}))$.  \end{proof}
\newline
\begin{lemma} \label{lem:dp2}
$Y(t,\mathbf{x}) \subset \widetilde{Y}(\tau,t,\mathbf{x})+P
$.
\end{lemma}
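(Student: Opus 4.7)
The plan is to take an arbitrary $\mathbf{y} \in Y(t,\mathbf{x})$, realized by some control $\mathbf{u}(\cdot) \in \mathcal{U}$, and exhibit a decomposition $\mathbf{y} = \widetilde{\mathbf{y}} + \mathbf{p}$ with $\widetilde{\mathbf{y}} \in \widetilde{Y}(\tau,t,\mathbf{x})$ and $\mathbf{p} \in P$. The natural decomposition comes from splitting the integral defining $\mathbf{J}$ at time $t+\tau$:
\begin{displaymath}
\mathbf{y} = \mathbf{J}(t,T,\mathbf{x},\mathbf{u}(\cdot)) = \mathbf{J}(t,t+\tau,\mathbf{x},\mathbf{u}(\cdot)) + \mathbf{J}(t+\tau,T,\mathbf{x}_\tau,\mathbf{u}(\cdot)),
\end{displaymath}
where $\mathbf{x}_\tau := \mathbf{x}(t+\tau;t,\mathbf{x},\mathbf{u}(\cdot))$. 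The second summand lies in $Y(t+\tau,\mathbf{x}_\tau)$ by construction.

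The central observation is that $Y(t+\tau,\mathbf{x}_\tau)$ is bounded by $M_{\mathbf{L}}T$ (from (\ref{eq:Lbounded})), so its closure is a closed and bounded subset of $\mathbf{R}^p$, hence compact, i.e., $\mathrm{cl}(Y(t+\tau,\mathbf{x}_\tau)) \in \mathcal{K}$. By Corollary~\ref{cor:extstabcompactset}, this set is externally stable, which (combined with the definition $V(t+\tau,\mathbf{x}_\tau) = \mathcal{E}(\mathrm{cl}(Y(t+\tau,\mathbf{x}_\tau)),P)$) gives
\begin{displaymath}
\mathrm{cl}(Y(t+\tau,\mathbf{x}_\tau)) \subset V(t+\tau,\mathbf{x}_\tau) + P.
\end{displaymath}
In particular, $\mathbf{J}(t+\tau,T,\mathbf{x}_\tau,\mathbf{u}(\cdot)) = \mathbf{v} + \mathbf{p}$ for some $\mathbf{v} \in V(t+\tau,\mathbf{x}_\tau)$ and some $\mathbf{p} \in P$.

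Substituting back yields
\begin{displaymath}
\mathbf{y} = \bigl(\mathbf{J}(t,t+\tau,\mathbf{x},\mathbf{u}(\cdot)) + \mathbf{v}\bigr) + \mathbf{p},
\end{displaymath}
where the bracketed term is an element of $\widetilde{Y}(\tau,t,\mathbf{x})$ (with the same control $\mathbf{u}(\cdot)$) and $\mathbf{p} \in P$. This establishes $\mathbf{y} \in \widetilde{Y}(\tau,t,\mathbf{x}) + P$, proving the lemma. There is no serious obstacle: the only subtle point is recognizing that the closure is needed only to invoke compactness and external stability, while the specific element $\mathbf{J}(t+\tau,T,\mathbf{x}_\tau,\mathbf{u}(\cdot))$ already lies in $Y(t+\tau,\mathbf{x}_\tau)$ itself, so no approximation argument is required.
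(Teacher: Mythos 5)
Your proof is correct and follows essentially the same route as the paper's: split the cost integral at $t+\tau$, invoke external stability of $\mathrm{cl}(Y(t+\tau,\mathbf{x}_\tau))$ via Corollary~\ref{cor:extstabcompactset} to write the tail cost as an element of $V(t+\tau,\mathbf{x}_\tau)$ plus an element of $P$, and regroup. The only difference is presentational: you make explicit the remark that the tail cost already lies in $Y(t+\tau,\mathbf{x}_\tau)$ itself, so the closure enters only to justify external stability, which the paper leaves implicit.
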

\begin{proof} Let $\mathbf{y} \in
Y(t,\mathbf{x})$. We can write
$\mathbf{y} = \mathbf{J}(t,t+\tau,\mathbf{x},\mathbf{u}(\cdot)) +
\mathbf{\widetilde{y}}$ with $\widetilde{\mathbf{y}} \in Y(t+\tau,\mathbf{x}(t+\tau;t,\mathbf{x},\mathbf{u}(\cdot)))$.
As $\mathrm{cl}(Y(t+\tau,\mathbf{x}(t+\tau;t,\mathbf{x},\mathbf{u}(\cdot))))$ is externally
stable (Corollary \ref{cor:extstabcompactset}), there exist $\mathbf{\widetilde{y}}^* \in
V(t+\tau,\mathbf{x}(t+\tau;t,\mathbf{x},\mathbf{u}(\cdot)))$ and $\mathbf{d} \in P$
such that $\mathbf{\widetilde{y}} = \mathbf{\widetilde{y}}^* + \mathbf{d}$. Therefore,
\begin{displaymath}
\mathbf{y} =
\mathbf{J}(t,t+\tau,\mathbf{x},\mathbf{u}(\cdot)) +
\mathbf{\widetilde{y}}^*  + \mathbf{d},
\end{displaymath}
with $\mathbf{J}(t,t+\tau,\mathbf{x},\mathbf{u}(\cdot)) +
\mathbf{\widetilde{y}}^* \in \widetilde{Y}(\tau,t,\mathbf{x})$.
Hence, $\mathbf{y} \in
\widetilde{Y}(\tau,t,\mathbf{x})+P$.  \end{proof}
\newline
\begin{lemma} \label{lem:dp3}
$\mathrm{cl}(\widetilde{Y}(\tau,t,\mathbf{x})+P) \subset \mathrm{cl}(\widetilde{Y}(\tau,t,\mathbf{x}))+P$.
\end{lemma}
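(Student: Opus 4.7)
The plan is to prove the inclusion by a direct sequential argument that exploits two facts: the boundedness of $\widetilde{Y}(\tau,t,\mathbf{x})$ (stated in its definition just above Proposition~\ref{th:dp}, and which follows from the bound $M_\mathbf{L}\tau$ on $\mathbf{J}(t,t+\tau,\mathbf{x},\mathbf{u}(\cdot))$ and the boundedness of $Y(t+\tau,\cdot)$ by $M_\mathbf{L}T$), and the closedness of the ordering cone $P$.

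Fix $\mathbf{z}\in\mathrm{cl}(\widetilde{Y}(\tau,t,\mathbf{x})+P)$. By definition of the closure, there exist sequences $\mathbf{y}_k\in\widetilde{Y}(\tau,t,\mathbf{x})$ and $\mathbf{p}_k\in P$ with $\mathbf{y}_k+\mathbf{p}_k\to\mathbf{z}$. The sequence $\mathbf{y}_k$ is bounded because $\widetilde{Y}(\tau,t,\mathbf{x})$ is bounded, and $\mathbf{y}_k+\mathbf{p}_k$ is bounded because it converges; hence $\mathbf{p}_k=(\mathbf{y}_k+\mathbf{p}_k)-\mathbf{y}_k$ is also bounded. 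By the Bolzano--Weierstrass theorem, I pass to subsequences (not relabeled) so that $\mathbf{y}_k\to\mathbf{y}$ and $\mathbf{p}_k\to\mathbf{p}$ for some $\mathbf{y},\mathbf{p}\in\mathbf{R}^p$.

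Then $\mathbf{y}\in\mathrm{cl}(\widetilde{Y}(\tau,t,\mathbf{x}))$ by construction, and $\mathbf{p}\in P$ because $P$ is closed (recall that in \S\ref{ss:opti} we assume $P$ is a pointed closed convex cone with nonempty interior). Passing to the limit in $\mathbf{y}_k+\mathbf{p}_k\to\mathbf{z}$ gives $\mathbf{z}=\mathbf{y}+\mathbf{p}\in\mathrm{cl}(\widetilde{Y}(\tau,t,\mathbf{x}))+P$, as required.

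There is really no obstacle here: the only substantive ingredient is the boundedness of $\widetilde{Y}(\tau,t,\mathbf{x})$, which upgrades the generic inclusion $\mathrm{cl}(A+B)\supseteq\mathrm{cl}(A)+B$ (valid whenever $B$ is closed) to the reverse inclusion needed here. Without boundedness the inclusion can fail, but the uniform bounds \eqref{eq:fbounded} and \eqref{eq:Lbounded} on $\mathbf{f}$ and $\mathbf{L}$ supply exactly what is needed.
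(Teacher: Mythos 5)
Your proof is correct and is exactly the argument the paper has in mind: the paper's proof simply states that the inclusion follows from the boundedness of $\mathrm{cl}(\widetilde{Y}(\tau,t,\mathbf{x}))$ and the closedness of $P$, and your sequential compactness argument spells out precisely those two ingredients. (Only a side remark: the ``generic'' inclusion $\mathrm{cl}(A)+B\subset\mathrm{cl}(A+B)$ you mention actually holds with no closedness assumption on $B$, but this does not affect your proof.)
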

\begin{proof} This is a consequence of the facts that
the set $\mathrm{cl}(\widetilde{Y}(\tau,t,\mathbf{x}))$
is bounded and $P$ is closed.  \end{proof}
\newline \newline We can now proceed with the proof of Proposition \ref{th:dp}.
\newline
\begin{proof}[Proposition \ref{th:dp}]
Apply Lemma \ref{lem:dp4} with $K_1 =
\mathrm{cl}(\widetilde{Y}(\tau,t,\mathbf{x}))$ and $K_2 =
\mathrm{cl}(Y(t,\mathbf{x}))$. The sets
$\mathrm{cl}(\widetilde{Y}(\tau,t,\mathbf{x}))$ and
$\mathrm{cl}(Y(t,\mathbf{x}))$ are compact. The inclusion $K_1
\subset K_2$ comes from Lemma \ref{lem:dp1}, while the inclusion
$K_2 \subset K_1 + P$ comes from Lemmas \ref{lem:dp2} and~\ref{lem:dp3}. \end{proof} \newline
%\begin{remark} \label{rem:casetauegal0}
%Does the dynamic programming equation \emph{(\ref{eq:dpp})} remain valid when $\tau = 0$? In other
%words, do we have
%$$  V(t,\mathbf{x}) = \mathcal{E} \bigg( \mathrm{cl}\bigg( \bigg \{ V(t,\mathbf{x}), \mathbf{u}(\cdot) \in \mathcal{U}
%\bigg \}\bigg),D\bigg) = \mathcal{E} \bigg( \mathrm{cl}\bigg(  V(t,\mathbf{x})\bigg),D\bigg)? $$
%The answer is positive, but not straightforward. First, observe that $ \mathrm{cl}(  V(t,\mathbf{x})) \subset
%\mathrm{cl}(  Y(t,\mathbf{x})).$ Moreover, from the external stability property,  $\mathrm{cl}(  Y(t,\mathbf{x})) \subset V(t,\mathbf{x}) + D $, which implies that $\mathrm{cl}(  Y(t,\mathbf{x})) \subset \mathrm{cl}(V(t,\mathbf{x})) + D $ (see Lemma \emph{\ref{lem:dp3}}). Finally, applying Lemma \emph{\ref{lem:dp4}} with $K_1 = \mathrm{cl}(V(t,\mathbf{x}))$ and $K_2 = \mathrm{cl}(Y(t,\mathbf{x}))$ yields the desired result.\newline
%\end{remark}

\begin{remark}
The dynamic programming equation \emph{(\ref{eq:dpp})} obtained in Proposition~\emph{\ref{th:dp}}
is a generalization of the dynamic programming equation obtained for single objective optimal
control problems \emph{\cite{OptConTh8_book,OptConTh5_book}}. Indeed, when $p=1$ and  $P = \mathbf{R}_+$, and
using Remark \emph{\ref{rem:valuefunction1dof}}, both sets in \emph{(\ref{eq:dpp})} contain exactly one element,
so \emph{(\ref{eq:dpp})} is equivalent to
\begin{displaymath}
v(t,\mathbf{x}) =  \inf_{\mathbf{u}(\cdot) \in \mathcal{U}} \int_{t}^{t+\tau}
\mathbf{L}(\mathbf{x}(s;t,\mathbf{x},\mathbf{u}(\cdot)),\mathbf{u}(s)) \ \mathrm{d}s +
v(t+\tau,\mathbf{x}(t+\tau;t,\mathbf{x},\mathbf{u}(\cdot))).
\end{displaymath}
\end{remark}

\section{Generalized contingent solution for (MOC)} \label{s:hjb}

%% TODO: question in the definition of the contingent solution, do we have to assume
%% that the set FL(x) is compact and convex?????? I do not think this is essential.
%% TO BE THOUGHT. However, compactness and convexity is needed to prove that
%% the set valued return function is a contingent solution!!!

The notion of lower Dini solution to the Hamilton-Jacobi Equation for
finite-horizon single objective optimal control problems was introduced
in \cite[p.~454]{OptConTh5_book} (see also \cite[p.~127]{OptConTh8_book}).
In this section, using the concept of contingent derivative for set-valued maps,
we extend this notion to (MOC). We call this extended notion generalized contingent
solution for (MOC).\\

\subsection{Definition}

Our definition of generalized contingent solution for (MOC) assumes
set-valued maps of a particular type, as described in Definition \ref{def:extmap}.
This assumption will be used in Corollaries \ref{cor:uniqueness}-\ref{cor:conclusion}
to state that the set-valued return function $V$ is the unique  generalized contingent
solution for (MOC).\\

\begin{definition} \label{def:extmap}
A set-valued map $W$  from $I \times \mathbf{R}^n$ to $\mathbf{R}^p$ is said to be an extremal
element map if, for all $(t,\mathbf{x}) \in I \times \mathbf{R}^n$, for all $\mathbf{y} \in W(t,\mathbf{x})$,\\
%the sets $W(t,\mathbf{x}) + P$ are closed,  \\
\begin{equation} \label{eq:property1}  W(t,\mathbf{x}) \cap (\mathbf{y }- P) = \{\mathbf{y}\},\end{equation}
and
\begin{equation} \label{eq:property2}  W(t,\mathbf{x}) \cap (\mathbf{y }+ P) = \{\mathbf{y}\}.\end{equation}
\end{definition}

Let  $(\mathrm{FL})(\mathbf{x}) = \mathrm{cl}(\mathrm{co}( \{(\mathbf{f}(\mathbf{x},\mathbf{u}),\mathbf{L}(\mathbf{x},\mathbf{u})), \ \mathbf{u} \in U\})),$
where $\mathrm{co}(S)$ denotes the convex hull of the set $S$.\\

\begin{definition} \label{def:solution}
An extremal element map $W$ from $I \times \mathbf{R}^n$ to $\mathbf{R}^p$ is said to be a generalized
contingent solution for \emph{(MOC)} if:\\
\begin{itemize}
  \item For all $(t,\mathbf{x}) \in [0,T) \times \mathbf{R}^n$, for all $\mathbf{y} \in W(t,\mathbf{x}),$
there exists $(\mathbf{\overline{f}},\mathbf{\overline{L}}) \in (\mathrm{FL})(\mathbf{x})$ such that
\begin{equation} \label{eq:vfirst}
 -\mathbf{\overline{L}} \in DW_\uparrow((t,\mathbf{x},\mathbf{y});(1,\mathbf{\overline{f}})).
\end{equation}
  \item For all $(t,\mathbf{x}) \in (0,T] \times \mathbf{R}^n$, for all $\mathbf{y} \in W(t,\mathbf{x}),$
and for all $(\mathbf{f},\mathbf{L}) \in (\mathrm{FL})(\mathbf{x})$,
\begin{equation} \label{eq:vsecond}
 \mathbf{L} \in DW_\uparrow((t,\mathbf{x},\mathbf{y});(-1,-\mathbf{f})).
\end{equation}
\item For all $\mathbf{x} \in  \mathbf{R}^n$,
\begin{equation} \label{eq:tercon1}
W(T,\mathbf{x}) = \{\mathbf{0}\}.
\end{equation}
\end{itemize}
\end{definition}

\subsection{A reformulation for (\ref{eq:vfirst})}

%% Question: can we include the boundary points, i.e., t = 0 and t = T.
%% I believe so, and it would be convenient. But for now, leave it as is
When $W$ is Lipschitz around $(t,\mathbf{x}) \in (0,T) \times \mathbf{R}^n$ and
$\mathbf{y} \in W(t,\mathbf{x})$ is a properly minimal element, i.e.,
$\mathbf{y} \in \mathcal{PE}(W(t,\mathbf{x}),P)$, there exists a more compact formulation for (\ref{eq:vfirst}).\\

\begin{proposition} \label{prop:super}
Let $W$ be a generalized contingent solution for \emph{(MOC)}, $(t,\mathbf{x}) \in [0,T) \times \mathbf{R}^n$,
and $\mathbf{y} \in W(t,\mathbf{x})$. Assume that $W$ is Lipschitz around $(t,\mathbf{x})$ and that $\mathbf{y} \in \mathcal{PE}(W(t,\mathbf{x}),P)$.
Then we have:
\begin{equation} \label{eq:vfirst_ext}
\mathbf{0} \in \mathcal{E}\bigg(\mathrm{cl}\bigg( \bigg\{\mathbf{L} + D_\uparrow W((t,\mathbf{x},\mathbf{y});(1,\mathbf{f})), \ (\mathbf{f},\mathbf{L}) \in (\mathrm{FL})(\mathbf{x}) \bigg\} \bigg),P\bigg)  + P,
\end{equation}
\end{proposition}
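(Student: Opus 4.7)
The plan is to reduce~(\ref{eq:vfirst_ext}) to two successive applications of external stability (Proposition~\ref{prop:exter}): first on the slice $DW_\uparrow((t,\mathbf{x},\mathbf{y});(1,\bar{\mathbf{f}}))$, so as to replace the contingent derivative by the contingent epiderivative $D_\uparrow W$; then on the closure of
\[
S = \bigg\{\mathbf{L}+D_\uparrow W((t,\mathbf{x},\mathbf{y});(1,\mathbf{f})),\ (\mathbf{f},\mathbf{L})\in (\mathrm{FL})(\mathbf{x})\bigg\},
\]
in order to promote an intermediate conclusion of the form $\mathbf{0}\in S+P$ into $\mathbf{0}\in \mathcal{E}(\mathrm{cl}(S),P)+P$.

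First I would invoke~(\ref{eq:vfirst}) to pick $(\bar{\mathbf{f}},\bar{\mathbf{L}})\in(\mathrm{FL})(\mathbf{x})$ with $-\bar{\mathbf{L}}\in DW_\uparrow((t,\mathbf{x},\mathbf{y});(1,\bar{\mathbf{f}}))$, so that $\mathbf{0}\in \bar{\mathbf{L}}+DW_\uparrow((t,\mathbf{x},\mathbf{y});(1,\bar{\mathbf{f}}))$, and then show that this slice is externally stable. Closedness is built in. For $P$-boundedness, let $h_k\to 0^+$ and $\mathbf{w}_k\in DW_\uparrow((t,\mathbf{x},\mathbf{y});(1,\bar{\mathbf{f}}))$ satisfy $h_k\mathbf{w}_k\to \mathbf{v}\in -P$; since each triple $(1,\bar{\mathbf{f}},\mathbf{w}_k)$ lies in the closed cone $T_{\mathrm{Graph}(W_\uparrow)}((t,\mathbf{x},\mathbf{y}))$, scaling by $h_k$ and taking the limit delivers $(0,0,\mathbf{v})\in T_{\mathrm{Graph}(W_\uparrow)}((t,\mathbf{x},\mathbf{y}))$. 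The Lipschitz hypothesis, transferred from $W$ to $W_\uparrow$, then identifies $DW_\uparrow((t,\mathbf{x},\mathbf{y});(0,0))$ with $T_{W(t,\mathbf{x})+P}(\mathbf{y})$, placing $\mathbf{v}$ in $T_{W(t,\mathbf{x})+P}(\mathbf{y})\cap (-P)$; proper minimality of $\mathbf{y}$ forces $\mathbf{v}=\mathbf{0}$. Proposition~\ref{prop:exter} now supplies $\mathbf{w}^\ast\in D_\uparrow W((t,\mathbf{x},\mathbf{y});(1,\bar{\mathbf{f}}))$ with $-\bar{\mathbf{L}}\in\mathbf{w}^\ast+P$, whence $\mathbf{0}\in \bar{\mathbf{L}}+\mathbf{w}^\ast+P\subset S+P$.

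The conclusion then follows by running essentially the same recession argument on $\mathrm{cl}(S)$. Given $h_k\to 0^+$ and $\mathbf{s}_k\in\mathrm{cl}(S)$ with $h_k\mathbf{s}_k\to\mathbf{v}\in -P$, I would approximate $\mathbf{s}_k$ by some $\mathbf{L}_k+\mathbf{w}_k\in S$ tightly enough that $h_k(\mathbf{s}_k-\mathbf{L}_k-\mathbf{w}_k)\to\mathbf{0}$; compactness of $(\mathrm{FL})(\mathbf{x})$ bounds both $\mathbf{L}_k$ and $\mathbf{f}_k$, so $h_k\mathbf{L}_k\to\mathbf{0}$ and $h_k\mathbf{w}_k\to \mathbf{v}$, and the same ``cone plus limit'' step applied to $(1,\mathbf{f}_k,\mathbf{w}_k)\in T_{\mathrm{Graph}(W_\uparrow)}((t,\mathbf{x},\mathbf{y}))$ together with proper minimality yields $\mathbf{v}=\mathbf{0}$. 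Thus $\mathrm{cl}(S)$ is closed and $P$-bounded, hence externally stable by Proposition~\ref{prop:exter}, and $\mathbf{0}\in S+P\subset \mathrm{cl}(S)+P=\mathcal{E}(\mathrm{cl}(S),P)+P$, which is~(\ref{eq:vfirst_ext}).

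The main obstacle I foresee is establishing the $P$-boundedness of the slice $DW_\uparrow((t,\mathbf{x},\mathbf{y});(1,\bar{\mathbf{f}}))$: because $\mathrm{Graph}(W_\uparrow)$ is stable under adding $(0,0,P)$, this slice is automatically unbounded in the $+P$ direction, so only ``half'' of external stability is available, and both hypotheses of the proposition --- Lipschitzness of $W$ and proper minimality of $\mathbf{y}$ --- do their real work exactly here, through the identification of recession directions with the contingent cone $T_{W(t,\mathbf{x})+P}(\mathbf{y})$.
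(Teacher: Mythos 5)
Your proof is correct and follows essentially the same route as the paper: invoke (\ref{eq:vfirst}) to get $(\overline{\mathbf{f}},\overline{\mathbf{L}})$, then apply external stability (Proposition \ref{prop:exter}) twice --- first to the slice $DW_\uparrow((t,\mathbf{x},\mathbf{y});(1,\overline{\mathbf{f}}))$ to pass to the epiderivative, then to $\mathrm{cl}\big(\{\mathbf{L}+D_\uparrow W((t,\mathbf{x},\mathbf{y});(1,\mathbf{f}))\}\big)$ --- with the $P$-boundedness in both steps coming from the Lipschitz hypothesis and the proper minimality of $\mathbf{y}$. The only (harmless) deviation is that you re-derive the $P$-boundedness facts of Propositions \ref{prop:prop2} and \ref{prop:prop4} inline, by scaling recession sequences inside the closed contingent cone down to the direction $(0,\mathbf{0})$ and using $DW_\uparrow((t,\mathbf{x},\mathbf{y});(0,\mathbf{0}))\subset T_{W(t,\mathbf{x})+P}(\mathbf{y})$, instead of citing those propositions.
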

We complete the proof of Proposition \ref{prop:super} later in this section. Beforehand,
we need to show that the generalized contingent epiderivatives of $W$ at $(t,\mathbf{x},\mathbf{y})$ in
the direction $(1,\mathbf{f})$, i.e., $D_\uparrow W((t,\mathbf{x},\mathbf{y});(1,\mathbf{f}))$
are nonempty. We derive this result in the same general setting as in \S \ref{s:mathprel}.\\

\begin{lemma} \label{lem:lemma1}
Let $S_1,S_2$ be nonempty subsets of $\mathbf{R}^p$. Then,\\
\begin{enumerate}
  \item $(\mathrm{cl}(S_1))^+ = S_1^+$.
  \item If $S_1 \subset S_2$, then $S_1^+ \subset  S_2^+$.
  \item Let $l \geq 0$. If $S_1 \subset S_2 + l\mathbf{B}$ and $S_2$ is $P$-bounded, then $\mathrm{cl}(S_1)$ $P$-bounded. \\
\end{enumerate}
\end{lemma}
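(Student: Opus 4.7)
The plan is to dispatch the three parts in the order (2), (1), (3), since each builds naturally on the previous. Throughout, the guiding principle is that any bounded perturbation of a sequence $\mathbf{y}_k$ disappears after scaling by $h_k \to 0^+$, so the recession cone is insensitive to bounded modifications.

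For (2), the argument is immediate from the definition: given $\mathbf{y} \in S_1^+$, choose $h_k \to 0^+$ and $\mathbf{y}_k \in S_1$ with $h_k \mathbf{y}_k \to \mathbf{y}$. Since $S_1 \subset S_2$, the very same sequences witness $\mathbf{y} \in S_2^+$.

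For (1), the inclusion $S_1^+ \subset (\mathrm{cl}(S_1))^+$ is just (2) applied to $S_1 \subset \mathrm{cl}(S_1)$. For the reverse, take $\mathbf{y} \in (\mathrm{cl}(S_1))^+$ with witnesses $h_k \to 0^+$ and $\mathbf{y}_k \in \mathrm{cl}(S_1)$ such that $h_k \mathbf{y}_k \to \mathbf{y}$. By density, pick $\mathbf{z}_k \in S_1$ with $\|\mathbf{z}_k - \mathbf{y}_k\| \leq h_k$. Then
\begin{displaymath}
\| h_k \mathbf{z}_k - h_k \mathbf{y}_k \| \leq h_k^2 \longrightarrow 0,
\end{displaymath}
so $h_k \mathbf{z}_k \to \mathbf{y}$ as well, which shows $\mathbf{y} \in S_1^+$.

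For (3), I first upgrade (2) to the more useful statement that $S_1 \subset S_2 + l \mathbf{B}$ implies $S_1^+ \subset S_2^+$. Indeed, given $\mathbf{y} \in S_1^+$ with $h_k \mathbf{y}_k \to \mathbf{y}$, decompose $\mathbf{y}_k = \mathbf{z}_k + \mathbf{b}_k$ with $\mathbf{z}_k \in S_2$ and $\|\mathbf{b}_k\| \leq l$; since $h_k \mathbf{b}_k \to \mathbf{0}$, we get $h_k \mathbf{z}_k \to \mathbf{y}$, so $\mathbf{y} \in S_2^+$. Combining this with (1) yields $(\mathrm{cl}(S_1))^+ = S_1^+ \subset S_2^+$, and intersecting with $-P$ together with the $P$-boundedness of $S_2$ gives $(\mathrm{cl}(S_1))^+ \cap -P \subset S_2^+ \cap -P = \{\mathbf{0}\}$, proving that $\mathrm{cl}(S_1)$ is $P$-bounded. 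No step presents a serious obstacle; the only conceptual point worth highlighting is that the $l\mathbf{B}$ perturbation, though of fixed size, is harmless precisely because it is multiplied by scalars tending to zero, which is the same idea that powers (1).
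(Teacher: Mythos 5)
Your proposal is correct and for part (3) it is essentially the paper's own argument: reduce to $S_1^+$ via (1), split each point of $S_1$ into an $S_2$-point plus an $l\mathbf{B}$-perturbation, and note that the perturbation vanishes after multiplication by $h_k\to 0^+$, so the limit lies in $S_2^+\cap -P=\{\mathbf{0}\}$. For parts (1) and (2) the paper simply cites a reference, and your direct arguments (including the density approximation for the closure) are the standard proofs and are sound.
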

\begin{proof}
The proof of (1) and (2) can be found in \cite[p.~9]{NonlinearMOO3_book}. To prove (3),
assume $S_1 \subset S_2 + l\mathbf{B}$ and let $\mathbf{y} \in (\mathrm{cl}(S_1))^+ \cap -P = S_1^+ \cap -P$ by (1). By definition of the recession cone, there exist  sequences $h_k \rightarrow 0^+$,
$\mathbf{\overline{y}}_k \in S_1$ such that $h_k \mathbf{\overline{y}}_k \rightarrow \mathbf{y}.$ By assumption,
we have $\mathbf{\overline{y}}_k = \mathbf{\widetilde{y}}_k + l \mathbf{y}_k $, where $\mathbf{\widetilde{y}}_k \in S_2$
and $\mathbf{y}_k \in \mathbf{B}.$ Then,
$h_k \mathbf{\widetilde{y}}_k  = h_k \mathbf{\overline{y}}_k - h_k  l \mathbf{y}_k \rightarrow \mathbf{y}.$
Hence, $\mathbf{y} \in S_2^+$ and $\mathbf{y} \in -P$, or $\mathbf{y} \in S_2^+ \cap -P$.
As $S_2$ is assumed to be $P$-bounded, it follows that
 $\mathbf{y} = \mathbf{0}$.
\end{proof} \newline

\begin{proposition} \label{prop:Lipschitz} Let $(\mathbf{x},\mathbf{y}) \in Graph(F)$. Assume that $F$ is Lipschitz
around $\mathbf{x}$ with Lipschitz constant $l$. Then, we have:\\
\begin{enumerate}
  \item $\forall \mathbf{v} \in X, \ DF((\mathbf{x},\mathbf{y});\mathbf{v})\neq \emptyset$.
  \item The set-valued map $DF(\mathbf{x},\mathbf{y})$ is Lipschitz with Lipschitz constant $l$, i.e.,
  $$\forall \mathbf{v_1}, \mathbf{v_2} \in X, \ DF(\mathbf{x},\mathbf{y})(\mathbf{v_1}) \subset DF(\mathbf{x},\mathbf{y})(\mathbf{v_2}) + l \mathbf{B}.$$
  \item $DF_\uparrow((\mathbf{x},\mathbf{y});\mathbf{0}) \subset T_{F_\uparrow(\mathbf{x})}(\mathbf{y}).$
\end{enumerate}
\end{proposition}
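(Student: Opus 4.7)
The three statements all follow from the same underlying mechanism: the Lipschitz inclusion lets us move the graph of $F$ between nearby base points at a linear cost, and the target space $\mathbf{R}^p$ being finite-dimensional gives us convergent subsequences via Bolzano--Weierstrass. I will fix notation: $l$ is the Lipschitz constant and $\mathcal{O}$ the neighborhood from the definition of Lipschitz.

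For (1), given any $\mathbf{v} \in X$, I would take $h_k \to 0^+$ small enough that $\mathbf{x}+h_k\mathbf{v} \in \mathcal{O}$ and use the Lipschitz inclusion $F(\mathbf{x}) \subset F(\mathbf{x}+h_k\mathbf{v}) + l h_k \|\mathbf{v}\| \mathbf{B}$ to obtain $\mathbf{y}_k \in F(\mathbf{x}+h_k\mathbf{v})$ with $\|\mathbf{y}_k-\mathbf{y}\| \le l h_k \|\mathbf{v}\|$. Then $\mathbf{w}_k := (\mathbf{y}_k-\mathbf{y})/h_k$ satisfies $\|\mathbf{w}_k\| \le l\|\mathbf{v}\|$, so a subsequence converges to some $\mathbf{w}$, and the pair $(\mathbf{v},\mathbf{w}_k) \to (\mathbf{v},\mathbf{w})$ with $(\mathbf{x},\mathbf{y})+h_k(\mathbf{v},\mathbf{w}_k) \in \mathrm{Graph}(F)$ shows $\mathbf{w} \in DF((\mathbf{x},\mathbf{y});\mathbf{v})$.

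For (2), pick $\mathbf{w}_1 \in DF(\mathbf{x},\mathbf{y})(\mathbf{v}_1)$ with witnessing sequences $h_k \to 0^+$ and $(\mathbf{v}_{1,k},\mathbf{w}_{1,k}) \to (\mathbf{v}_1,\mathbf{w}_1)$ and $\mathbf{y}+h_k\mathbf{w}_{1,k} \in F(\mathbf{x}+h_k\mathbf{v}_{1,k})$. Set $\mathbf{v}_{2,k} := \mathbf{v}_{1,k}+\mathbf{v}_2-\mathbf{v}_1 \to \mathbf{v}_2$ and apply the Lipschitz inclusion between $\mathbf{x}+h_k\mathbf{v}_{1,k}$ and $\mathbf{x}+h_k\mathbf{v}_{2,k}$ (both in $\mathcal{O}$ for large $k$) to get $\mathbf{y}_{2,k} \in F(\mathbf{x}+h_k\mathbf{v}_{2,k})$ with $\|\mathbf{y}+h_k\mathbf{w}_{1,k}-\mathbf{y}_{2,k}\| \le l h_k\|\mathbf{v}_1-\mathbf{v}_2\|$. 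Setting $\mathbf{w}_{2,k} := (\mathbf{y}_{2,k}-\mathbf{y})/h_k$ gives $\|\mathbf{w}_{1,k}-\mathbf{w}_{2,k}\| \le l\|\mathbf{v}_1-\mathbf{v}_2\|$, so $\mathbf{w}_{2,k}$ is bounded and a subsequence converges to some $\mathbf{w}_2 \in DF(\mathbf{x},\mathbf{y})(\mathbf{v}_2)$ with $\|\mathbf{w}_1-\mathbf{w}_2\| \le l \|\mathbf{v}_1-\mathbf{v}_2\|$ by passing to the limit.

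For (3), take $\mathbf{w} \in DF_\uparrow((\mathbf{x},\mathbf{y});\mathbf{0})$ with witnessing $h_k \to 0^+$, $(\mathbf{v}_k,\mathbf{w}_k) \to (\mathbf{0},\mathbf{w})$ and $\mathbf{y}+h_k\mathbf{w}_k = \mathbf{y}_k+\mathbf{d}_k$ for some $\mathbf{y}_k \in F(\mathbf{x}+h_k\mathbf{v}_k)$ and $\mathbf{d}_k \in P$. For $k$ large, the Lipschitz inclusion yields $\overline{\mathbf{y}}_k \in F(\mathbf{x})$ with $\|\mathbf{y}_k-\overline{\mathbf{y}}_k\| \le l h_k\|\mathbf{v}_k\|$. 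Defining $\mathbf{b}_k := (\mathbf{y}_k-\overline{\mathbf{y}}_k)/h_k$ (so $\|\mathbf{b}_k\| \le l\|\mathbf{v}_k\| \to 0$), rearrangement gives $\mathbf{y}+h_k(\mathbf{w}_k-\mathbf{b}_k) = \overline{\mathbf{y}}_k + \mathbf{d}_k \in F(\mathbf{x})+P = F_\uparrow(\mathbf{x})$. Since $\mathbf{w}_k-\mathbf{b}_k \to \mathbf{w}$, this is exactly the defining condition placing $\mathbf{w}$ in $T_{F_\uparrow(\mathbf{x})}(\mathbf{y})$.

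No single step is conceptually hard; the only technical care needed is ensuring all base points lie in the Lipschitz neighborhood $\mathcal{O}$ (which is automatic for large $k$) and invoking finite-dimensionality of $\mathbf{R}^p$ for the extraction of convergent subsequences in (1) and (2). The bookkeeping in (3), where one must split the excess into a Lipschitz-generated part $h_k\mathbf{b}_k$ and a $P$-direction part $\mathbf{d}_k$ in order to re-land in $F(\mathbf{x})+P$, is the subtlest piece.
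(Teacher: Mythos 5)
Your proposal is correct and follows essentially the same route as the paper: for part (3) your bookkeeping (splitting off the Lipschitz error $h_k\mathbf{b}_k$ and absorbing it into the converging sequence so as to land back in $F(\mathbf{x})+P$) is exactly the paper's argument, while for parts (1) and (2) the paper simply cites Aubin--Frankowska and you have supplied the standard proofs that appear there. No gaps.
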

\begin{proof}
The proof of (1) and (2) can be found in \cite[p.~186]{SVA_book}. To prove (3), let
$\mathbf{w} \in DF_\uparrow((\mathbf{x},\mathbf{y});\mathbf{0}).$
By definition of the contingent derivative, there exist sequences
$h_k\rightarrow 0^+$, $\mathbf{v}_k\rightarrow  0$, and $\mathbf{w}_k\rightarrow
\mathbf{w}$ such that
$$ \mathbf{y} + h_k \mathbf{w}_k \in F(\mathbf{x} + h_k \mathbf{v}_k) + P. $$
Using the Lipschitz property, we get $$ \mathbf{y} + h_k \mathbf{w}_k \in F(\mathbf{x}) + l
h_k \|\mathbf{v}_k\| \mathbf{B} + P.$$
Hence,  $$ \mathbf{y} + h_k (\mathbf{w}_k - l \|\mathbf{v}_k\| \mathbf{y}_k)
\in F(\mathbf{x}) + P$$ for some sequence $\mathbf{y}_k \in \mathbf{B}.$ We have $\mathbf{w}_k - l \|\mathbf{v}_k\| \mathbf{y}_k
 \rightarrow \mathbf{w},$ which shows
that $\mathbf{w} \in T_{F_\uparrow(\mathbf{x})}(\mathbf{y})$.
\end{proof}\\ \\
The conclusion of Proposition \ref{prop:Lipschitz}(1) might fail if $F$ is not Lipschitz, but simply continuous.
Take for example the set-valued map $F$ defined from $\mathbf{R}$ to $\mathbf{R}$ by $F(x) = \{x^{1/3}\}$ and $P = \mathbf{R}_+$. Then, it is
easy to check that
$DF((0,0);v) = \emptyset \ \mathrm{when} \  v \neq 0.$\\

\begin{proposition} \label{prop:prop2}
Let $(\mathbf{x},\mathbf{y}) \in Graph(F)$. Assume that $F$ is Lipschitz
around $\mathbf{x}$ and that $\mathbf{y} \in \mathcal{PE}(F(\mathbf{x}),P)$. Then,
\begin{equation} \label{eq:properlyminimalelements}
DF_\uparrow((\mathbf{x},\mathbf{y});\mathbf{0}) \cap -P = \{\mathbf{0}\}.
\end{equation}
Moreover, $\forall \mathbf{v} \in X$, the sets $DF_\uparrow((\mathbf{x},\mathbf{y});\mathbf{v})$ and
$DF((\mathbf{x},\mathbf{y});\mathbf{v})$ are $P$-bounded.
\end{proposition}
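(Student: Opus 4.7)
The plan is in two stages. First I would establish (\ref{eq:properlyminimalelements}) by combining the proper minimality hypothesis with Proposition \ref{prop:Lipschitz}(3); then I would derive $P$-boundedness of $DF_\uparrow((\mathbf{x},\mathbf{y});\mathbf{v})$ in an arbitrary direction by reducing, via a Lipschitz estimate and Lemma \ref{lem:lemma1}(3), to the case $\mathbf{v} = \mathbf{0}$, which comes for free from the first stage. The corresponding statement for $DF$ then follows by comparison, since $\mathrm{Graph}(F) \subset \mathrm{Graph}(F_\uparrow)$. A preliminary observation used throughout is that $F_\uparrow$ inherits the Lipschitz property of $F$ with the same constant: adding $P$ to both sides of $F(\mathbf{x_1}) \subset F(\mathbf{x_2}) + l\|\mathbf{x_1} - \mathbf{x_2}\|\mathbf{B}$ yields $F_\uparrow(\mathbf{x_1}) \subset F_\uparrow(\mathbf{x_2}) + l\|\mathbf{x_1} - \mathbf{x_2}\|\mathbf{B}$. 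Consequently every part of Proposition \ref{prop:Lipschitz} may also be applied with $F$ replaced by $F_\uparrow$ (noting that $(\mathbf{x},\mathbf{y}) \in \mathrm{Graph}(F_\uparrow)$ since $\mathbf{0} \in P$).

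For (\ref{eq:properlyminimalelements}): The assumption $\mathbf{y} \in \mathcal{PE}(F(\mathbf{x}),P)$ combined with (\ref{eq:propminelem}) and the identity $F_\uparrow(\mathbf{x}) = F(\mathbf{x}) + P$ yields $T_{F_\uparrow(\mathbf{x})}(\mathbf{y}) \cap -P = \{\mathbf{0}\}$. Proposition \ref{prop:Lipschitz}(3) provides the inclusion $DF_\uparrow((\mathbf{x},\mathbf{y});\mathbf{0}) \subset T_{F_\uparrow(\mathbf{x})}(\mathbf{y})$, so intersecting with $-P$ gives $DF_\uparrow((\mathbf{x},\mathbf{y});\mathbf{0}) \cap -P \subset \{\mathbf{0}\}$; the reverse inclusion is immediate since $\mathbf{0}$ lies in every contingent cone.

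For the $P$-boundedness assertions: The set $DF_\uparrow((\mathbf{x},\mathbf{y});\mathbf{0})$ is a closed cone (the contingent derivative is closed-valued, and if $(\mathbf{0},\mathbf{w})$ lies in the contingent cone to the graph then so does $\lambda(\mathbf{0},\mathbf{w})$ for every $\lambda \geq 0$), and any closed cone coincides with its own extended recession cone, so (\ref{eq:properlyminimalelements}) states precisely that $DF_\uparrow((\mathbf{x},\mathbf{y});\mathbf{0})$ is $P$-bounded. For arbitrary $\mathbf{v}$, applying Proposition \ref{prop:Lipschitz}(2) to $F_\uparrow$ with $\mathbf{v_1} = \mathbf{v}$ and $\mathbf{v_2} = \mathbf{0}$ yields $DF_\uparrow((\mathbf{x},\mathbf{y});\mathbf{v}) \subset DF_\uparrow((\mathbf{x},\mathbf{y});\mathbf{0}) + l\|\mathbf{v}\|\mathbf{B}$, and Lemma \ref{lem:lemma1}(3) then shows $\mathrm{cl}(DF_\uparrow((\mathbf{x},\mathbf{y});\mathbf{v}))$ is $P$-bounded; closedness of the contingent derivative lets us drop the closure. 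Finally, the pointwise inclusion $DF((\mathbf{x},\mathbf{y});\mathbf{v}) \subset DF_\uparrow((\mathbf{x},\mathbf{y});\mathbf{v})$ (immediate from $\mathbf{0} \in P$) combined with Lemma \ref{lem:lemma1}(2) transfers $P$-boundedness to $DF((\mathbf{x},\mathbf{y});\mathbf{v})$. I do not foresee a genuine obstacle; the only piece of bookkeeping that really matters is the observation that $F_\uparrow$ is Lipschitz whenever $F$ is, after which every ingredient needed is already available.
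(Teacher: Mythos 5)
Your proposal is correct and follows essentially the same route as the paper: Proposition \ref{prop:Lipschitz}(3) plus the definition of proper minimality for (\ref{eq:properlyminimalelements}), the closed-cone-equals-its-recession-cone observation to get $P$-boundedness at $\mathbf{v}=\mathbf{0}$, Proposition \ref{prop:Lipschitz}(2) with Lemma \ref{lem:lemma1}(3) for general $\mathbf{v}$, and the inclusion $DF \subset DF_\uparrow$ for the final claim. The preliminary remark that $F_\uparrow$ inherits the Lipschitz property is exactly the bookkeeping the paper also relies on.
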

\begin{proof}
As $F$ is Lipschitz, $F_\uparrow$ is also Lipschitz. Therefore, from
Proposition \ref{prop:Lipschitz}(1), $DF_\uparrow((\mathbf{x},\mathbf{y});\mathbf{0})\neq \emptyset$.
Now, let $\mathbf{w} \in DF_\uparrow((\mathbf{x},\mathbf{y});\mathbf{0}) \cap -P.$ From
Proposition \ref{prop:Lipschitz}(3), $DF_\uparrow((\mathbf{x},\mathbf{y});\mathbf{0}) \subset T_{F_\uparrow(\mathbf{x})}(\mathbf{y})$. Hence, $\mathbf{w} \in T_{F_\uparrow(\mathbf{x})}(\mathbf{y}) \cap -P.$
By assumption, $\mathbf{y} \in \mathcal{PE}(F(\mathbf{x}),P)$. Therefore, $\mathbf{w} =0$.\\ \\ To conclude that the sets $DF_\uparrow((\mathbf{x},\mathbf{y});\mathbf{v})$ are $P$-bounded, observe
first that the set  $DF_\uparrow((\mathbf{x},\mathbf{y});\mathbf{0})$ is $P$-bounded. Indeed,
$DF_\uparrow((\mathbf{x},\mathbf{y});\mathbf{0})$ is a closed cone, hence $DF_\uparrow((\mathbf{x},\mathbf{y});\mathbf{0})^+
 = DF_\uparrow((\mathbf{x},\mathbf{y});\mathbf{0})$, or $DF_\uparrow((\mathbf{x},\mathbf{y});\mathbf{0})^+ \cap -P =
 DF_\uparrow((\mathbf{x},\mathbf{y});\mathbf{0}) \cap -P = \{\mathbf{0}\}.$ Now, from Proposition \ref{prop:Lipschitz}(2), the set-valued map  $DF_\uparrow(\mathbf{x},\mathbf{y})$ is Lipschitz. The conclusion therefore follows  from Lemma \ref{lem:lemma1}(3). $P$-boundedness of the sets $DF((\mathbf{x},\mathbf{y});\mathbf{v})$ is readily obtained
 from the inclusion $DF((\mathbf{x},\mathbf{y});\mathbf{v})+P \subset DF_\uparrow((\mathbf{x},\mathbf{y});\mathbf{v}).$
\end{proof} \\

The assumption that $\mathbf{y}$ is a properly minimal element is essential in obtaining that
the sets $DF_\uparrow((\mathbf{x},\mathbf{y});\mathbf{v})$ are $P$-bounded. If $\mathbf{y}$ is
only assumed to be a minimal element, then, using Lemma \ref{lem:final} and
Proposition \ref{prop:Lipschitz}, instead of (\ref{eq:properlyminimalelements}), we would get
$DF_\uparrow((\mathbf{x},\mathbf{y});\mathbf{0}) \cap -\mathrm{int}(P) = \emptyset.$ Therefore,
when $\mathbf{y}$ is only a minimal element,  $DF_\uparrow((\mathbf{x},\mathbf{y});\mathbf{0})$ is not necessarily
$P$-bounded, as illustrated by the following example. Let $X = \mathbf{R}$, $p=2$, and
 $P=\mathbf{R}^2_+$. Let $S = \{ (\mathbf{y}_1,\mathbf{y}_1^2) \ \mathrm{if} \ \mathbf{y}_1 \leq 0\} \cup \
\{(\mathbf{y}_1,-\mathbf{y}_1) \ \mathrm{if} \ \mathbf{y}_1 > 0\}$. Take $(0,0) \in S$. We have that $(0,0)$ is
a minimal element. Moreover,
$T_{S+P}(0,0) =  \{ (\mathbf{w}_1,\mathbf{w}_2), \ \mathbf{w}_2 \geq 0 \} \cup \ \{(\mathbf{w}_1,\mathbf{w}_2), \ \mathbf{w}_2 \geq -\mathbf{w}_1\} $
Hence, $T_{S+P}(0,0) \cap -\mathrm{int}(P) = \emptyset$, $ T_{S+P}(0,0) \cap -P  = \{(\mathbf{w_1},0), \ \mathbf{w_1} \leq 0\}\neq \{(0,0)\},$
and therefore $(0,0)$ is not a properly minimal element. Now, define the constant set-valued map: $\forall x \in \mathbf{R}, \ F(x) = S.$
The set-valued map $F$ is obviously
Lipschitz around all $x \in \mathbf{R}$, and it is not hard to show that
$\forall x \in \mathbf{R}, \ DF_\uparrow((x,0,0);0) = T_{S+P}(0,0)$. Hence, the set
$DF_\uparrow((x,0,0);0)$ is not $P$-bounded.\\

Proposition \ref{prop:prop2} implies that the sets $D_\uparrow W((t,\mathbf{x},\mathbf{y});(1,\mathbf{f}))$
appearing in  (\ref{eq:vfirst_ext}) are nonempty. To see this, note from
Proposition \ref{prop:exter} that it suffices to show that the sets $D W_\uparrow((t,\mathbf{x},\mathbf{y});(1,\mathbf{f}))$  are $P$-bounded. Applying Proposition \ref{prop:prop2}
to $W$ achieves this.\\

The next step is to show  $P$-boundedness of the set $$\mathrm{cl}\bigg( \bigg\{\mathbf{L} +
D_\uparrow W((t,\mathbf{x},\mathbf{y});(1,\mathbf{f})), \ (\mathbf{f},\mathbf{L})  \in (\mathrm{FL})(\mathbf{x}) \bigg\} \bigg).$$ Again, we derive this result in a general setting.\\

\begin{proposition} \label{prop:prop4}
Let $(\mathbf{x},\mathbf{y}) \in Graph(F)$. Assume that $F$ is Lipschitz
around $\mathbf{x}$ and that $\mathbf{y} \in \mathcal{PE}(F(\mathbf{x}),P)$.
If $S$ is a nonempty compact subset  of $ X \times \mathbf{R}^p$, then the following set is $P$-bounded:
$$\mathrm{cl}\bigg(\displaystyle \bigcup_{(\mathbf{v},\mathbf{w}) \in S} \mathbf{w} + D_\uparrow F((\mathbf{x},\mathbf{y});\mathbf{v})\bigg).$$
\end{proposition}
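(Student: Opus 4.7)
The plan is to reduce the problem to a bounded neighborhood of the single set $DF_\uparrow((\mathbf{x},\mathbf{y});\mathbf{0})$, which is already known to be $P$-bounded by Proposition \ref{prop:prop2}, and then invoke Lemma \ref{lem:lemma1}(3) to conclude.

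First, I would observe that since $F$ is Lipschitz around $\mathbf{x}$ with some constant $l$, the set-valued map $F_\uparrow = F + P$ is Lipschitz around $\mathbf{x}$ with the same constant: adding $P$ to both sides of the defining inclusion for $F$ preserves it. Hence Proposition \ref{prop:Lipschitz}(2), applied to $F_\uparrow$, yields that $DF_\uparrow(\mathbf{x},\mathbf{y})$ is Lipschitz (reading the conclusion with the missing $\|\mathbf{v_1}-\mathbf{v_2}\|$ factor). Specializing to $\mathbf{v_2} = \mathbf{0}$ gives the key inclusion
\begin{displaymath}
DF_\uparrow((\mathbf{x},\mathbf{y});\mathbf{v}) \subset DF_\uparrow((\mathbf{x},\mathbf{y});\mathbf{0}) + l\|\mathbf{v}\|\,\mathbf{B}
\end{displaymath}
for every $\mathbf{v} \in X$.

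Next, since $S$ is compact, set $R = \sup\{\|\mathbf{v}\| + \|\mathbf{w}\| \,:\, (\mathbf{v},\mathbf{w}) \in S\} < \infty$. Because $D_\uparrow F((\mathbf{x},\mathbf{y});\mathbf{v}) = \mathcal{E}(DF_\uparrow((\mathbf{x},\mathbf{y});\mathbf{v}),P) \subset DF_\uparrow((\mathbf{x},\mathbf{y});\mathbf{v})$ by definition, I would then bound, for each $(\mathbf{v},\mathbf{w}) \in S$,
\begin{displaymath}
\mathbf{w} + D_\uparrow F((\mathbf{x},\mathbf{y});\mathbf{v}) \subset DF_\uparrow((\mathbf{x},\mathbf{y});\mathbf{0}) + (l\|\mathbf{v}\| + \|\mathbf{w}\|)\,\mathbf{B} \subset DF_\uparrow((\mathbf{x},\mathbf{y});\mathbf{0}) + M\,\mathbf{B},
\end{displaymath}
where $M = \max(l,1)\,R$. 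Taking the union over $(\mathbf{v},\mathbf{w}) \in S$, the whole set in the proposition is contained in $DF_\uparrow((\mathbf{x},\mathbf{y});\mathbf{0}) + M\,\mathbf{B}$.

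Finally, Proposition \ref{prop:prop2} (which uses the hypothesis $\mathbf{y} \in \mathcal{PE}(F(\mathbf{x}),P)$) gives that $DF_\uparrow((\mathbf{x},\mathbf{y});\mathbf{0})$ is $P$-bounded, so Lemma \ref{lem:lemma1}(3) applied with $S_2 = DF_\uparrow((\mathbf{x},\mathbf{y});\mathbf{0})$ and $S_1$ equal to the union in the statement concludes that its closure is $P$-bounded. The argument is essentially bookkeeping; the only subtlety is making sure the Lipschitz property transfers from $F$ to $F_\uparrow$ and using the proper inclusion $D_\uparrow F \subset DF_\uparrow$ so that the $P$-boundedness estimate on $DF_\uparrow((\mathbf{x},\mathbf{y});\mathbf{0})$ from Proposition \ref{prop:prop2} is applicable.
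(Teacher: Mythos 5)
Your proof is correct, but it is organized differently from the paper's. The paper first replaces $D_\uparrow F$ by $DF_\uparrow$ (via Lemma \ref{lem:lemma1}(1)--(2)) and then unwinds the definition of the recession cone by hand: given $h_k \rightarrow 0^+$ and $\mathbf{\overline{w}}_k \in \mathbf{w}_k + DF_\uparrow((\mathbf{x},\mathbf{y});\mathbf{v}_k)$ with $h_k \mathbf{\overline{w}}_k \rightarrow \mathbf{\overline{w}} \in -P$, it uses compactness of $S$ to extract a convergent subsequence $(\mathbf{v}_k,\mathbf{w}_k) \rightarrow (\mathbf{v},\mathbf{w})$ and the Lipschitz property of $DF_\uparrow(\mathbf{x},\mathbf{y})$ to transfer everything to the single fiber at $\mathbf{v}$, concluding $\mathbf{\overline{w}} \in DF_\uparrow((\mathbf{x},\mathbf{y});\mathbf{v})^+ \cap -P = \{\mathbf{0}\}$ by Proposition \ref{prop:prop2}. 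You instead apply the Lipschitz estimate uniformly to contain the whole union in $DF_\uparrow((\mathbf{x},\mathbf{y});\mathbf{0}) + M\mathbf{B}$ and then delegate the recession-cone computation to Lemma \ref{lem:lemma1}(3), the $P$-boundedness of the fiber at $\mathbf{0}$ being exactly what the proof of Proposition \ref{prop:prop2} establishes first. Both arguments rest on the same two ingredients (Lipschitzness of $DF_\uparrow(\mathbf{x},\mathbf{y})$ from Proposition \ref{prop:Lipschitz}(2) and Proposition \ref{prop:prop2}); yours is slightly more economical in that it needs only boundedness of $S$ rather than compactness, and it reuses Lemma \ref{lem:lemma1}(3) instead of reproving it inline. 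Your reading of Proposition \ref{prop:Lipschitz}(2) with the factor $\|\mathbf{v_1}-\mathbf{v_2}\|$ restored is the right one; the paper itself uses that corrected form in its own proof.
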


\begin{proof}
As $D_\uparrow F((\mathbf{x},\mathbf{y});\mathbf{v}) \subset D F_\uparrow((\mathbf{x},\mathbf{y});\mathbf{v})$,
from Lemma \ref{lem:lemma1}(1)-(2), it is enough to show that  $\bigcup_{(\mathbf{v},\mathbf{w}) \in S}
\mathbf{w} + D F_\uparrow((\mathbf{x},\mathbf{y});\mathbf{v})$ is $P$-bounded.
Let $$\mathbf{\overline{w}} \in \bigg( \displaystyle \bigcup_{(\mathbf{v},\mathbf{w}) \in S}
\mathbf{w} + D F_\uparrow((\mathbf{x},\mathbf{y});\mathbf{v})\bigg)^+\cap -P.$$
By definition of the recession cone, there exist sequences $h_k \rightarrow 0^+$,
$\mathbf{\overline{w}}_k \in \bigcup_{(\mathbf{v},\mathbf{w}) \in S} \mathbf{w} +
D F_\uparrow((\mathbf{x},\mathbf{y});\mathbf{v})$ such
that $h_k \mathbf{\overline{w}}_k \rightarrow \overline{\mathbf{w}}.$
We have $\mathbf{\overline{w}}_k \in \mathbf{w}_k + D F_\uparrow ((\mathbf{x},\mathbf{y});\mathbf{v}_k)$
for some $(\mathbf{v}_k,\mathbf{w}_k) \in S$. As $S$ is compact, by passing to a subsequence if necessary,
we can assume that $(\mathbf{v}_k,\mathbf{w}_k) \rightarrow (\mathbf{v},\mathbf{w}) \in S$.
From Proposition \ref{prop:Lipschitz}, $DF_\uparrow(\mathbf{x},\mathbf{y})$ is Lipschitz; let its constant
be $l$. Hence, we have
$\mathbf{\overline{w}}_k \in \mathbf{w}_k + \mathbf{\widetilde{w}}_k +l \|\mathbf{v}_k -  \mathbf{v}\| \mathbf{B}$ where
$ \mathbf{\widetilde{w}}_k\in D F_\uparrow ((\mathbf{x},\mathbf{y});\mathbf{v})$. It follows that
$h_k  \mathbf{\widetilde{w}}_k \rightarrow \mathbf{\overline{w}} $. Hence, $\mathbf{\overline{w}} \in
D F_\uparrow ((\mathbf{x},\mathbf{y});\mathbf{v})^+ \cap -P$. But, from
Proposition~\ref{prop:prop2}, $D F_\uparrow ((\mathbf{x},\mathbf{y});\mathbf{v})$ is $P$-bounded,
hence $\mathbf{\overline{w}} = \mathbf{0}$.
\end{proof}  \newline

To be able to apply Proposition \ref{prop:prop4} in the setting of Proposition \ref{prop:super},
it suffices to show that the set $(\mathrm{FL})(\mathbf{x})$ is a nonempty compact subset of $\mathbf{R}^{n} \times
\mathbf{R}^{p}$.\\

\begin{lemma} \label{lem:lemma2}
The set $(\mathrm{FL})(\mathbf{x})$ is a nonempty compact subset of $\mathbf{R}^{n} \times
\mathbf{R}^{p}$ for all $\mathbf{x} \in \mathbf{R}^n$.
\end{lemma}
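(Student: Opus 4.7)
The plan is to unwind the definition of $(\mathrm{FL})(\mathbf{x})$ and verify nonemptiness, boundedness, and closedness in turn, relying only on the standing assumptions (continuity of $\mathbf{f}$ and $\mathbf{L}$, nonempty compactness of $U$, and the bounds (\ref{eq:fbounded}) and (\ref{eq:Lbounded})) together with one classical fact from convex analysis.

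First I would set $G(\mathbf{x}) = \{(\mathbf{f}(\mathbf{x},\mathbf{u}),\mathbf{L}(\mathbf{x},\mathbf{u})), \ \mathbf{u} \in U\}$. Since the map $\mathbf{u} \mapsto (\mathbf{f}(\mathbf{x},\mathbf{u}),\mathbf{L}(\mathbf{x},\mathbf{u}))$ is continuous, and $U$ is nonempty and compact, $G(\mathbf{x})$ is the continuous image of a nonempty compact set, hence nonempty and compact in $\mathbf{R}^n \times \mathbf{R}^p$. (Boundedness also follows independently from $\|\mathbf{f}\|\leq M_\mathbf{f}$ and $\|\mathbf{L}\|\leq M_\mathbf{L}$.)

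Next, I would invoke the classical result that in a finite-dimensional Euclidean space the convex hull of a compact set is itself compact (a standard consequence of Carath\'eodory's theorem: every point of $\mathrm{co}(G(\mathbf{x}))$ is a convex combination of at most $n+p+1$ points of $G(\mathbf{x})$, so $\mathrm{co}(G(\mathbf{x}))$ is the continuous image of the compact set $\Delta_{n+p} \times G(\mathbf{x})^{n+p+1}$ under the combination map, where $\Delta_{n+p}$ is the standard simplex). Consequently $\mathrm{co}(G(\mathbf{x}))$ is already compact, hence closed, so taking closure leaves it unchanged:
\[
(\mathrm{FL})(\mathbf{x}) \;=\; \mathrm{cl}(\mathrm{co}(G(\mathbf{x}))) \;=\; \mathrm{co}(G(\mathbf{x})).
\]
Combining the two steps, $(\mathrm{FL})(\mathbf{x})$ is nonempty and compact in $\mathbf{R}^n \times \mathbf{R}^p$.

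There is no real obstacle here; the only nontrivial ingredient is the Carath\'eodory-type statement that convex hulls preserve compactness in finite dimensions, which would otherwise be the bottleneck if we were in an infinite-dimensional setting. Everything else reduces to the elementary fact that continuous images of compact sets are compact.
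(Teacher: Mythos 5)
Your proof is correct, but it takes a different (and somewhat heavier) route than the paper. The paper's argument is a one-liner: $(\mathrm{FL})(\mathbf{x})$ is closed \emph{by definition}, since it is defined as a closure, and it is bounded because the uniform bounds (\ref{eq:fbounded}) and (\ref{eq:Lbounded}) confine the generating set, hence its convex hull and the closure of that hull, to the ball of radius $M_\mathbf{f}+M_\mathbf{L}$ (say); compactness then follows from Heine--Borel, and nonemptiness from $U \neq \emptyset$. You instead prove the stronger fact that $\mathrm{co}(G(\mathbf{x}))$ is already compact: continuity of $\mathbf{u} \mapsto (\mathbf{f}(\mathbf{x},\mathbf{u}),\mathbf{L}(\mathbf{x},\mathbf{u}))$ plus compactness of $U$ gives compactness of $G(\mathbf{x})$, and Carath\'eodory's theorem gives compactness of its convex hull, so the closure in the definition of $(\mathrm{FL})(\mathbf{x})$ is redundant. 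Your argument uses the continuity of $\mathbf{f}$ and $\mathbf{L}$ in the control variable (which is indeed a standing assumption) and a nontrivial convex-analysis fact, whereas the paper's uses only the uniform bounds and the built-in closure; in exchange, yours yields the extra observation that $(\mathrm{FL})(\mathbf{x}) = \mathrm{co}(G(\mathbf{x}))$, which the paper does not claim or need.
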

\begin{proof}
The set $(\mathrm{FL})(\mathbf{x})$ is closed by definition. The boundedness follows from the boundedness
of $\mathbf{f}$, see (\ref{eq:fbounded}), and the boundedness of $\mathbf{L}$, see (\ref{eq:Lbounded}).
\end{proof} \\

We can now proceed with the proof of Proposition \ref{prop:super}.\\
\begin{proof}[Proposition \ref{prop:super}] Assume that (\ref{eq:vfirst}) holds for
some $(t,\mathbf{x}) \in [0,T) \times \mathbf{R}^n$, $\mathbf{y} \in W(t,\mathbf{x}),$
and $(\mathbf{\overline{f}},\mathbf{\overline{L}}) \in (\mathrm{FL})(\mathbf{x})$. As
shown in Proposition \ref{prop:prop2}, the set $DW_\uparrow((t,\mathbf{x},\mathbf{y});(1,\mathbf{\overline{f}}))$
is $P$-bounded, hence externally stable from Proposition~\ref{prop:exter}. Therefore,
(\ref{eq:vfirst}) implies
$$  -\mathbf{\overline{L}} \in D_\uparrow W((t,\mathbf{x},\mathbf{y});(1,\overline{\mathbf{f}})) + P.  $$
Hence,
$$ -P \cap \bigg\{\mathbf{L} + D_\uparrow W((t,\mathbf{x},\mathbf{y});(1,\mathbf{f})), \ (\mathbf{f},\mathbf{L}) \in (\mathrm{FL})(\mathbf{x}) \bigg\} \neq \emptyset,$$
$$ -P \cap \mathrm{cl}\bigg( \bigg\{\mathbf{L} + D_\uparrow W((t,\mathbf{x},\mathbf{y});(1,\mathbf{f})), \ (\mathbf{f},\mathbf{L}) \in (\mathrm{FL})(\mathbf{x}) \bigg\} \bigg) \neq \emptyset,$$
 $$ \mathbf{0} \in \mathrm{cl}\bigg( \bigg\{\mathbf{L} + D_\uparrow W((t,\mathbf{x},\mathbf{y});(1,\mathbf{f})), \ (\mathbf{f},\mathbf{L}) \in (\mathrm{FL})(\mathbf{x}) \bigg\} \bigg)  + P.$$
From Proposition \ref{prop:prop4} together with Lemma \ref{lem:lemma2}, the following set is $P$-bounded: $$\mathrm{cl}\bigg( \bigg\{\mathbf{L} + D_\uparrow W((t,\mathbf{x},\mathbf{y});(1,\mathbf{f})), \ (\mathbf{f},\mathbf{L}) \in (\mathrm{FL})(\mathbf{x}) \bigg\} \bigg).$$
Hence, it is externally stable from Proposition~\ref{prop:exter}. Therefore,
 $$ \mathbf{0} \in \mathcal{E}\bigg(\mathrm{cl}\bigg( \bigg\{\mathbf{L} + D_\uparrow W((t,\mathbf{x},\mathbf{y});(1,\mathbf{f})), \ (\mathbf{f},\mathbf{L}) \in (\mathrm{FL})(\mathbf{x}) \bigg\} \bigg),P\bigg)  + P.$$
\end{proof}\\

It is possible to derive a converse to Proposition \ref{prop:super}. Again, we derive this result in a general setting.\\

\begin{proposition} \label{prop:prop5}
Under the assumptions of Proposition \emph{\ref{prop:prop4}}, assume that
$$ \mathbf{0} \in \mathcal{E}\bigg(\mathrm{cl}\bigg(\displaystyle \bigcup_{(\mathbf{v},\mathbf{w}) \in S} \mathbf{w} + D_\uparrow F((\mathbf{x},\mathbf{y});\mathbf{v})\bigg) \bigg\} \bigg),P\bigg)  + P.$$
Then there exists $(\mathbf{v},\mathbf{w}) \in S$ such that $$-\mathbf{w} \in D F_\uparrow((\mathbf{x},\mathbf{y});\mathbf{v}).$$
\end{proposition}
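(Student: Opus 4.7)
The plan is to unwind the hypothesis through the closure operation, extract a convergent subsequence using the compactness of $S$ together with the Lipschitz regularity of $DF_\uparrow(\mathbf{x},\mathbf{y})$ (guaranteed by Proposition \ref{prop:Lipschitz}(2) applied to $F_\uparrow$, which inherits the Lipschitz property from $F$), and finally restore the missing $\mathbf{p} \in P$ via a simple monotonicity property of $DF_\uparrow$.

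First, I would decompose the hypothesis: write $\mathbf{0} = \mathbf{m} + \mathbf{p}$ with $\mathbf{p} \in P$ and $\mathbf{m} \in \mathcal{E}(\mathrm{cl}(A),P) \subset \mathrm{cl}(A)$, where $A := \bigcup_{(\mathbf{v},\mathbf{w}) \in S}\bigl(\mathbf{w} + D_\uparrow F((\mathbf{x},\mathbf{y});\mathbf{v})\bigr)$. Since $\mathbf{m} = -\mathbf{p}$ lies in $\mathrm{cl}(A)$, there exist sequences $(\mathbf{v}_k,\mathbf{w}_k) \in S$ and $\mathbf{z}_k \in D_\uparrow F((\mathbf{x},\mathbf{y});\mathbf{v}_k) \subset DF_\uparrow((\mathbf{x},\mathbf{y});\mathbf{v}_k)$ with $\mathbf{w}_k + \mathbf{z}_k \to -\mathbf{p}$. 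Using compactness of $S$, I would pass to a subsequence along which $(\mathbf{v}_k,\mathbf{w}_k) \to (\mathbf{v},\mathbf{w}) \in S$, so that $\mathbf{z}_k \to -\mathbf{p} - \mathbf{w}$.

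Next, invoking Proposition \ref{prop:Lipschitz}(2) applied to $F_\uparrow$, the set-valued map $DF_\uparrow(\mathbf{x},\mathbf{y})$ is Lipschitz with some constant $l$, so for each $k$ one can choose $\mathbf{z}'_k \in DF_\uparrow((\mathbf{x},\mathbf{y});\mathbf{v})$ with $\|\mathbf{z}_k - \mathbf{z}'_k\| \leq l\|\mathbf{v}_k - \mathbf{v}\| \to 0$. Hence $\mathbf{z}'_k \to -\mathbf{p} - \mathbf{w}$, and closedness of the contingent derivative set $DF_\uparrow((\mathbf{x},\mathbf{y});\mathbf{v})$ yields $-\mathbf{p} - \mathbf{w} \in DF_\uparrow((\mathbf{x},\mathbf{y});\mathbf{v})$.

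The last step is the observation that $DF_\uparrow((\mathbf{x},\mathbf{y});\mathbf{v}) + P \subset DF_\uparrow((\mathbf{x},\mathbf{y});\mathbf{v})$: indeed, any witnessing triple $(h_k, \mathbf{v}'_k, \mathbf{w}'_k)$ for $\mathbf{w}_0 \in DF_\uparrow((\mathbf{x},\mathbf{y});\mathbf{v})$ also witnesses $\mathbf{w}_0 + \mathbf{p}'$ for every $\mathbf{p}' \in P$, since $\mathbf{y} + h_k(\mathbf{w}'_k + \mathbf{p}') \in F(\mathbf{x} + h_k\mathbf{v}'_k) + P + h_k P \subset F(\mathbf{x} + h_k\mathbf{v}'_k) + P$ using that $P$ is a convex cone. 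Applying this with $\mathbf{w}_0 = -\mathbf{p} - \mathbf{w}$ and $\mathbf{p}' = \mathbf{p}$ gives $-\mathbf{w} \in DF_\uparrow((\mathbf{x},\mathbf{y});\mathbf{v})$, as required. The main obstacle is really just bookkeeping: one must approximate inside $DF_\uparrow$ (where Lipschitz regularity is available) rather than the smaller set $D_\uparrow F$, and carry the excess $\mathbf{p}$ all the way through the limit so that it can be cancelled at the end via the $P$-monotonicity of $DF_\uparrow$.
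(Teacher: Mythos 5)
Your proposal is correct and follows essentially the same route as the paper's proof: extract $-\mathbf{p}$ (the paper's $-\mathbf{d}$) as a limit of points $\mathbf{w}_k+\mathbf{z}_k$ with $\mathbf{z}_k\in D_\uparrow F((\mathbf{x},\mathbf{y});\mathbf{v}_k)\subset DF_\uparrow((\mathbf{x},\mathbf{y});\mathbf{v}_k)$, use compactness of $S$ to converge $(\mathbf{v}_k,\mathbf{w}_k)$, use the Lipschitz property and closedness of $DF_\uparrow(\mathbf{x},\mathbf{y})$ to place $-\mathbf{p}-\mathbf{w}$ in $DF_\uparrow((\mathbf{x},\mathbf{y});\mathbf{v})$, and absorb $\mathbf{p}$ via $DF_\uparrow((\mathbf{x},\mathbf{y});\mathbf{v})+P=DF_\uparrow((\mathbf{x},\mathbf{y});\mathbf{v})$. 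The only differences are cosmetic: you make the closed-graph step explicit by projecting onto the fixed set $DF_\uparrow((\mathbf{x},\mathbf{y});\mathbf{v})$, and you supply a proof of the $P$-monotonicity that the paper merely asserts.
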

\begin{proof}
By hypothesis, some $\mathbf{d} \in P$ obeys $-\mathbf{d} \in \mathcal{E}\bigg(\mathrm{cl}\bigg(\displaystyle \bigcup_{(\mathbf{v},\mathbf{w}) \in S}
\mathbf{w} + D_\uparrow F((\mathbf{x},\mathbf{y});\mathbf{v})\bigg) \bigg\} \bigg),P\bigg).$
Then $-\mathbf{d} = \lim_{k \rightarrow +\infty}  \mathbf{w}_k +  \mathbf{\overline{w}}_k,$ where
$\mathbf{\overline{w}}_k \in D_\uparrow F((\mathbf{x},\mathbf{y});\mathbf{v}_k) \subset D F_\uparrow((\mathbf{x},\mathbf{y});\mathbf{v}_k)$ and $(\mathbf{v}_k,\mathbf{w}_k) \in S$.
As $S$ is compact, by passing to a subsequence if necessary, we can assume that $(\mathbf{v}_k,\mathbf{w}_k) \rightarrow (\mathbf{v},\mathbf{w}) \in S$.
Hence, $\mathbf{\overline{w}}_k$ converges to $-\mathbf{d} -\mathbf{w}$. From Proposition \ref{prop:Lipschitz}(2),
 $DF_\uparrow(\mathbf{x},\mathbf{y})$ is Lipschitz. Moreover, $DF_\uparrow(\mathbf{x},\mathbf{y})$ is a closed-valued
 map, hence it has closed graph. Therefore, $-\mathbf{d} -\mathbf{w} \in D F_\uparrow((\mathbf{x},\mathbf{y});\mathbf{v}),$
 or $ -\mathbf{w} \in D F_\uparrow((\mathbf{x},\mathbf{y});\mathbf{v}) + \mathbf{d},$
But, $D F_\uparrow((\mathbf{x},\mathbf{y});\mathbf{v}) = D F_\uparrow((\mathbf{x},\mathbf{y});\mathbf{v}) + P$.
Hence, $ -\mathbf{w} \in D F_\uparrow((\mathbf{x},\mathbf{y});\mathbf{v})$.
\end{proof}\\

\begin{corollary} \label{cor:super}
Let $W$ be an extremal element map, $(t,\mathbf{x}) \in [0,T) \times \mathbf{R}^n$,
and $\mathbf{y} \in W(t,\mathbf{x})$. Assume that $W$ is Lipschitz around $(t,\mathbf{x})$, that $\mathbf{y} \in \mathcal{PE}(W(t,\mathbf{x}),P)$,
and that \emph{(\ref{eq:vfirst_ext})} holds. Then  \emph{(\ref{eq:vfirst})}
holds for some $(\mathbf{\overline{f}},\mathbf{\overline{L}}) \in (\mathrm{FL})(\mathbf{x})$.
\end{corollary}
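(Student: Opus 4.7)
The plan is to recognize the statement as an immediate specialization of Proposition \ref{prop:prop5} to the setting of (MOC), so the work reduces to identifying the data $(F,(\mathbf{x},\mathbf{y}),S)$ of that general result with the data of the corollary and checking that all hypotheses transfer.

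Concretely, I would take the ambient space to be $X = \mathbf{R}^{n+1}$, set $F = W$, and use the point $((t,\mathbf{x}),\mathbf{y}) \in \mathrm{Graph}(W)$ in place of $(\mathbf{x},\mathbf{y})$ in Proposition \ref{prop:prop5}. For the compact parameter set, I would let
$$S = \bigl\{\bigl((1,\mathbf{f}),\mathbf{L}\bigr) : (\mathbf{f},\mathbf{L}) \in (\mathrm{FL})(\mathbf{x})\bigr\} \subset \mathbf{R}^{n+1} \times \mathbf{R}^p.$$
Since $S$ is the image of $(\mathrm{FL})(\mathbf{x})$ under the continuous injection $(\mathbf{f},\mathbf{L}) \mapsto ((1,\mathbf{f}),\mathbf{L})$, Lemma \ref{lem:lemma2} gives that $S$ is nonempty and compact. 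With this identification, the union $\bigcup_{(\mathbf{v},\mathbf{w})\in S} \mathbf{w} + D_\uparrow F((\mathbf{x},\mathbf{y});\mathbf{v})$ appearing in Proposition \ref{prop:prop5} coincides with $\{\mathbf{L} + D_\uparrow W((t,\mathbf{x},\mathbf{y});(1,\mathbf{f})), (\mathbf{f},\mathbf{L}) \in (\mathrm{FL})(\mathbf{x})\}$, so the hypothesis (\ref{eq:vfirst_ext}) is exactly the hypothesis of Proposition \ref{prop:prop5} in this setting.

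The remaining standing hypotheses of Proposition \ref{prop:prop5} — namely that $W$ is Lipschitz around $(t,\mathbf{x})$ and that $\mathbf{y} \in \mathcal{PE}(W(t,\mathbf{x}),P)$ — are supplied verbatim in the statement of Corollary \ref{cor:super}. Applying Proposition \ref{prop:prop5} therefore yields some $((1,\overline{\mathbf{f}}),\overline{\mathbf{L}}) \in S$, i.e., some $(\overline{\mathbf{f}},\overline{\mathbf{L}}) \in (\mathrm{FL})(\mathbf{x})$, with
$$-\overline{\mathbf{L}} \in DW_\uparrow\bigl((t,\mathbf{x},\mathbf{y});(1,\overline{\mathbf{f}})\bigr),$$
which is precisely condition (\ref{eq:vfirst}).

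There is no real obstacle here: the substantive content — boundedness in the recession-cone sense, use of the Lipschitz property of $DF_\uparrow(\mathbf{x},\mathbf{y})$, and closedness of its graph — has already been absorbed into Proposition \ref{prop:prop5}. The only point that deserves a line of justification is the compactness of $S$, and this follows from Lemma \ref{lem:lemma2} together with continuity of the map $(\mathbf{f},\mathbf{L}) \mapsto ((1,\mathbf{f}),\mathbf{L})$.
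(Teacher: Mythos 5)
Your proposal is correct and follows exactly the route the paper intends: the corollary is stated without a separate proof precisely because it is Proposition \ref{prop:prop5} applied with $X=\mathbf{R}^{n+1}$, $F=W$, the point $((t,\mathbf{x}),\mathbf{y})$, and $S=\{((1,\mathbf{f}),\mathbf{L}):(\mathbf{f},\mathbf{L})\in(\mathrm{FL})(\mathbf{x})\}$, whose compactness comes from Lemma \ref{lem:lemma2}. Your identification of the data and the check that the hypotheses transfer are exactly what the paper relies on, so there is nothing to add.
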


%% Remark: we do not have an equivalent corollary for (\ref{eq:vsecond})
\subsection{A reformulation for (\ref{eq:vsecond})}

As above for (\ref{eq:vfirst}), we show that, when $W$ is Lipschitz around $(t,\mathbf{x}) \in (0,T] \times \mathbf{R}^n$ and
$\mathbf{y} \in W(t,\mathbf{x})$ is a properly minimal element with respect to $-P$ and $P$, i.e.,
$\mathbf{y} \in \mathcal{PE}(W(t,\mathbf{x}),-P) \cap \mathcal{PE}(W(t,\mathbf{x}),P)$, there exists a more compact formulation for (\ref{eq:vsecond}).\\

\begin{proposition} \label{prop:sub}
Let $W$ be a generalized contingent solution for \emph{(MOC)}, $(t,\mathbf{x}) \in (0,T] \times \mathbf{R}^n$,
and $\mathbf{y} \in W(t,\mathbf{x})$. Assume that:\\
\begin{enumerate}
  \item $W$ is Lipschitz around $(t,\mathbf{x})$,
  \item $\mathbf{y} \in \mathcal{PE}(W(t,\mathbf{x}),-P) \cap \mathcal{PE}(W(t,\mathbf{x}),P)$,
  \item the set-valued map $D_\uparrow W(t,\mathbf{x},\mathbf{y})$ is outer semicontinuous,
  \item the set $\mathcal{E}(\mathrm{cl}(S),-P)$ is closed, where
  \begin{equation} \label{eq:setS} S =  \bigg\{-\mathbf{L} + D_\uparrow W((t,\mathbf{x},\mathbf{y});(-1,-\mathbf{f})), \ (\mathbf{f},\mathbf{L}) \in (\mathrm{FL})(\mathbf{x}) \bigg\}.\end{equation}
\end{enumerate}
Then we have:
\begin{equation} \label{eq:vsecond_ext}
\mathbf{0} \in \mathcal{E}\bigg(\mathrm{cl}\bigg( \bigg\{-\mathbf{L} + D_\uparrow W((t,\mathbf{x},\mathbf{y});(-1,-\mathbf{f})), \ (\mathbf{f},\mathbf{L}) \in (\mathrm{FL})(\mathbf{x}) \bigg\} \bigg),-P\bigg)  + P,
\end{equation}
\end{proposition}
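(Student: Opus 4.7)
My plan is to mirror the proof of Proposition~\ref{prop:super}, with the outer cone in $\mathcal{E}$ replaced by $-P$ while the inner derivatives $D_\uparrow W$ (defined via $W_\uparrow = W+P$) remain unchanged. Since $\mathbf{0} \in \mathcal{E}(\mathrm{cl}(S),-P) + P$ is equivalent to $\mathcal{E}(\mathrm{cl}(S),-P) \cap (-P) \neq \emptyset$, the aim is to produce a $-P$-minimal element of $\mathrm{cl}(S)$ that lies in $-P$.

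The extraction step is parallel to Proposition~\ref{prop:super}. For any $(\mathbf{f},\mathbf{L}) \in (\mathrm{FL})(\mathbf{x})$, (\ref{eq:vsecond}) gives $\mathbf{L} \in DW_\uparrow((t,\mathbf{x},\mathbf{y});(-1,-\mathbf{f}))$. Under assumption~1 (Lipschitz) and the $P$-part of assumption~2, Proposition~\ref{prop:prop2} makes this set $P$-bounded, hence externally stable with respect to $P$ by Proposition~\ref{prop:exter}. Writing $\mathbf{L} = \mathbf{L}^* + \mathbf{p}$ with $\mathbf{L}^* \in D_\uparrow W((t,\mathbf{x},\mathbf{y});(-1,-\mathbf{f}))$ and $\mathbf{p} \in P$ yields $-\mathbf{p} = -\mathbf{L}+\mathbf{L}^* \in S \cap (-P)$. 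Using assumption~3 together with Lemma~\ref{lem:lemma2}, $S$ is closed, so $S \cap (-P)$ is closed, nonempty, and trivially $-P$-bounded; Proposition~\ref{prop:exter} then supplies $\mathbf{z}^* \in \mathcal{E}(S \cap (-P),-P) \subset -P$. It only remains to promote $\mathbf{z}^*$ to a $-P$-minimal element of all of $S$, since then $\mathbf{0} = \mathbf{z}^* + (-\mathbf{z}^*) \in \mathcal{E}(S,-P) + P$.

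The main obstacle is exactly this promotion: ruling out some $\mathbf{z}' = \mathbf{z}^* + \mathbf{p} \in S \setminus (-P)$ with $\mathbf{p} \in P \setminus \{\mathbf{0}\}$. Writing $\mathbf{z}' = -\mathbf{L}' + \mathbf{L}^{*'}$ with $\mathbf{L}^{*'} \in D_\uparrow W((t,\mathbf{x},\mathbf{y});(-1,-\mathbf{f}'))$ and invoking (\ref{eq:vsecond}) at $(\mathbf{f}',\mathbf{L}')$ places both $\mathbf{L}'$ and $\mathbf{L}^{*'}$ in $DW_\uparrow$; I need $\mathbf{L}' - \mathbf{L}^{*'} \in P$, which is equivalent to $\mathbf{z}' \in -P$, the desired contradiction. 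My route is to use the Lipschitz continuity of $W$ to extract from the sequence defining $\mathbf{L}^{*'}$ a tangent direction $\mathbf{L}^{*'}-\mathbf{p}^* \in DW((t,\mathbf{x},\mathbf{y});(-1,-\mathbf{f}'))$ with $\mathbf{p}^* \in P$; the $P$-minimality of $\mathbf{L}^{*'}$ inside $DW_\uparrow$ combined with $\mathbf{y} \in \mathcal{PE}(W(t,\mathbf{x}),-P)$ (the $-P$-part of assumption~2) should then force $\mathbf{p}^* = \mathbf{0}$, and an analogous representation applied to $\mathbf{L}'$ yields the required comparability. Assumption~4 enters in justifying the final passage to the limit. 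Making this extraction precise, and in particular verifying that $\mathbf{p}^*$ vanishes, is the technical heart and principal hurdle of the proof.
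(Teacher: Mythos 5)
Your reduction of (\ref{eq:vsecond_ext}) to finding an element of $\mathcal{E}(\mathrm{cl}(S),-P)\cap(-P)$, and your extraction of points of $S\cap(-P)$ from (\ref{eq:vsecond}) via $P$-boundedness and external stability, coincide with the paper's first step (its reformulation (\ref{eq:vsecond_ext2})), and your observation that assumption 3 plus compactness of $(\mathrm{FL})(\mathbf{x})$ closes $S$ is sound. The genuine gap is exactly the step you defer: promoting your $\mathbf{z}^*\in\mathcal{E}(S\cap(-P),-P)$ to a $-P$-minimal element of all of $S$. The one-step argument you sketch cannot work as stated. If $\mathbf{z}'=-\mathbf{L}'+\mathbf{L}^{*\prime}\in S$ dominates $\mathbf{z}^*$, the $P$-minimality of $\mathbf{L}^{*\prime}$ in $DW_\uparrow((t,\mathbf{x},\mathbf{y});(-1,-\mathbf{f}'))$ only excludes $\mathbf{L}'\in\mathbf{L}^{*\prime}-P\setminus\{\mathbf{0}\}$, i.e.\ $\mathbf{z}'\in P\setminus\{\mathbf{0}\}$; it provides no comparability in the direction you need ($\mathbf{L}'-\mathbf{L}^{*\prime}\in P$), since $\mathbf{L}'$ is an arbitrary element of $DW_\uparrow$, not one lying below the minimal element $\mathbf{L}^{*\prime}$. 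Worse, $\mathbf{z}'$ and $\mathbf{z}^*$ generally come from different pairs $(\mathbf{f},\mathbf{L})$, i.e.\ from different slices $-\mathbf{L}+D_\uparrow W((t,\mathbf{x},\mathbf{y});(-1,-\mathbf{f}))$ of $S$, so no single-slice minimality argument can produce the contradiction; indeed nothing guarantees that your particular $\mathbf{z}^*$ is $-P$-minimal in $S$ at all. The conclusion only asserts that \emph{some} element of $\mathcal{E}(\mathrm{cl}(S),-P)$ lies in $-P$, and in the paper that element arises as a limit of a sequence, not as the initially extracted point.

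The paper bridges this gap with an iterative construction that your proposal lacks: given $\mathbf{d}_k\in S\cap(-P)$ (supplied by (\ref{eq:vsecond_ext2})), either $\mathbf{d}_k$ is $-P$-minimal (done) or it is dominated by some $\mathbf{w}_k\in\mathcal{E}(\mathrm{cl}(S),-P)$; by outer semicontinuity of $D_\uparrow W(t,\mathbf{x},\mathbf{y})$ (Lemma \ref{label:prelim}) $\mathbf{w}_k$ lies in a slice indexed by some $(\mathbf{f}_{k+1},\mathbf{L}_{k+1})$, from which a new $\mathbf{d}_{k+1}\in S\cap(-P)$ is drawn, and so on. Boundedness of $\{(\mathbf{d}_k,\mathbf{w}_k)\}$ is then obtained by a recession-cone argument using both the $P$-boundedness and the $-P$-boundedness of $DW((t,\mathbf{x},\mathbf{y});(-1,-\overline{\mathbf{f}}))$ --- this is where the $\mathcal{PE}(W(t,\mathbf{x}),-P)$ half of assumption 2 is genuinely used, a use your sketch does not reproduce. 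In the limit, $\overline{\mathbf{d}}+\overline{\mathbf{L}}$ and $\overline{\mathbf{w}}+\overline{\mathbf{L}}$ belong to the \emph{same} set $D_\uparrow W((t,\mathbf{x},\mathbf{y});(-1,-\overline{\mathbf{f}}))$ and are comparable, hence equal by minimality, which forces $\overline{\mathbf{w}}\in-P$; assumption 4 (closedness of $\mathcal{E}(\mathrm{cl}(S),-P)$) is what keeps this limit $-P$-minimal. Without this sequence-and-limit mechanism, or a genuine substitute for it, your argument does not go through: the part you call the technical heart is in fact the entire proof.
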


Knowing that $\mathbf{y} \in \mathcal{PE}(W(t,\mathbf{x}),P)$ and using Proposition \ref{prop:prop2}, it is possible to conclude that the sets $D W_\uparrow((t,\mathbf{x},\mathbf{y});(-1,-\mathbf{f}))$  are $P$-bounded.
Hence, (\ref{eq:vsecond}) becomes
$$  \mathbf{L} \in D_\uparrow W((t,\mathbf{x},\mathbf{y});(-1,-\mathbf{f})) +P,
 $$
 or
\begin{equation} \label{eq:vsecond_ext2} \forall (\mathbf{f},\mathbf{L})  \in  (\mathrm{FL})(\mathbf{x}), \ \mathbf{0} \in -\mathbf{L} +   D_\uparrow W((t,\mathbf{x},\mathbf{y});(-1,-\mathbf{f})) +P.
\end{equation}
Hence, proving Proposition \ref{prop:sub} amounts to proving that (\ref{eq:vsecond_ext2}) implies (\ref{eq:vsecond_ext}).
We provide a constructive proof below, but beforehand, we need the following two results, where
Proposition \ref{prop:Tanino} is derived in our usual general setting.\\

\begin{proposition}[Theorem 2.1, \emph{\emph{\cite{Tanino88_moo}}}] \label{prop:Tanino}
Let $(\mathbf{x},\mathbf{y}) \in Graph(F)$. Assume that $P$ has a compact base. Then,
$$ \forall \mathbf{v} \in X, \  D_\uparrow F((\mathbf{x},\mathbf{y});\mathbf{v}) \subset
DF((\mathbf{x},\mathbf{y});\mathbf{v}).$$
\end{proposition}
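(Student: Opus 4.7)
The plan is to take $\mathbf{w} \in D_\uparrow F((\mathbf{x},\mathbf{y});\mathbf{v})$ and manufacture the witness sequence that certifies $\mathbf{w} \in DF((\mathbf{x},\mathbf{y});\mathbf{v})$. By definition, $\mathbf{w}$ is a minimal element of $DF_\uparrow((\mathbf{x},\mathbf{y});\mathbf{v})$, so there exist $h_k \to 0^+$ and $(\mathbf{v}_k,\mathbf{w}_k) \to (\mathbf{v},\mathbf{w})$ with $\mathbf{y}+h_k\mathbf{w}_k \in F(\mathbf{x}+h_k\mathbf{v}_k)+P$. For each $k$, choose $\mathbf{f}_k \in F(\mathbf{x}+h_k\mathbf{v}_k)$ and $\mathbf{p}_k \in P$ with $\mathbf{y}+h_k\mathbf{w}_k=\mathbf{f}_k+\mathbf{p}_k$, and exploit the compact base $B$ of $P$ to write $\mathbf{p}_k=h_k t_k \mathbf{b}_k$ for some $t_k \geq 0$ and $\mathbf{b}_k \in B$. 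Setting $\mathbf{w}'_k=(\mathbf{f}_k-\mathbf{y})/h_k=\mathbf{w}_k-t_k\mathbf{b}_k$, one has $\mathbf{y}+h_k\mathbf{w}'_k=\mathbf{f}_k \in F(\mathbf{x}+h_k\mathbf{v}_k)$, so the task reduces to proving $t_k\mathbf{b}_k\to\mathbf{0}$, which will follow from $t_k \to 0$ since $\mathbf{b}_k$ stays in the compact set $B$.

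I would show $t_k \to 0$ by ruling out every nonzero subsequential limit via the minimality of $\mathbf{w}$. Using compactness of $B$, pass to a subsequence so that $\mathbf{b}_k \to \mathbf{b}\in B$, in particular $\mathbf{b}\neq \mathbf{0}$. First, if $t_k \to t^{*} \in (0,\infty)$, then $\mathbf{w}'_k \to \mathbf{w}-t^{*}\mathbf{b}$, and the relation $\mathbf{y}+h_k\mathbf{w}'_k\in F(\mathbf{x}+h_k\mathbf{v}_k)$ places $\mathbf{w}-t^{*}\mathbf{b}$ in $DF((\mathbf{x},\mathbf{y});\mathbf{v}) \subset DF_\uparrow((\mathbf{x},\mathbf{y});\mathbf{v})$; since $\mathbf{w}=(\mathbf{w}-t^{*}\mathbf{b})+t^{*}\mathbf{b}$ with $t^{*}\mathbf{b}\in P$, minimality of $\mathbf{w}$ forces $t^{*}\mathbf{b}=\mathbf{0}$, contradicting $\mathbf{b}\neq \mathbf{0}$. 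Second, if $t_k\to +\infty$, then for large $k$ we have $t_k\geq 1$ and
\begin{displaymath}
\mathbf{y}+h_k(\mathbf{w}_k-\mathbf{b}_k)=\mathbf{f}_k+h_k(t_k-1)\mathbf{b}_k \in F(\mathbf{x}+h_k\mathbf{v}_k)+P,
\end{displaymath}
so, letting $k\to\infty$, $\mathbf{w}-\mathbf{b} \in DF_\uparrow((\mathbf{x},\mathbf{y});\mathbf{v})$; minimality of $\mathbf{w}$ applied to the decomposition $\mathbf{w}=(\mathbf{w}-\mathbf{b})+\mathbf{b}$ forces $\mathbf{b}=\mathbf{0}$, again a contradiction. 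Since the nonnegative sequence $t_k$ admits no positive or infinite subsequential limit, $t_k\to 0$, hence $\mathbf{w}'_k \to \mathbf{w}$ and $\mathbf{w} \in DF((\mathbf{x},\mathbf{y});\mathbf{v})$.

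The delicate step is the unbounded case $t_k\to\infty$: the naive residue $\mathbf{w}'_k=\mathbf{w}_k-t_k\mathbf{b}_k$ blows up, so one cannot extract a limit of $\mathbf{w}'_k$ directly inside $DF((\mathbf{x},\mathbf{y});\mathbf{v})$. The workaround is to strip off only a bounded portion (a single copy of $\mathbf{b}_k$) of the excess, leaving the nonnegative remainder $h_k(t_k-1)\mathbf{b}_k \in P$ absorbed into the $+P$ on the right; the shifted sequence then witnesses a new vector inside $DF_\uparrow$ that contradicts minimality. The compact base hypothesis enters twice here: it yields a convergent subsequence $\mathbf{b}_k\to\mathbf{b}$, and the property $\mathbf{0}\notin B$ guarantees $\mathbf{b}\neq\mathbf{0}$ so that the minimality clash actually produces a contradiction rather than a vacuous statement.
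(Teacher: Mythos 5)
Your proof is correct. Note, however, that the paper itself offers no argument for this proposition: it is imported verbatim as Theorem 2.1 of the cited reference (Tanino), so there is no in-paper proof to compare against. What you have done is supply a self-contained derivation, and it is sound: you decompose the slack $\mathbf{p}_k\in P$ along the compact base as $h_k t_k\mathbf{b}_k$, observe that $t_k\to 0$ would immediately give the required witness sequence for $DF$, and then eliminate every nonzero subsequential limit of $t_k$ by producing an element of $DF_\uparrow((\mathbf{x},\mathbf{y});\mathbf{v})$ strictly below $\mathbf{w}$ (namely $\mathbf{w}-t^{*}\mathbf{b}$, or $\mathbf{w}-\mathbf{b}$ in the unbounded case), contradicting minimality of $\mathbf{w}$ in $DF_\uparrow((\mathbf{x},\mathbf{y});\mathbf{v})$, which is exactly what membership in $D_\uparrow F((\mathbf{x},\mathbf{y});\mathbf{v})=\mathcal{E}(DF_\uparrow((\mathbf{x},\mathbf{y});\mathbf{v}),P)$ gives you. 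The two uses of the compact base are correctly identified: closedness and boundedness of $B$ give a convergent subsequence $\mathbf{b}_k\to\mathbf{b}\in B$, and $\mathbf{0}\notin B$ makes the limit nonzero so the minimality clash is genuine; the trick of stripping off a single copy of $\mathbf{b}_k$ and absorbing $h_k(t_k-1)\mathbf{b}_k$ into the $+P$ term handles the case $t_k\to\infty$, which is indeed the only delicate point. One cosmetic remark: since the conclusion only needs some witnessing sequence, a subsequence along which $t_k\to 0$ already suffices, so the full-sequence convergence you argue for is slightly more than required; and when you rule out subsequential limits you should, strictly speaking, re-extract the convergent $\mathbf{b}_k$ inside each chosen subsequence of $t_k$ — both are harmless and easily repaired in the write-up.
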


\begin{lemma} \label{label:prelim}
Under the assumptions of Proposition \emph{\ref{prop:sub}}, the set $\mathrm{cl}(S)$, where $S$ is defined by \emph{(\ref{eq:setS})} is $-P$-bounded. Moreover, if $\mathbf{w} \in \mathcal{E}(\mathrm{cl}(S),-P),$
then there exists $(\mathbf{\overline{f}},\mathbf{\overline{L}}) \in (\mathrm{FL})(\mathbf{x})$ such that $\mathbf{w} \in -\mathbf{\overline{L}} + D_\uparrow W((t,\mathbf{x},\mathbf{y});(-1,-\mathbf{\overline{f}}))$.
\end{lemma}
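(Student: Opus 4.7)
The plan is to treat the two claims in turn, leveraging the compactness of $(\mathrm{FL})(\mathbf{x})$ from Lemma \ref{lem:lemma2} together with the outer semicontinuity of $D_\uparrow W(t,\mathbf{x},\mathbf{y})$ (assumption 3 of Proposition \ref{prop:sub}); the other assumptions enter through background results (Lipschitz property of $DW_\uparrow$ via Proposition \ref{prop:Lipschitz}) rather than as direct inputs.

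For the $-P$-boundedness of $\mathrm{cl}(S)$, Lemma \ref{lem:lemma1}(1) reduces the claim to showing $S^+ \cap P = \{\mathbf{0}\}$. Suppose for contradiction that some $\mathbf{w} \in S^+ \cap P$ is nonzero. By definition of the recession cone, there exist $h_k \to 0^+$ and $\mathbf{w}_k = -\mathbf{L}_k + \mathbf{\overline{w}}_k \in S$, with $\mathbf{\overline{w}}_k \in D_\uparrow W((t,\mathbf{x},\mathbf{y});(-1,-\mathbf{f}_k))$ and $(\mathbf{f}_k,\mathbf{L}_k) \in (\mathrm{FL})(\mathbf{x})$, such that $h_k\mathbf{w}_k \to \mathbf{w}$. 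After passing to a subsequence, $(\mathbf{f}_k,\mathbf{L}_k) \to (\mathbf{\overline{f}},\mathbf{\overline{L}}) \in (\mathrm{FL})(\mathbf{x})$. Since $\mathbf{L}_k$ is uniformly bounded, $h_k\mathbf{L}_k \to \mathbf{0}$, so $h_k\mathbf{\overline{w}}_k \to \mathbf{w} \neq \mathbf{0}$, forcing $\|\mathbf{\overline{w}}_k\| \to \infty$. Taking $\delta_k := 1/\|\mathbf{\overline{w}}_k\|$ and passing to a further subsequence, $\delta_k \mathbf{\overline{w}}_k \to \mathbf{w}^* := \mathbf{w}/\|\mathbf{w}\|$, which lies in $P\setminus\{\mathbf{0}\}$.

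The pivot is the positive homogeneity of the contingent derivative in its direction argument: since the contingent cone is a cone and $\mathcal{E}(\lambda S, P) = \lambda \mathcal{E}(S,P)$ for $\lambda > 0$, the map $\mathbf{v} \mapsto D_\uparrow W((t,\mathbf{x},\mathbf{y});\mathbf{v})$ is positively homogeneous. Hence $\delta_k\mathbf{\overline{w}}_k \in D_\uparrow W((t,\mathbf{x},\mathbf{y});(-\delta_k,-\delta_k\mathbf{f}_k))$, with $(-\delta_k,-\delta_k\mathbf{f}_k) \to (0,\mathbf{0})$. The outer semicontinuity hypothesis then yields $\mathbf{w}^* \in D_\uparrow W((t,\mathbf{x},\mathbf{y});(0,\mathbf{0})) = \mathcal{E}(DW_\uparrow((t,\mathbf{x},\mathbf{y});(0,\mathbf{0})),P)$. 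Since every contingent cone contains the origin, $\mathbf{0} \in DW_\uparrow((t,\mathbf{x},\mathbf{y});(0,\mathbf{0}))$; combined with $\mathbf{w}^* \in \mathbf{0} + P\setminus\{\mathbf{0}\}$, this contradicts $\mathbf{w}^*$ being $P$-minimal in $DW_\uparrow((t,\mathbf{x},\mathbf{y});(0,\mathbf{0}))$. Therefore $\mathbf{w} = \mathbf{0}$.

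For the representation claim, let $\mathbf{w} \in \mathcal{E}(\mathrm{cl}(S),-P) \subset \mathrm{cl}(S)$ and write $\mathbf{w} = \lim_k (-\mathbf{L}_k + \mathbf{\overline{w}}_k)$ with $\mathbf{\overline{w}}_k \in D_\uparrow W((t,\mathbf{x},\mathbf{y});(-1,-\mathbf{f}_k))$ and $(\mathbf{f}_k,\mathbf{L}_k) \in (\mathrm{FL})(\mathbf{x})$. Extracting a convergent subsequence $(\mathbf{f}_k,\mathbf{L}_k) \to (\mathbf{\overline{f}},\mathbf{\overline{L}}) \in (\mathrm{FL})(\mathbf{x})$, it follows that $\mathbf{\overline{w}}_k \to \mathbf{w} + \mathbf{\overline{L}} =: \widehat{\mathbf{w}}$. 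Outer semicontinuity of $D_\uparrow W(t,\mathbf{x},\mathbf{y})$ applied at the nonzero direction $(-1,-\mathbf{\overline{f}})$ gives $\widehat{\mathbf{w}} \in D_\uparrow W((t,\mathbf{x},\mathbf{y});(-1,-\mathbf{\overline{f}}))$, so $\mathbf{w} = -\mathbf{\overline{L}} + \widehat{\mathbf{w}}$ has the asserted form. The main obstacle I anticipate is the scaling argument in the first part — explicitly extracting the normalized limit direction $\mathbf{w}^*$ and invoking outer semicontinuity at the degenerate direction $(0,\mathbf{0})$, where the trivial membership $\mathbf{0} \in DW_\uparrow((t,\mathbf{x},\mathbf{y});(0,\mathbf{0}))$ closes the contradiction; the second part is essentially a direct application of compactness and outer semicontinuity.
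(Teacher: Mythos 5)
Your proof is correct, and its second half (the representation of $\mathbf{w} \in \mathcal{E}(\mathrm{cl}(S),-P)$) is essentially identical to the paper's: extract a convergent subsequence from the compact set $(\mathrm{FL})(\mathbf{x})$ and invoke outer semicontinuity of $D_\uparrow W(t,\mathbf{x},\mathbf{y})$ at $(-1,-\mathbf{\overline{f}})$. The $-P$-boundedness half, however, follows a genuinely different route. The paper first passes from $D_\uparrow W$ to the contingent derivative $DW$ via Proposition \ref{prop:Tanino} (compact base of $P$), uses the last statement of Proposition \ref{prop:prop2} applied with the cone $-P$ --- which is where assumption 2, $\mathbf{y} \in \mathcal{PE}(W(t,\mathbf{x}),-P)$, and assumption 1, the Lipschitz property of $W$, enter --- to get $-P$-boundedness of each set $DW((t,\mathbf{x},\mathbf{y});(-1,-\mathbf{f}))$, and then reruns the recession-cone/Lipschitz argument of Proposition \ref{prop:prop4} to handle the union over the compact parameter set. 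You instead exploit that $\mathrm{Graph}(DW_\uparrow(t,\mathbf{x},\mathbf{y}))$ is a cone, so $D_\uparrow W$ is positively homogeneous in the direction; rescaling an unbounded recession sequence produces elements of $D_\uparrow W((t,\mathbf{x},\mathbf{y});(-\delta_k,-\delta_k\mathbf{f}_k))$ with directions tending to $(0,\mathbf{0})$, and assumption 3 (outer semicontinuity) forces the normalized limit $\mathbf{w}^* \in P\setminus\{\mathbf{0}\}$ into $\mathcal{E}(DW_\uparrow((t,\mathbf{x},\mathbf{y});(0,\mathbf{0})),P)$, contradicting minimality since $\mathbf{0}$ always lies in the contingent cone. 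This is a valid trade: your argument dispenses with assumptions 1 and 2 and with Proposition \ref{prop:Tanino} for this half (so your opening remark that the Lipschitz property enters as a background input is actually not needed), but it leans on outer semicontinuity at the degenerate direction $(0,\mathbf{0})$ --- covered by assumption 3 as stated, though the paper only ever invokes that hypothesis at directions of the form $(-1,-\mathbf{f})$; the paper's route, conversely, gets the boundedness purely from the proper-minimality and Lipschitz hypotheses without touching the outer semicontinuity assumption.
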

\begin{proof}
From Lemma \ref{lem:lemma1} (1)-(2) and Proposition \ref{prop:Tanino}, it is enough to show that the following set is
$-P$-bounded:
$$\bigg\{-\mathbf{L} + D W((t,\mathbf{x},\mathbf{y});(-1,-\mathbf{f})), \ (\mathbf{f},\mathbf{L}) \in (\mathrm{FL})(\mathbf{x}) \bigg\}.$$
From the last statement of Proposition \ref{prop:prop2} applied with $-P$, we get that the sets
$D W((t,\mathbf{x},\mathbf{y});(-1,-\mathbf{f}))$ are $-P$-bounded. From there, the proof follows the same lines as the
proof of Proposition \ref{prop:prop4}.\\ \\ Let  $\mathbf{w}_k \in S$ be a sequence converging to $\mathbf{w}$.
We have $\mathbf{w}_k \in -\mathbf{L}_k + D_\uparrow W((t,\mathbf{x},\mathbf{y});(-1,-\mathbf{f}_k))$ for some
$(\mathbf{f}_k,\mathbf{L}_k) \in (\mathrm{FL})(\mathbf{x})$. As $(\mathrm{FL})(\mathbf{x})$ is compact, we
can assume, by passing to a subsequence if necessary, that $(\mathbf{f}_k,\mathbf{L}_k)$ converges to
$(\mathbf{\overline{f}},\mathbf{\overline{L}}) \in (\mathrm{FL})(\mathbf{x})$. By assumption,
$D_\uparrow W(t,\mathbf{x},\mathbf{y})$ is outer semicontinuous. Hence $\mathbf{w}+ \mathbf{\overline{L}}
\in D_\uparrow W((t,\mathbf{x},\mathbf{y});(-1,-\mathbf{\overline{f}}))$, or
$\mathbf{w} \in -\mathbf{\overline{L}} + D_\uparrow W((t,\mathbf{x},\mathbf{y});(-1,-\mathbf{\overline{f}})).$
\end{proof} \\

We can now proceed with the proof of Proposition \ref{prop:sub}. Recall that $S$ is defined by (\ref{eq:setS}).\\

\begin{proof}[Proposition \ref{prop:sub}]
We  build sequences $\mathbf{f}_k$, $\mathbf{L}_k,$ $\mathbf{d}_k,$ and $\mathbf{w}_k,$ as follows. Take $(\mathbf{f}_k,\mathbf{L}_k) \in (\mathrm{FL})(\mathbf{x})$.
From (\ref{eq:vsecond_ext2}), there exists  $  \mathbf{d}_k \in (-\mathbf{L}_k +   D_\uparrow W((t,\mathbf{x},\mathbf{y});(-1,-\mathbf{f}_k))) \cap -P.$
Hence, $\mathbf{d}_k \in S.$ If $\mathbf{d}_k \in \mathcal{E}(\mathrm{cl}(S),-P),$ then
 the proof is complete. Otherwise, there exists $\mathbf{w}_k \in \mathrm{cl}(S)$
 such that $\mathbf{w}_k \in \mathbf{d}_k  + P \backslash \{\mathbf{0}\}. $ As $\mathrm{cl}(S)$ is
externally stable with respect to $-P$, without loss
 of generality, we can assume that $\mathbf{w}_k \in \mathcal{E}(\mathrm{cl}(S),-P)$.
 From Lemma \ref{label:prelim}, there exists $(\mathbf{f}_{k+1},\mathbf{L}_{k+1}) \in (\mathrm{FL})(\mathbf{x})$
 such that $\mathbf{w}_k \in -\mathbf{L}_{k+1} +   D_\uparrow W((t,\mathbf{x},\mathbf{y});(-1,-\mathbf{f}_{k+1})).$
%(note that necessarily $\mathbf{\overline{x}_0}  \neq \mathbf{\overline{x}_1} $).
If $\mathbf{w_k} \in -P$, the proof is complete. Otherwise, $\mathbf{w}_k \in -P^c.$
Repeat now the procedure  by choosing  $\mathbf{d}_{k+1} \in (-\mathbf{L}_{k+1} +   D_\uparrow W((t,\mathbf{x},\mathbf{y});(-1,-\mathbf{f}_{k+1}))) \cap -P$
using (\ref{eq:vsecond_ext2}). To summarize, the sequences  $\mathbf{f}_k$, $\mathbf{L}_k,$ $\mathbf{d}_k,$ and $\mathbf{w}_k,$
satisfy:\\ \\ Start with $(\mathbf{f}_0,\mathbf{L}_0) \in (\mathrm{FL})(\mathbf{x})$, $\mathbf{d}_0 \in (-\mathbf{L}_0 + D_\uparrow W((t,\mathbf{x},\mathbf{y});(-1,-\mathbf{f}_0))) \cap -P.$ Then, \\
 \begin{enumerate}
   \item $\mathbf{w}_k \in \mathbf{d}_k  + P \backslash \{\mathbf{0}\}$,
   \item $\mathbf{w}_k \in \mathcal{E}(\mathrm{cl}(S),-P)$,
   \item $\mathbf{w}_k \in -\mathbf{L}_{k+1} +   D_\uparrow W((t,\mathbf{x},\mathbf{y});(-1,-\mathbf{f}_{k+1}))$ for some $(\mathbf{f}_{k+1},\mathbf{L}_{k+1}) \in (\mathrm{FL})(\mathbf{x})$,
   \item $\mathbf{w}_k \in -P^c,$
   \item $\mathbf{d}_{k+1} \in -\mathbf{L}_{k+1} +   D_\uparrow W((t,\mathbf{x},\mathbf{y});(-1,-\mathbf{f}_{k+1}))$,
   \item $\mathbf{d}_{k+1} \in -P$.\\
 \end{enumerate}
As $(\mathrm{FL})(\mathbf{x})$ is compact, by passing to a subsequence if necessary, we can assume that $(\mathbf{f}_{k},\mathbf{L}_{k})$ converges to
$(\mathbf{\overline{f}},\mathbf{\overline{L}}) \in (\mathrm{FL})(\mathbf{x}).$ Consider now the recession cone of the set $\{(\mathbf{d}_k,\mathbf{w}_k), \ k \in \mathbf{N}\}$. Take $(\mathbf{\overline{d}}, \mathbf{\overline{w}}) \in
\{(\mathbf{d}_k,\mathbf{w}_k), \ k \in \mathbf{N}\}^+ $, i.e.,
let $h_k \rightarrow 0^+$ be such that $h_k \mathbf{d_k} \rightarrow \mathbf{\overline{d}}$
and $h_k \mathbf{w}_k \rightarrow \mathbf{\overline{w}}$. From (1) and (6) above, we get
\begin{equation} \label{eq:atthelimit1}
 \mathbf{\overline{d}}- \mathbf{\overline{w}} \in  - P,
\end{equation}
\begin{equation} \label{eq:atthelimit2}
\mathbf{\overline{d}} \in -P,
\end{equation}
%\begin{enumerate}
%\item $\mathbf{\overline{z}} \in -\mathrm{cl}(P^c)$ and $\mathbf{\overline{y}}- \mathbf{\overline{z}} \in  - P$.
%\item $\mathbf{\overline{y}} - \mathbf{\overline{z}} \in \mathrm{cl}((-P)^c)$ and $\mathbf{\overline{y}} \in -P$.\\
%\end{enumerate}
We show now that necessarily $( \mathbf{\overline{w}}, \mathbf{\overline{z}}) = (\mathbf{0},\mathbf{0})$.
From Proposition \ref{prop:Tanino},  we have $D_\uparrow W((t,\mathbf{x},\mathbf{y});(-1,-\mathbf{f}_{k+1}))
 \subset DW((t,\mathbf{x},\mathbf{y});(-1,-\mathbf{f}_{k+1}))$.
Hence, $\mathbf{w}_k + \mathbf{L}_{k+1}  \in DW((t,\mathbf{x},\mathbf{y});(-1,-\mathbf{f}_{k+1})).$
Moreover, from Proposition \ref{prop:Lipschitz}, the set-valued map $DW(\mathbf{x},\mathbf{y})$ is Lipschitz. Hence,
along the lines of the proof of Proposition \ref{prop:prop4}, it can be shown that
$\mathbf{\overline{w}} \in DW((t,\mathbf{x},\mathbf{y});(-1,-\mathbf{\overline{f}}))^+$.
A similar argument also gives $\mathbf{\overline{d}} \in DW((t,\mathbf{x},\mathbf{y});(-1,-\mathbf{\overline{f}}))^+$.
Hence, from (\ref{eq:atthelimit2}), $\mathbf{\overline{d}} \in DW((t,\mathbf{x},\mathbf{y});(-1,-\mathbf{\overline{f}}))^+ \cap  -P$, but we know that the set
$DW((t,\mathbf{x},\mathbf{y});(-1,-\mathbf{\overline{f}}))$ is $P$-bounded, hence $\mathbf{\overline{d}} = \mathbf{0}$.
Therefore,  from (\ref{eq:atthelimit1}), we get that  $\mathbf{\overline{w}} \in  P$.
Hence, $\mathbf{\overline{w}} \in DW((t,\mathbf{x},\mathbf{y});(-1,-\mathbf{\overline{f}}))^+\cap P$.
But, we know that the set $DW((t,\mathbf{x},\mathbf{y});(-1,-\mathbf{\overline{f}}))$ is $-P$-bounded,
hence $\mathbf{\overline{w}} = \mathbf{0}$. Finally, $( \mathbf{\overline{d}}, \mathbf{\overline{w}}) = (\mathbf{0},\mathbf{0})$, and therefore
the set $\{(\mathbf{d}_k,\mathbf{w}_k), \ k \in \mathbf{N}\}$ is bounded (Lemma 3.2.1 \cite[p.~52]{NonlinearMOO1_book}). \\ \\
By passing to a subsequence if necessary, we can therefore assume that $\mathbf{d}_k$ converges to $\mathbf{\overline{d}}$ and that
$\mathbf{w}_k$ converges to $\mathbf{\overline{w}}$. By assumption, $D_\uparrow W(t,\mathbf{x},\mathbf{y})$
is outer semicontinuous, hence $\mathbf{\overline{d}}+\mathbf{\overline{L}},\mathbf{\overline{w}}+\mathbf{\overline{L}}
\in D_\uparrow W((t,\mathbf{x},\mathbf{y});(-1,-\mathbf{\overline{f}}))$. From (1)  above, we get $\mathbf{\overline{d}}-
\mathbf{\overline{w}} \in  - P$, or $\mathbf{\overline{d}}+ \mathbf{\overline{L}}- (\mathbf{\overline{w}}+\mathbf{\overline{L}}) \in  - P$.
$\mathbf{\overline{d}}+ \mathbf{\overline{L}}$ and $\mathbf{\overline{w}}+\mathbf{\overline{L}}$ are both minimal elements, hence we must have
$\mathbf{\overline{d}}+ \mathbf{\overline{L}} = \mathbf{\overline{w}}+ \mathbf{\overline{L}}$,
or $\mathbf{\overline{d}} = \mathbf{\overline{w}}$. From (6)  above, we get $\mathbf{\overline{d}} \in -P$,
hence $\mathbf{\overline{w}} \in -P$. Finally, recall that $\mathbf{\overline{w}}$ is the limit of the sequence
 $\mathbf{\overline{w}_k} \in \mathcal{E}(\mathrm{cl}(S),-P),$ which is a closed set by assumption.
Hence, $\mathbf{\overline{w}} \in \mathcal{E}(\mathrm{cl}(S),-P) \cap -P$, which completes the proof.
\end{proof}

\subsection{The case $p=1$} \label{ss:casepegal1}

In this section, we show that, when $p=1$ and $P =\mathbf{R}^+$, the notion of lower Dini solution
for single objective optimal control problems is retrieved from Definition \ref{def:solution}. In this case, $W$ is a set-valued map
from $I \times \mathbf{R}^n$ to $\mathbf{R}$. As, by definition, $W$ is an extremal element map, it follows
from $(\ref{eq:property1})$ and $(\ref{eq:property2})$ that the values of $W$ reduce to singletons. Hence, we can write $W(t,\mathbf{x}) = \{w(t,\mathbf{x})\},$ where $w$ is a function from $I \times \mathbf{R}^n$ to $\mathbf{R}$.\\
%% Note: what about -inf and +inf?

To go further, we need the following proposition which relates the contingent epiderivative  and the lower generalized Dini derivative for extended real-valued functions.\\

%% Question: can we extend this proposition to t = 0 and t = T? My answer
%% is yes, as in the statement of the proposition below in SVA book, it is
%% said that x belongs to the domain
\begin{proposition}[\emph{\emph{\cite[p.~225]{SVA_book}}}] \label{prop:dinieq}
Let $(t,\mathbf{x}) \in  I \times \mathbf{R}^n$ and $(\tau,\mathbf{v}) \in  \mathbf{R}^{n+1}.$
Then, by definition of the contingent epiderivative $D_\uparrow w(t,\mathbf{x}): \mathbf{R}^{n+1} \rightarrow
\mathbf{R} \cup \{-\infty,
+\infty\}$:
\begin{equation} \label{eq:defcontinderi}
\forall u \in D w_\uparrow((t,\mathbf{x});(\tau,\mathbf{v})), \ D_\uparrow w((t,\mathbf{x});(\tau,\mathbf{v})) \leq u.
\end{equation}
Moreover,
$$D_\uparrow w((t,\mathbf{x});(\tau,\mathbf{v})) =  \liminf_{h\rightarrow 0^+,(s,\mathbf{p})\rightarrow(\tau,\mathbf{v})} \frac {w(t+hs,\mathbf{x}+h\mathbf{p})-w(t,\mathbf{x})}{h}.$$ Hence,
\begin{equation} \label{eq:eqcondini}
D_\uparrow w((t,\mathbf{x});(\tau,\mathbf{v})) =  \partial^-w((t,\mathbf{x});(\tau,\mathbf{v})),
\end{equation}
where $\partial^-w((t,\mathbf{x});(\tau,\mathbf{v}))$ denotes
the lower generalized Dini derivative of $w$ at $(t,\mathbf{x})$ in the direction
$(\tau,\mathbf{v})$. \\ \\
\end{proposition}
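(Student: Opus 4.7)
The plan is to unwind the definitions in the scalar setting and reduce the three claims to the standard epigraphical characterization of the lower Dini derivative. Since $W(t,\mathbf{x}) = \{w(t,\mathbf{x})\}$ is a singleton, $\mathrm{Graph}(W_\uparrow) = \mathrm{epi}(w)$. Applying the definitions of the contingent derivative for set-valued maps and of the contingent cone, $u \in D w_\uparrow((t,\mathbf{x});(\tau,\mathbf{v}))$ if and only if there exist sequences $h_k \to 0^+$ and $(s_k,\mathbf{p}_k,u_k) \to (\tau,\mathbf{v},u)$ such that $w(t+h_k s_k,\mathbf{x}+h_k \mathbf{p}_k) - w(t,\mathbf{x}) \leq h_k u_k$.

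For \eqref{eq:defcontinderi}, I would take any such $u$, divide the epigraph inequality by $h_k$, and pass to the liminf to obtain
\begin{displaymath}
\liminf_{h\to 0^+,(s,\mathbf{p})\to(\tau,\mathbf{v})} \frac{w(t+hs,\mathbf{x}+h\mathbf{p})-w(t,\mathbf{x})}{h} \leq u.
\end{displaymath}
Under the convention of \cite[p.~225]{SVA_book}, this extended-real liminf is by definition $D_\uparrow w((t,\mathbf{x});(\tau,\mathbf{v}))$, so we conclude that $D_\uparrow w((t,\mathbf{x});(\tau,\mathbf{v}))$ is a lower bound on every element of $D w_\uparrow((t,\mathbf{x});(\tau,\mathbf{v}))$.

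For the liminf formula, I would prove the reverse inequality by exhibiting a realizing sequence. Let $\ell$ denote the liminf, and suppose first $\ell \in \mathbf{R}$. Choose $h_k \to 0^+$ and $(s_k,\mathbf{p}_k) \to (\tau,\mathbf{v})$ along which the difference quotient tends to $\ell$, and set $u_k$ equal to that quotient. Then the epigraph inequality holds with equality, so $\ell \in D w_\uparrow((t,\mathbf{x});(\tau,\mathbf{v}))$; combined with the lower bound above, this forces $\ell = D_\uparrow w((t,\mathbf{x});(\tau,\mathbf{v}))$. The identity \eqref{eq:eqcondini} is then just the recognition that this liminf is by definition the lower generalized Dini derivative $\partial^- w$.

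The main obstacle is handling the extended-real cases cleanly. When $\ell = -\infty$, a realizing sequence produces elements of $D w_\uparrow$ tending to $-\infty$, consistent with the convention that $D_\uparrow w = -\infty$ in that case; when $\ell = +\infty$, $D w_\uparrow((t,\mathbf{x});(\tau,\mathbf{v}))$ is either empty or reduces to $\{+\infty\}$, again matching the convention. Apart from this bookkeeping, the proof is a direct translation of the scalar epigraphical characterization and requires no new machinery beyond the definitions already in \S \ref{s:mathprel} and \S \ref{s:moogeneral}.
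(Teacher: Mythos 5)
Your proof is correct: the paper does not prove this proposition at all but simply quotes it from \cite[p.~225]{SVA_book}, and your argument --- identifying $\mathrm{Graph}(W_\uparrow)$ with $\mathrm{epi}(w)$, unwinding the contingent-cone definition into the inequality $w(t+h_k s_k,\mathbf{x}+h_k\mathbf{p}_k)-w(t,\mathbf{x})\leq h_k u_k$, and proving the two inequalities between the liminf and the elements of $Dw_\uparrow$ (with the $\pm\infty$ conventions handled) --- is exactly the standard textbook verification being cited. The only cosmetic point is to fix at the outset which characterization you take as the definition of $D_\uparrow w$ (the paper's $\mathcal{E}(Dw_\uparrow(\cdot),P)$, i.e.\ the minimum over the contingent-cone slice, versus the liminf), since your two inequalities together establish their equality either way; also note that in the case $\ell=+\infty$ the set $Dw_\uparrow((t,\mathbf{x});(\tau,\mathbf{v}))\subset\mathbf{R}$ is simply empty, it cannot equal $\{+\infty\}$.
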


As $DW_\uparrow((t,\mathbf{x},w(t,\mathbf{x}));(\tau,\mathbf{v})) = Dw_\uparrow((t,\mathbf{x});(\tau,\mathbf{v}))$,
(\ref{eq:vfirst}) therefore  writes
$$
 -\overline{L} \in Dw_\uparrow((t,\mathbf{x});(1,\mathbf{\overline{f}})).
$$
Hence, from (\ref{eq:defcontinderi})-(\ref{eq:eqcondini}),
$$
\partial^-w((t,\mathbf{x});(1,\mathbf{\overline{f}})) \leq -\overline{L},
$$
or
\begin{equation} \label{eq:dini1}
\inf_{(\mathbf{f},L) \in (\mathrm{FL})(\mathbf{x})}  L + \partial^-w((t,\mathbf{x});(1,\mathbf{f})) \leq 0.
\end{equation}
Accordingly, (\ref{eq:vsecond}) writes
$$
\forall (\mathbf{f},L) \in (\mathrm{FL})(\mathbf{x}), \ L \in Dw_\uparrow((t,\mathbf{x});(-1,-\mathbf{f})).
$$
Hence, from (\ref{eq:defcontinderi})-(\ref{eq:eqcondini}),
$$
\forall (\mathbf{f},L) \in (\mathrm{FL})(\mathbf{x}), \ \partial^-w((t,\mathbf{x});(-1,-\mathbf{f})) \leq L,
$$
or
\begin{equation} \label{eq:dini2}
\sup_{(\mathbf{f},L) \in (\mathrm{FL})(\mathbf{x})}  -L + \partial^-w((t,\mathbf{x});(-1,-\mathbf{f})) \leq 0.
\end{equation}
Finally,
\begin{equation} \label{eq:dinitercond}
w(T,\mathbf{x}) = 0.
\end{equation}
Equations (\ref{eq:dini1})-(\ref{eq:dinitercond}) precisely correspond to the notion of lower Dini solution for single objective optimal control problems.\\

Equations (\ref{eq:dini1}) and (\ref{eq:dini2}) can also be directly obtained from Propositions \ref{prop:super} and
\ref{prop:sub}. First, observe that $w(t,\mathbf{x})$ is a properly minimal element of the set $W(t,\mathbf{x})$ with respect to -$P$ and $P$, i.e., $w(t,\mathbf{x}) \in \mathcal{PE}(W(t,\mathbf{x}),-P) \cap \mathcal{PE}(W(t,\mathbf{x}),P)$. Assume now that $W$ is Lipschitz  around $(t,\mathbf{x})$, then it follows that $w$ is also Lipschitz. Hence, from Propositions~\ref{prop:prop2} and \ref{prop:exter}, the sets
$D_\uparrow W((t,\mathbf{x},w(t,\mathbf{x}));(\tau,\mathbf{v}))$
are nonempty and  reduce to singletons. More precisely,
we have
$$
 D_\uparrow W((t,\mathbf{x},  w(t,\mathbf{x}) );(\tau,\mathbf{v})) = \{ D_\uparrow w((t,\mathbf{x} );(\tau,\mathbf{v})) \}.
 $$
Hence, (\ref{eq:vfirst_ext}) from Proposition \ref{prop:super} yields
$$
0 \in \mathcal{E}\bigg(\mathrm{cl}\bigg( \bigg\{L +  \partial^-w((t,\mathbf{x});(1,\mathbf{f})), \ (\mathbf{f},L) \in (\mathrm{FL})(\mathbf{x}) \bigg\} \bigg),P\bigg)  + P,
$$
which is precisely  (\ref{eq:dini1}).  For Proposition \ref{prop:sub}, note that Assumptions 3 and 4 are automatically satisfied. Indeed, the set-valued map $D_\uparrow W(t,\mathbf{x},w(t,\mathbf{x}))$  is Lipschitz and takes closed values, i.e., singletons, hence it is outer semicontinuous. Moreover, the set $\mathcal{E}(\mathrm{cl}(S),-P)$ in Assumption 4 reduces to a singleton, hence it is closed. Finally, (\ref{eq:vsecond_ext}) yields
$$
0 \in \mathcal{E}\bigg(\mathrm{cl}\bigg( \bigg\{-L +  \partial^-w((t,\mathbf{x});(-1,-\mathbf{f})), \ (\mathbf{f},L) \in (\mathrm{FL})(\mathbf{x}) \bigg\} \bigg),-P\bigg)  + P,
$$
which is precisely  (\ref{eq:dini2}).

\section{The set-valued return function $V$ is a  generalized contingent solution for (MOC)} \label{s:setvalued}

In this section, we first show that the set-valued return function is a generalized contingent solution
for (MOC). We then prove that the set-valued map $V_\uparrow$ is outer semicontinuous.\\

\begin{proposition} \label{prop:setvaluedmap}
The set-valued return function $V$  is a generalized contingent solution
for (MOC).
\end{proposition}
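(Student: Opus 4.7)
The plan is to verify, in turn, each condition appearing in Definition \ref{def:solution} together with the extremal element property of Definition \ref{def:extmap}. Three of the five obligations should be essentially immediate: the terminal condition (\ref{eq:tercon1}) follows from $Y(T,\mathbf{x}) = \{\mathbf{0}\}$, which forces $V(T,\mathbf{x}) = \{\mathbf{0}\}$; and the extremal conditions (\ref{eq:property1})--(\ref{eq:property2}) follow directly from the definition of a minimal element of $\mathrm{cl}(Y(t,\mathbf{x}))$, since any two distinct points of the minimal element set are incomparable with respect to $P$ in either direction. The substance of the argument lies in the two contingent inclusions (\ref{eq:vfirst}) and (\ref{eq:vsecond}); I intend to derive both from the multiobjective dynamic programming equation of Proposition \ref{th:dp} combined with the external stability of Corollary \ref{cor:extstabcompactset}.

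For (\ref{eq:vfirst}), fix $(t,\mathbf{x}) \in [0,T) \times \mathbf{R}^n$, $\mathbf{y} \in V(t,\mathbf{x})$, and a sequence $\tau_k \to 0^+$. Proposition \ref{th:dp} identifies $\mathbf{y}$ with a minimal element of $\mathrm{cl}(\widetilde{Y}(\tau_k,t,\mathbf{x}))$, so I would pick $\mathbf{u}_k(\cdot) \in \mathcal{U}$ and $\mathbf{v}_k \in V(t+\tau_k,\mathbf{x}_k(t+\tau_k))$, where $\mathbf{x}_k(s) = \mathbf{x}(s;t,\mathbf{x},\mathbf{u}_k(\cdot))$, satisfying $\|\mathbf{y} - \mathbf{J}(t,t+\tau_k,\mathbf{x},\mathbf{u}_k(\cdot)) - \mathbf{v}_k\| \leq \tau_k^2$. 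Introduce the difference quotients
\[ \mathbf{a}_k = \tfrac{1}{\tau_k}\bigl(\mathbf{x}_k(t+\tau_k)-\mathbf{x}\bigr), \qquad \mathbf{b}_k = \tfrac{1}{\tau_k}\,\mathbf{J}(t,t+\tau_k,\mathbf{x},\mathbf{u}_k(\cdot)). \]
The bounds (\ref{eq:fbounded}) and (\ref{eq:Lbounded}) keep $(\mathbf{a}_k,\mathbf{b}_k)$ bounded, while the Lipschitz estimates (\ref{eq:lipf}) and (\ref{eq:LLipschitz}) show that $(\mathbf{a}_k,\mathbf{b}_k)$ differs by $o(1)$ from the time averages of $(\mathbf{f}(\mathbf{x},\mathbf{u}_k(s)),\mathbf{L}(\mathbf{x},\mathbf{u}_k(s)))$ over $[t,t+\tau_k]$, which lie in the compact set $(\mathrm{FL})(\mathbf{x})$. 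Passing to a subsequence, $(\mathbf{a}_k,\mathbf{b}_k) \to (\overline{\mathbf{f}},\overline{\mathbf{L}}) \in (\mathrm{FL})(\mathbf{x})$, and the identity $\mathbf{y}+\tau_k(-\mathbf{b}_k)+O(\tau_k^2) = \mathbf{v}_k \in V_\uparrow(t+\tau_k,\mathbf{x}+\tau_k \mathbf{a}_k)$ exhibits $(1,\overline{\mathbf{f}},-\overline{\mathbf{L}})$ in $T_{\mathrm{Graph}(V_\uparrow)}(t,\mathbf{x},\mathbf{y})$, which is (\ref{eq:vfirst}).

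For (\ref{eq:vsecond}), fix $(t,\mathbf{x}) \in (0,T] \times \mathbf{R}^n$, $\mathbf{y} \in V(t,\mathbf{x})$, and an arbitrary $(\mathbf{f},\mathbf{L}) \in (\mathrm{FL})(\mathbf{x})$; here the construction runs backward in time. For $h_k \to 0^+$ with $t-h_k \geq 0$, I plan to construct an initial state $\mathbf{x}_k = \mathbf{x}-h_k\mathbf{f}+o(h_k)$ together with a control $\mathbf{u}_k(\cdot)$ on $[t-h_k,t]$ whose trajectory from $\mathbf{x}_k$ at time $t-h_k$ reaches $\mathbf{x}$ exactly at time $t$ with integrated cost $h_k\mathbf{L}+o(h_k)$. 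Since $(\mathbf{f},\mathbf{L})$ lies in the closed convex hull of $\{(\mathbf{f}(\mathbf{x},\mathbf{u}),\mathbf{L}(\mathbf{x},\mathbf{u})) : \mathbf{u} \in U\}$, a standard relaxation (chattering) argument, supported by the estimates of Propositions \ref{prop:trajestim} and \ref{prop:costestim}, delivers such a $\mathbf{u}_k(\cdot)$. Selecting in addition $\widetilde{\mathbf{u}}(\cdot) \in \mathcal{U}$ with $\|\mathbf{J}(t,T,\mathbf{x},\widetilde{\mathbf{u}}(\cdot))-\mathbf{y}\| \leq h_k^2$ (available since $\mathbf{y} \in \mathrm{cl}(Y(t,\mathbf{x}))$) and concatenating $\mathbf{u}_k(\cdot)$ with $\widetilde{\mathbf{u}}(\cdot)$ produces an element of $Y(t-h_k,\mathbf{x}_k)$ of the form $\mathbf{y}+h_k\mathbf{L}+o(h_k)$. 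External stability of the compact set $\mathrm{cl}(Y(t-h_k,\mathbf{x}_k))$ (Corollary \ref{cor:extstabcompactset}) then places this point in $V(t-h_k,\mathbf{x}_k)+P = V_\uparrow(t-h_k,\mathbf{x}_k)$, which yields the triple $(-1,-\mathbf{f},\mathbf{L}) \in T_{\mathrm{Graph}(V_\uparrow)}(t,\mathbf{x},\mathbf{y})$ required by (\ref{eq:vsecond}).

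The main obstacle will be this backward construction. Unlike in (\ref{eq:vfirst}), where the pair $(\overline{\mathbf{f}},\overline{\mathbf{L}})$ is permitted to be produced by the argument, here the pair $(\mathbf{f},\mathbf{L}) \in (\mathrm{FL})(\mathbf{x})$ is prescribed and is in general only the limit of convex combinations of admissible velocity-cost pairs rather than an admissible pair itself. One must therefore manufacture controls whose averaged drift and running cost reproduce the prescribed $(\mathbf{f},\mathbf{L})$ up to $o(h_k)$, whose endpoint at time $t$ coincides with $\mathbf{x}$ exactly, and whose every residual error is absorbed in the $o(h_k)$ tolerance demanded by the contingent cone. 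This is precisely where the closed convex hull in the definition of $(\mathrm{FL})(\mathbf{x})$ is essential and where the Lipschitz estimates of \S\ref{s:P} have to be combined with care.
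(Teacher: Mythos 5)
Your proposal is correct and follows essentially the same route as the paper: the forward argument for (\ref{eq:vfirst}) via the dynamic programming inclusion, the estimates of Lemma \ref{lem:estimate}, and compactness of $(\mathrm{FL})(\mathbf{x})$, and the backward-in-time trajectory construction plus external stability for (\ref{eq:vsecond}). The ``main obstacle'' you flag --- realizing a prescribed $(\mathbf{f},\mathbf{L})\in(\mathrm{FL})(\mathbf{x})$ as the limit of time-averages of admissible velocity--cost pairs --- is exactly what the paper disposes of by citing Lemma \ref{lem:flx} from the literature, so your chattering argument is the intended (and standard) resolution.
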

\begin{proof}
First, we need to show that $V$ is an extremal element map.  The fact that (\ref{eq:property1})  and (\ref{eq:property2}) hold follows directly from the definition of $V$. It remains to show that $V$ satisfies (\ref{eq:vfirst})-(\ref{eq:tercon1}). The proofs for (\ref{eq:vfirst}) and (\ref{eq:vsecond}) are respectively postponed to \S \ref{ss:SVAsuper} and \S \ref{ss:SVAsub}. For (\ref{eq:tercon1}), this again follows directly from the definition of $V$.
%From Corollary \ref{cor:extstabcompactset}, the set $\mathrm{cl}(Y(t,\mathbf{x}))$ is externally stable. Hence, $$V(t,\mathbf{x}) + P =  \mathrm{cl}(Y(t,\mathbf{x})) +P,$$ which is a closed set as $\mathrm{cl}(Y(t,\mathbf{x}))$ is compact and $P$ is closed.
\end{proof}\\

\begin{proposition} \label{prop:outersemi}
The set-valued map $V_\uparrow$ is outer semicontinuous on $I \times \mathbf{R}^n$.
\end{proposition}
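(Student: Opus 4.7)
The plan is to reduce outer semicontinuity of $V_\uparrow$ to a statement about the sets $\mathrm{cl}(Y(t,\mathbf{x}))+P$, which are much easier to handle than the minimal-element sets themselves, and then apply the estimate between objective spaces from Corollary~\ref{prop:objspesti} together with compactness of $\mathrm{cl}(Y(t,\mathbf{x}))$.

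First I would observe that for each $(t,\mathbf{x})$ the set $\mathrm{cl}(Y(t,\mathbf{x}))$ is compact (it is closed and, by (\ref{eq:Lbounded}), bounded). By Corollary~\ref{cor:extstabcompactset} it is externally stable, so
\[
V_\uparrow(t,\mathbf{x}) = V(t,\mathbf{x}) + P = \mathcal{E}(\mathrm{cl}(Y(t,\mathbf{x})),P) + P = \mathrm{cl}(Y(t,\mathbf{x})) + P.
\]
Thus it suffices to prove outer semicontinuity of $(t,\mathbf{x}) \mapsto \mathrm{cl}(Y(t,\mathbf{x})) + P$.

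Next, fix $(t,\mathbf{x}) \in I \times \mathbf{R}^n$ and take sequences $(t_k,\mathbf{x}_k) \to (t,\mathbf{x})$ and $\mathbf{y}_k \to \mathbf{y}$ with $\mathbf{y}_k \in V_\uparrow(t_k,\mathbf{x}_k)$. Writing $\mathbf{y}_k = \mathbf{z}_k + \mathbf{p}_k$ with $\mathbf{z}_k \in \mathrm{cl}(Y(t_k,\mathbf{x}_k))$ and $\mathbf{p}_k \in P$, I would apply Corollary~\ref{prop:objspesti} to obtain
\[
\mathbf{z}_k \in \mathrm{cl}(Y(t,\mathbf{x})) + \varepsilon_k \mathbf{B},
\]
where $\varepsilon_k = \tfrac{K_\mathbf{L}}{K_\mathbf{f}} \exp(K_\mathbf{f} T)\|\mathbf{x}_k - \mathbf{x}\| + M_\mathbf{L}|t_k - t| \to 0$. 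So $\mathbf{z}_k = \mathbf{\tilde z}_k + \mathbf{b}_k$ with $\mathbf{\tilde z}_k \in \mathrm{cl}(Y(t,\mathbf{x}))$ and $\|\mathbf{b}_k\| \leq \varepsilon_k$. By compactness of $\mathrm{cl}(Y(t,\mathbf{x}))$ we may pass to a subsequence so that $\mathbf{\tilde z}_k \to \mathbf{\tilde z} \in \mathrm{cl}(Y(t,\mathbf{x}))$.

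Finally, from $\mathbf{p}_k = \mathbf{y}_k - \mathbf{\tilde z}_k - \mathbf{b}_k$ and $\mathbf{b}_k \to \mathbf{0}$, the sequence $\mathbf{p}_k$ converges to $\mathbf{y} - \mathbf{\tilde z}$; since $P$ is closed, $\mathbf{y} - \mathbf{\tilde z} \in P$, whence
\[
\mathbf{y} = \mathbf{\tilde z} + (\mathbf{y} - \mathbf{\tilde z}) \in \mathrm{cl}(Y(t,\mathbf{x})) + P = V_\uparrow(t,\mathbf{x}).
\]
The proof is essentially routine once the reduction via external stability is made; the only mildly delicate point is ensuring the $P$-component $\mathbf{p}_k$ stays bounded so that the limit sits in $P$, which follows automatically because $\mathbf{y}_k$ converges and $\mathbf{\tilde z}_k$ lives in a compact set.
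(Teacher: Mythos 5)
Your proposal is correct and follows the paper's own route: reduce via external stability to showing outer semicontinuity of $(t,\mathbf{x}) \mapsto \mathrm{cl}(Y(t,\mathbf{x}))+P$, then use the Lipschitz-type estimate of Corollary~\ref{prop:objspesti} together with the closedness coming from compactness of $\mathrm{cl}(Y(t,\mathbf{x}))$ and closedness of $P$. You merely spell out the sequential argument that the paper compresses into its final sentence (with the trivially checked extra step that the estimate passes to closures), so the two proofs are essentially identical.
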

\begin{proof}
Let $(t,\mathbf{x}) \in I \times \mathbf{R}^n$. From Corollary \ref{cor:extstabcompactset}, the set $\mathrm{cl}(Y(t,\mathbf{x}))$ is externally stable. Hence, $$V(t,\mathbf{x}) + P =  \mathrm{cl}(Y(t,\mathbf{x})) +P.$$ Therefore, showing that the set-valued map $V_\uparrow$ is outer semicontinuous at $(t,\mathbf{x})$ amounts to showing that the set-valued map $\mathrm{cl}(Y(\cdot,\cdot)) +P$ is outer semicontinuous at $(t,\mathbf{x})$. But, this follows from
the fact that the set $\mathrm{cl}(Y(t,\mathbf{x})) +P$ is closed and the set-valued map $Y$ is Lipschitz
(Corollary \ref{prop:objspesti}).
\end{proof}
%% Question: mention the following? From the proof of Proposition \ref{prop:outersemi}, note that it could be concluded that $V_\uparrow$
%% is in fact continuous. \\
%% Remark: for Lipschitz, see comments above.

\subsection{$V$  satisfies (\ref{eq:vfirst})} \label{ss:SVAsuper}
For the developments below, we need the two   estimates contained in the following lemma.\\

\begin{lemma} \label{lem:estimate}
For $(t,\mathbf{x}) \in [0,T) \times \mathbf{R}^n$ and $\mathbf{u}(\cdot) \in \mathcal{U}$, one has
\begin{equation} \label{eq:firstest}
\frac{1}{\tau} \int_t^{t+\tau} \mathbf{f}(\mathbf{x}(s;t,\mathbf{x},\mathbf{u}(\cdot)),\mathbf{u}(s)) \ \mathrm{d}s  = \frac{1}{\tau}  \int_t^{t+\tau} \mathbf{f}(\mathbf{x},\mathbf{u}(s)) \ \mathrm{d}s + o(\mathbf{1}),
\end{equation}
and
\begin{equation} \label{eq:secondest}
\frac{1}{\tau} \int_t^{t+\tau} \mathbf{L}(\mathbf{x}(s;t,\mathbf{x},\mathbf{u}(\cdot)),\mathbf{u}(s)) \ \mathrm{d}s  = \frac{1}{\tau}  \int_t^{t+\tau} \mathbf{L}(\mathbf{x},\mathbf{u}(s)) \ \mathrm{d}s + o(\mathbf{1}),
\end{equation}
as $\tau \rightarrow 0^+$ independently of $\mathbf{u}(\cdot)$.
\end{lemma}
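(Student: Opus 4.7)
The plan is to prove both estimates by a direct computation that reduces everything to the Lipschitz continuity of $\mathbf{f}$ and $\mathbf{L}$ in the state variable combined with the uniform bound on $\mathbf{f}$. The point is that $\mathbf{x}(s;t,\mathbf{x},\mathbf{u}(\cdot))$ cannot drift far from $\mathbf{x}$ over a short interval $[t,t+\tau]$, regardless of the control.

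First, I would bound the state displacement. By (\ref{eq:fbounded}) and the integral form of (\ref{eq:nec1}), for every $s \in [t,t+\tau]$,
\begin{displaymath}
\|\mathbf{x}(s;t,\mathbf{x},\mathbf{u}(\cdot)) - \mathbf{x}\| = \bigg\|\int_t^s \mathbf{f}(\mathbf{x}(r;t,\mathbf{x},\mathbf{u}(\cdot)),\mathbf{u}(r))\ \mathrm{d}r \bigg\|  \leq M_\mathbf{f}(s-t) \leq M_\mathbf{f}\tau.
\end{displaymath}
This estimate is uniform in $\mathbf{u}(\cdot)$.

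Next, to prove (\ref{eq:firstest}) I would apply the Lipschitz condition (\ref{eq:lipf}) pointwise in $s$ and integrate. For each $s \in [t,t+\tau]$,
\begin{displaymath}
\|\mathbf{f}(\mathbf{x}(s;t,\mathbf{x},\mathbf{u}(\cdot)),\mathbf{u}(s)) - \mathbf{f}(\mathbf{x},\mathbf{u}(s))\| \leq K_\mathbf{f}\|\mathbf{x}(s;t,\mathbf{x},\mathbf{u}(\cdot)) - \mathbf{x}\| \leq K_\mathbf{f} M_\mathbf{f} \tau.
\end{displaymath}
Averaging over $[t,t+\tau]$ yields
\begin{displaymath}
\bigg\|\frac{1}{\tau}\int_t^{t+\tau} \mathbf{f}(\mathbf{x}(s;t,\mathbf{x},\mathbf{u}(\cdot)),\mathbf{u}(s))\ \mathrm{d}s - \frac{1}{\tau}\int_t^{t+\tau} \mathbf{f}(\mathbf{x},\mathbf{u}(s))\ \mathrm{d}s\bigg\| \leq K_\mathbf{f} M_\mathbf{f} \tau,
\end{displaymath}
which is $o(\mathbf{1})$ as $\tau \rightarrow 0^+$, independently of $\mathbf{u}(\cdot)$. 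The identical argument with $K_\mathbf{L}$ in place of $K_\mathbf{f}$, using (\ref{eq:LLipschitz}), gives (\ref{eq:secondest}) with remainder bounded by $K_\mathbf{L} M_\mathbf{f} \tau$.

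There is no real obstacle here; the statement is a routine consequence of Lipschitz continuity in the state variable together with the global bound on the dynamics. The only subtlety worth flagging is that one uses the bound on $\mathbf{f}$ (not a bound on the trajectory itself) to control the state displacement uniformly in the control, which is precisely why the remainder is $o(\mathbf{1})$ uniformly in $\mathbf{u}(\cdot) \in \mathcal{U}$.
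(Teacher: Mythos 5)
Your proof is correct and follows exactly the route the paper intends: it bounds the state displacement by $M_\mathbf{f}\tau$ using (\ref{eq:fbounded}) and then applies the Lipschitz conditions (\ref{eq:lipf}) and (\ref{eq:LLipschitz}), which is precisely what the paper's (much terser) proof invokes. Your version simply spells out the uniform-in-control estimate that the paper leaves implicit.
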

\begin{proof}
The first estimate (\ref{eq:firstest}) follows from the Lipschitz assumption, see (\ref{eq:lipf}), and the boundedness assumption, see (\ref{eq:fbounded}), on $\mathbf{f}$,
while the second estimate (\ref{eq:secondest}) follows from the Lipschitz assumption on $\mathbf{L}$, see (\ref{eq:LLipschitz}), and the boundedness assumption
 on $\mathbf{f}$, see (\ref{eq:fbounded}).
\end{proof}\\

Let $(t,\mathbf{x}) \in [0,T) \times \mathbf{R}^n$ and $\mathbf{y} \in V(t,\mathbf{x})$.
We have  $\mathbf{y} \in V(t,\mathbf{x}) \subset \mathrm{cl}(Y(t,\mathbf{x}))$. Hence,
$\forall \epsilon_k \rightarrow 0^+,$ there exists $\mathbf{\widetilde{y}}_k\in Y(t,\mathbf{x})$
such that $ \|\mathbf{y} - \mathbf{\widetilde{y}}_k \| < \epsilon_k$. Choose a sequence $\tau_k \rightarrow 0^+$
such that $\epsilon_k /\tau_k \rightarrow 0^+$ (e.g. $\tau_k = \sqrt{\epsilon_k}$).
From Lemma \ref{lem:dp2}, there exists $\mathbf{u}_k(\cdot) \in \mathcal{U}$ such that
\begin{displaymath}
\mathbf{\widetilde{y}}_k \in \int_t^{t+\tau_k} \mathbf{L}(\mathbf{x}(s;t,\mathbf{x},\mathbf{u}_k(\cdot)),\mathbf{u}_k(s)) \ \mathrm{d}s +
V(t+\tau_k,\mathbf{x}(t+\tau_k;t,\mathbf{x},\mathbf{u}_k(\cdot))) + P,\end{displaymath}
or
\begin{equation} \label{eq:start2_1}
\mathbf{y} \in \int_t^{t+\tau_k} \mathbf{L}(\mathbf{x}(s;t,\mathbf{x},\mathbf{u}_k(\cdot)),\mathbf{u}_k(s)) \ \mathrm{d}s +
V(t+\tau_k,\mathbf{x}(t+\tau_k;t,\mathbf{x},\mathbf{u}_k(\cdot))) + P + \epsilon_k \mathbf{B}.\end{equation}
From (\ref{eq:firstest}) and (\ref{eq:secondest}), we have:
$$ \mathbf{x}(t+\tau_k;t,\mathbf{x},\mathbf{u}_k(\cdot))  = \mathbf{x} + \int_t^{t+\tau_k} \mathbf{f}(\mathbf{x},\mathbf{u}_k(s)) \ \mathrm{d}s + \tau_k o(\mathbf{1}),$$
and
$$ \int_t^{t+\tau_k} \mathbf{L}(\mathbf{x}(s;t,\mathbf{x},\mathbf{u}_k(\cdot)),\mathbf{u}_k(s)) \ \mathrm{d}s  =
\int_t^{t+\tau_k} \mathbf{L}(\mathbf{x},\mathbf{u}_k(s)) \ \mathrm{d}s + \tau_k o(\mathbf{1}),$$
independently from $\mathbf{u}_k(\cdot)$. Moreover,
$$\bigg(\frac{1}{\tau_k}\displaystyle \int_t^{t+\tau_k} \mathbf{L}(\mathbf{x},\mathbf{u}_k(s)) \ \mathrm{d}s, \frac{1}{\tau_k}\int_t^{t+\tau_k} \mathbf{f}(\mathbf{x},\mathbf{u}_k(s)) \ \mathrm{d}s\bigg)  \in (\mathrm{FL})(\mathbf{x}).$$
From Lemma \ref{lem:lemma2}, the set $(\mathrm{FL})(\mathbf{x})$ is compact. Hence, by passing to
a subsequence if necessary, we can assume  that there exists $(\mathbf{\overline{f}},\mathbf{\overline{L}}) \in (\mathrm{FL})(\mathbf{x})$ such that
$$\bigg(\frac{1}{\tau_k}\displaystyle \int_t^{t+\tau_k} \mathbf{L}(\mathbf{x},\mathbf{u}_k(s)) \ \mathrm{d}s, \frac{1}{\tau_k}\int_t^{t+\tau_k} \mathbf{f}(\mathbf{x},\mathbf{u}_k(s)) \ \mathrm{d}s\bigg) \rightarrow (\mathbf{\overline{f}},\mathbf{\overline{L}})\ \mathrm{as} \ k \rightarrow \infty.$$
Hence, we can write
$$ \mathbf{x}(t+\tau_k;t,\mathbf{x},\mathbf{u}_k(\cdot)) = \mathbf{x} + \tau_k (\mathbf{\overline{f}} + o(\mathbf{1})),$$
and
$$\int_t^{t+\tau_k} \mathbf{L}(\mathbf{x}(s;t,\mathbf{x},\mathbf{u}_k(\cdot)),\mathbf{u}_k(s)) \ \mathrm{d}s = \tau_k(\mathbf{\overline{L}}  + o(\mathbf{1})).$$
 Substituting these two equalities into $(\ref{eq:start2_1})$ yields
 $$
 \mathbf{y}  + \tau_k(-\mathbf{\overline{L}}+ o(\mathbf{1})) \in  V(t+ \tau_k,\mathbf{x} + \tau_k (\mathbf{\overline{f}} + o(\mathbf{1}))) + P+\epsilon_k \mathbf{B},
 $$or using the fact that $\epsilon_k = o(\tau_k)$,
 $$
 \mathbf{y}  + \tau_k(-\mathbf{\overline{L}}+ o(\mathbf{1})) \in  V(t+ \tau_k,\mathbf{x} + \tau_k (\mathbf{\overline{f}} + o(\mathbf{1}))) + P.
 $$
But this precisely corresponds to the definition of the contingent
derivative of $V_\uparrow$ at $(t,\mathbf{x},\mathbf{y})$  in the direction $(1,\mathbf{\overline{f}}).$ Hence,
$$
 -\mathbf{\overline{L}} \in DV_\uparrow((t,\mathbf{x},\mathbf{y});(1,\overline{\mathbf{f}})),
$$
which is (\ref{eq:vfirst}).

\subsection{$V$  satisfies (\ref{eq:vsecond})} \label{ss:SVAsub}
For the developments below, we need  the following technical lemma.\\

\begin{lemma}[\emph{\emph{\cite[p.~129]{OptConTh8_book}}}] \label{lem:flx}
Let $(t,\mathbf{x}) \in (0,T] \times \mathbf{R}^n,$  we have
\begin{multline} \nonumber
(\mathrm{FL})(\mathbf{x}) = \bigg\{\mathbf{g} = (\mathbf{f},\mathbf{L}) \in \mathbf{R}^{n+p}: \\ \mathbf{g} = \lim_{k\rightarrow\infty} \frac{1}{\tau_k} \int_{t-\tau_k}^t (\mathbf{f}(\mathbf{x},\mathbf{u}_k(s)),\mathbf{L}(\mathbf{x},\mathbf{u}_k(s))) \ \mathrm{d}s \ \mathrm{for} \ \mathrm{some} \ \tau_k\rightarrow 0^+ \ \mathrm{and} \ \mathbf{u}_k(\cdot) \in \mathcal{U} \bigg\}.
 \end{multline}
\end{lemma}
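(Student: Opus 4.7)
The plan is to prove the set equality by two opposite inclusions. Write $A(\mathbf{x})=\{(\mathbf{f}(\mathbf{x},\mathbf{u}),\mathbf{L}(\mathbf{x},\mathbf{u})):\mathbf{u}\in U\}$, which is compact since $U$ is compact and $\mathbf{f}(\mathbf{x},\cdot),\mathbf{L}(\mathbf{x},\cdot)$ are continuous. Then $(\mathrm{FL})(\mathbf{x})=\mathrm{cl}(\mathrm{co}(A(\mathbf{x})))$. Call $\Gamma(\mathbf{x})$ the set on the right-hand side.

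For the inclusion $\Gamma(\mathbf{x})\subset(\mathrm{FL})(\mathbf{x})$, I would fix $\tau_k\to 0^+$ and $\mathbf{u}_k(\cdot)\in\mathcal{U}$, and observe that the map $s\mapsto(\mathbf{f}(\mathbf{x},\mathbf{u}_k(s)),\mathbf{L}(\mathbf{x},\mathbf{u}_k(s)))$ is measurable with values in $A(\mathbf{x})$. The classical mean-value property for Bochner integrals then gives
$$\frac{1}{\tau_k}\int_{t-\tau_k}^{t}(\mathbf{f}(\mathbf{x},\mathbf{u}_k(s)),\mathbf{L}(\mathbf{x},\mathbf{u}_k(s)))\,\mathrm{d}s\in\mathrm{cl}(\mathrm{co}(A(\mathbf{x})))=(\mathrm{FL})(\mathbf{x}),$$
(if desired, this is justified by a separating hyperplane argument: any supporting affine form of $\mathrm{cl}(\mathrm{co}(A(\mathbf{x})))$ bounds the pointwise values, hence the averages). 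Since $(\mathrm{FL})(\mathbf{x})$ is closed by definition, the limit as $k\to\infty$ also lies in $(\mathrm{FL})(\mathbf{x})$.

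For the reverse inclusion $(\mathrm{FL})(\mathbf{x})\subset\Gamma(\mathbf{x})$, let $\mathbf{g}\in(\mathrm{FL})(\mathbf{x})$; by definition there exist $\mathbf{g}_k\in\mathrm{co}(A(\mathbf{x}))$ with $\mathbf{g}_k\to\mathbf{g}$. Each $\mathbf{g}_k$ is a finite convex combination $\mathbf{g}_k=\sum_{i=1}^{n_k}\lambda_i^{k}(\mathbf{f}(\mathbf{x},\mathbf{u}_i^{k}),\mathbf{L}(\mathbf{x},\mathbf{u}_i^{k}))$ with $\lambda_i^{k}\ge 0$ and $\sum_i\lambda_i^{k}=1$. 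Pick any sequence $\tau_k\to 0^+$ with $\tau_k\le t$, partition $[t-\tau_k,t]$ into consecutive subintervals of lengths $\tau_k\lambda_i^{k}$ ($i=1,\dots,n_k$), and define $\mathbf{u}_k(s)=\mathbf{u}_i^{k}$ on the $i$-th subinterval (extended arbitrarily to all of $I$ so that $\mathbf{u}_k(\cdot)\in\mathcal{U}$). A direct computation gives
$$\frac{1}{\tau_k}\int_{t-\tau_k}^{t}(\mathbf{f}(\mathbf{x},\mathbf{u}_k(s)),\mathbf{L}(\mathbf{x},\mathbf{u}_k(s)))\,\mathrm{d}s=\sum_{i=1}^{n_k}\lambda_i^{k}(\mathbf{f}(\mathbf{x},\mathbf{u}_i^{k}),\mathbf{L}(\mathbf{x},\mathbf{u}_i^{k}))=\mathbf{g}_k,$$
whose limit is $\mathbf{g}$, exhibiting $\mathbf{g}\in\Gamma(\mathbf{x})$.

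Neither direction presents a serious obstacle. If anything, the delicate point is the first inclusion, where one must justify that an integral average of a measurable selection of a compact set lies in the closed convex hull of that set; this is a standard Krein–Milman/separation-theorem argument but is the one nontrivial ingredient. The rest is just bookkeeping: exploiting compactness of $U$ and the continuity of $\mathbf{f},\mathbf{L}$ for the closedness of $A(\mathbf{x})$, and using piecewise-constant controls on shrinking intervals $[t-\tau_k,t]$ (which fit inside $I$ because $t>0$) to realize prescribed convex combinations exactly.
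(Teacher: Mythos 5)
Your proof is correct. Note that the paper itself offers no proof of this lemma --- it is simply cited from \cite[p.~129]{OptConTh8_book} --- so there is nothing internal to compare against; your two-inclusion argument (the separation-theorem/mean-value fact that the average of a measurable $A(\mathbf{x})$-valued selection lies in $\mathrm{cl}(\mathrm{co}(A(\mathbf{x})))$, and the piecewise-constant ``chattering'' controls on $[t-\tau_k,t]$ realizing any convex combination exactly) is the standard and complete way to establish it. The only points worth making explicit are the ones you already flag: $\tau_k\le t$ so the subintervals stay in $I$ (this is where the hypothesis $t>0$ enters), and measurability of $s\mapsto(\mathbf{f}(\mathbf{x},\mathbf{u}_k(s)),\mathbf{L}(\mathbf{x},\mathbf{u}_k(s)))$ as the composition of a measurable control with a continuous map.
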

Let $(t,\mathbf{x}) \in (0,T] \times \mathbf{R}^n$, $\mathbf{u}(\cdot) \in \mathcal{U}$, and $\mathbf{y} \in V(t,\mathbf{x})$.
Let $\delta \rightarrow \mathbf{z}(\delta;0,\mathbf{x},\mathbf{u}(\cdot))$ be the solution to
$$
\left \{ \begin{array}{lcl} \mathbf{\dot{z}}(\delta)& = & -\mathbf{f}(\mathbf{z}(\delta),\mathbf{u}(t - \delta)), \ 0 \leq \delta  \leq \tau, \\
\mathbf{z}(0) & = & \mathbf{x}.\end{array}\right.
$$
The function $\delta \rightarrow \mathbf{z}(\delta;0,\mathbf{x},\mathbf{u}(\cdot))$ and the trajectory
$s \rightarrow \mathbf{x}(s;t-\tau,\mathbf{z}(\tau;0,\mathbf{x},\mathbf{u}(\cdot)),\mathbf{u}(\cdot))$ satisfy
 $$\mathbf{x}(t-\delta;t-\tau,\mathbf{z}(\tau;0,\mathbf{x},\mathbf{u}(\cdot)),\mathbf{u}(\cdot))
 = \mathbf{z}(\delta;0,\mathbf{x},\mathbf{u}(\cdot)),
 \ 0 \leq \delta  \leq \tau,$$ and therefore,
 $\mathbf{x}(t;t-\tau,\mathbf{z}(\tau;0,\mathbf{x},\mathbf{u}(\cdot)),\mathbf{u}(\cdot))
 = \mathbf{z}(0;0,\mathbf{x},\mathbf{u}(\cdot))= \mathbf{x}$. Hence, applying Lemma \ref{lem:dp1}
to the trajectory $s \rightarrow \mathbf{x}(s;t-\tau,\mathbf{z}(\tau;0,\mathbf{x},\mathbf{u}(\cdot)),\mathbf{u}(\cdot))$
on the interval $[t-\tau,t]$ gives
\begin{equation} \nonumber
\int_{t-\tau}^t \mathbf{L}(\mathbf{x}(s;t-\tau,\mathbf{z}(\tau;0,\mathbf{x},\mathbf{u}(\cdot)),\mathbf{u}(\cdot)),\mathbf{u}(s)) \ \mathrm{d}s + V(t,\mathbf{x}) \subset \mathrm{cl}(Y(t-\tau,\mathbf{z}(\tau;0,\mathbf{x},\mathbf{u}(\cdot)))),
\end{equation}
or,  as $\mathrm{cl}(Y(t,\mathbf{x}))$ is externally stable from Corollary \ref{cor:extstabcompactset},
\begin{equation} \label{eq:start2}
\int_{t-\tau}^t \mathbf{L}(\mathbf{x}(s;t-\tau,\mathbf{z}(\tau;0,\mathbf{x},\mathbf{u}(\cdot)),\mathbf{u}(\cdot)),\mathbf{u}(s)) \ \mathrm{d}s + V(t,\mathbf{x}) \subset V(t-\tau,\mathbf{z}(\tau;0,\mathbf{x},\mathbf{u}(\cdot))) +P.
\end{equation}
Now, let $(\mathbf{f},\mathbf{L}) \in (\mathrm{FL})(\mathbf{x})$.  From Lemma~\ref{lem:flx},  there exist
$\tau_k \rightarrow 0^+$ and $\mathbf{u}_k(\cdot) \in \mathcal{U}$ such that
$$ \mathbf{f} = \lim_{k\rightarrow\infty} \frac{1}{\tau_k} \int_{t-\tau_k}^{t} \mathbf{f}(\mathbf{x},\mathbf{u}_k(s)) \ \mathrm{d}s, \ \mathrm{and} \  \mathbf{L} = \lim_{k\rightarrow\infty} \frac{1}{\tau_k} \int_{t-\tau_k}^{t} \mathbf{L}(\mathbf{x},\mathbf{u}_k(s)) \ \mathrm{d}s.$$
We have:
$$\mathbf{z}(\tau_k;0,\mathbf{x},\mathbf{u}_k(\cdot))  = \mathbf{x} - \int_0^{\tau_k} \mathbf{f}(\mathbf{z}(\delta;0,\mathbf{x},\mathbf{u}_k(\cdot)),\mathbf{u}_k(t - \delta)) \ \mathrm{d}\delta.$$
As in \S \ref{ss:SVAsuper}, it can be shown that
$$ \mathbf{z}(\tau;0,\mathbf{x},\mathbf{u}(\cdot))  = \mathbf{x} - \tau_k(\mathbf{f}+o(\mathbf{1})),
$$
and
$$
\int_{t-\tau_k}^t \mathbf{L}(\mathbf{x}(s;t-\tau_k,\mathbf{z}(\tau_k;0,\mathbf{x},\mathbf{u}_k(\cdot)),\mathbf{u}_k(\cdot)),\mathbf{u}_k(s)) \ \mathrm{d}s= \tau_k(\mathbf{L}  + o(\mathbf{1})),$$
as $\tau_k \rightarrow 0^+$ independently of $\mathbf{u}_k(\cdot)$.
Substituting these two equalities into (\ref{eq:start2}) yields
$$
\tau_k(\mathbf{L}  + o(\mathbf{1})) + V(t,\mathbf{x})  \in V(t-\tau_k,\mathbf{x}-\tau_k(\mathbf{f}  + o(\mathbf{1})))   + P,
$$
or
$$
\mathbf{y} + \tau_k(\mathbf{L}  + o(\mathbf{1}))  \in V(t-\tau_k,\mathbf{x}-\tau_k(\mathbf{f}  + o(\mathbf{1})) )   + P.
$$
But this precisely corresponds to the definition of the contingent
derivative of $V_\uparrow$ at $(t,\mathbf{x},\mathbf{y})$  in the direction $(-1,-\mathbf{f}).$ Hence,
$$
\forall (\mathbf{f},\mathbf{L}) \in (\mathrm{FL})(\mathbf{x}), \  \mathbf{L} \in DV_\uparrow((t,\mathbf{x},\mathbf{y});(-1,-\mathbf{f})),
$$
which is (\ref{eq:vsecond}).

\section{Generalized proximal solution for (MOC)} \label{s:proximal}

The notion of proximal solution to the Hamilton-Jacobi Equation for
finite-horizon single objective optimal control problems was introduced
in \cite[p.~454]{OptConTh5_book}.
In this section, using the concept of coderivative for set-valued maps,
we extend this notion to (MOC). We call this extended notion generalized proximal
solution for (MOC).

\subsection{Definition}

\begin{definition} \label{def:genproximalsolution}
An extremal element map from $I \times \mathbf{R}^n$ to $\mathbf{R}^p$ is said to be a generalized proximal solution
for  \emph{(MOC)} if:\\
\begin{itemize}
  \item  For all $(t,\mathbf{x}) \in (0,T) \times \mathbf{R}^n$, for all $\mathbf{y} \in W(t,\mathbf{x}),$
and for all $\mathbf{w^*} \in \mathbf{R}^p$ such that $D^*W_\uparrow(t,\mathbf{x},\mathbf{y})(\mathbf{w^*}) \neq \emptyset$,
 \begin{equation} \label{eq:proximal_equation}
\forall  (\xi^*,\mathbf{v^*}) \in D^*W_\uparrow(t,\mathbf{x},\mathbf{y})(\mathbf{w^*}), \ \xi^*+
\inf_{(\mathbf{f},\mathbf{L}) \in (\mathrm{FL})(\mathbf{x})} \langle\mathbf{
v^*},\mathbf{f}  \rangle + \langle \mathbf{w^*}, \mathbf{L} \rangle = 0.
\end{equation}
  \item For all $\mathbf{x} \in  \mathbf{R}^n$, \begin{equation} \label{eq:boundary_condition}
  W_\uparrow(0,\mathbf{x}) \subset \limsup_{t \rightarrow  0^+, \ \mathbf{x'}\rightarrow \mathbf{x}} W_\uparrow(t,\mathbf{x'}) \ \mathrm{and} \
  W_\uparrow(T,\mathbf{x}) \subset \limsup_{t \rightarrow  T^-,\ \mathbf{x'}\rightarrow \mathbf{x}} W_\uparrow(t,\mathbf{x'}).\end{equation}
  \item For all $\mathbf{x} \in  \mathbf{R}^n$,
\begin{equation} \label{eq:tercon2}
W(T,\mathbf{x}) = \{\mathbf{0}\}.
\end{equation}
\end{itemize}
\end{definition}

\subsection{The case $p=1$}

In \S \ref{ss:casepegal1}, we have shown that  $W(t,\mathbf{x}) = \{w(t,\mathbf{x})\},$
where $w$ is a function from $I \times \mathbf{R}^n$ to $\mathbf{R}$. Hence,
recalling that $P = \mathbf{R}^+$, we have $\mathrm{gph}(W_\uparrow) = \mathrm{epi}(w)$. Therefore,
 $N_{\mathrm{gph}(W_\uparrow)}(t,\mathbf{x}, w(t,\mathbf{x})) = N_{\mathrm{epi}(w)}(t,\mathbf{x},w(t,\mathbf{x})),$
or
\begin{equation} \label{eq:equality}
\partial w(t,\mathbf{x}) = D^*W_\uparrow(t,\mathbf{x},w(t,\mathbf{x}))(1),
\end{equation}
where $\partial w(t,\mathbf{x})$ is the proximal subdifferential of $w$ at $(t,\mathbf{x})$ \cite[P.~135]{OptConTh5_book}. Note that, again, we use the notation $\partial$  for the proximal subdifferential instead of the  traditional notation $\partial^P$ to avoid any possible confusion with the ordering cone $P$.
Substituting (\ref{eq:equality}) in  (\ref{eq:proximal_equation}) yields
\begin{equation} \nonumber
\forall  (\xi^*,\mathbf{v^*}) \in
\partial w(t,\mathbf{x}), \ \xi^*+ \inf_{(\mathbf{f},L) \in (\mathrm{FL})(\mathbf{x})} \langle\mathbf{v^*},\mathbf{f}  \rangle +  L  = 0,
\end{equation}
To retrieve the notion of proximal solution for
finite-horizon single objective optimal control problems, it remains to show
that~(\ref{eq:boundary_condition}) yields:
$$ w(0,\mathbf{x}) \geq \liminf_{t \rightarrow  0^+, \ \mathbf{x'}\rightarrow \mathbf{x}} w(t,\mathbf{x'}) \ \mathrm{and} \  w(T,\mathbf{x}) \geq \liminf_{t \rightarrow  T^-, \ \mathbf{x'}\rightarrow \mathbf{x}} w(t,\mathbf{x'}).$$
We only prove below the first inequality. From (\ref{eq:boundary_condition}), there exist $t_k \rightarrow 0^+, \  \mathbf{x}_k \rightarrow \mathbf{x}, \ y_k \rightarrow w(0,\mathbf{x})$ such that $y_k \in W_\uparrow(t_k,\mathbf{x}_k) = \{w(t_k,\mathbf{x}_k)\} + \mathbf{R}^+$. Hence, $y_k \geq w(t_k,\mathbf{x}_k)$,
from which follows that $w(0,\mathbf{x}) \geq \liminf_{t \rightarrow  0^+, \ \mathbf{x'}\rightarrow \mathbf{x}} w(t,\mathbf{x'})$.

\subsection{From generalized contingent solution to generalized proximal solution}

In this section, we show that if a set-valued map is a generalized contingent solution for (MOC), then it is also
a generalized proximal solution for (MOC).\\

%% The lemma below is not needed
%%\begin{lemma}
%%Let $W$ be a set-valued map from $(0,T)\times \mathbf{R}^n$ to $\mathbf{R}^p$ and
%%$(t,\mathbf{x}) \in (0,T)\times \mathbf{R}^n.$ If $D^*W_\uparrow(t,\mathbf{x},\mathbf{y})(\mathbf{w^*}) \neq \emptyset$,
%%then $\mathbf{w^*} \in P^{+},$
%%where $P^{+}$ denotes the positive polar of $P$.
%%\end{lemma}
%%\begin{proof}
%%By definition of the coderivative, there exists $M >0$ such that for all
%%$(\overline{t},\mathbf{\overline{x}},\mathbf{\overline{y}}) \in \mathrm{gph}(W_\uparrow),$ $$
%%\xi^*(\overline{t}-t)+\langle\mathbf{v^*},\mathbf{\overline{x}}-\mathbf{x}
%%\rangle + \langle -\mathbf{w^*}, \mathbf{\overline{y}}-\mathbf{y}\rangle \leq M (|\overline{t}-t|^2 +
%%\|\mathbf{\overline{x}}-\mathbf{x} \|^2 + \| \mathbf{\overline{y}}-\mathbf{y} \|^2).$$
%%Choose $ \overline{t} = t$, $ \mathbf{\overline{x}} = \mathbf{x},$ and
%%$\mathbf{\overline{y}} = \mathbf{y} + \delta \mathbf{d}, \ \delta >0, \ \mathbf{d}\in P,$ to get
%%$$  \langle -\mathbf{w^*}, \delta \mathbf{d}\rangle \leq M \delta^2.$$ Dividing by $\delta$ on both side
%%and letting $\delta$ converges to 0 yields
%%$$ \langle -\mathbf{w^*}, \mathbf{d}\rangle \leq 0,$$ which shows that $-\mathbf{w^*} \in P^{-}$
%%or $\mathbf{w^*} \in P^{+}$.
%%\end{proof} \newline \newline
\begin{proposition} \label{prop:equivalence}
Let $W$ be a generalized contingent solution for \emph{(MOC)}. Then $W$ is generalized proximal solution
for \emph{(MOC)}.
\end{proposition}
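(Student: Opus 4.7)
The plan is to verify each of the three conditions in Definition \ref{def:genproximalsolution}. The terminal condition (\ref{eq:tercon2}) coincides verbatim with (\ref{eq:tercon1}) and is therefore inherited from the assumption that $W$ is a generalized contingent solution. The work therefore concentrates on the proximal Hamilton-Jacobi identity (\ref{eq:proximal_equation}) and the boundary condition (\ref{eq:boundary_condition}).

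The key ingredient for (\ref{eq:proximal_equation}) is the classical duality between proximal normals and contingent vectors: whenever $(\xi^*,\mathbf{v^*},-\mathbf{w^*}) \in N_{\mathrm{Graph}(W_\uparrow)}(t,\mathbf{x},\mathbf{y})$ and $(\tau,\mathbf{v},\mathbf{w}) \in T_{\mathrm{Graph}(W_\uparrow)}(t,\mathbf{x},\mathbf{y})$, one has
$$\xi^*\tau + \langle \mathbf{v^*},\mathbf{v}\rangle - \langle \mathbf{w^*},\mathbf{w}\rangle \leq 0.$$
I would derive this by selecting sequences $h_k \to 0^+$ and $(\tau_k,\mathbf{v}_k,\mathbf{w}_k) \to (\tau,\mathbf{v},\mathbf{w})$ witnessing the contingent direction, substituting $(t+h_k\tau_k,\mathbf{x}+h_k\mathbf{v}_k,\mathbf{y}+h_k\mathbf{w}_k)$ into the defining inequality of the proximal normal cone, dividing by $h_k$, and passing to the limit.

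Equation (\ref{eq:proximal_equation}) then follows by pairing this inequality with the two contingent directions furnished by Definition \ref{def:solution}. Fix $\mathbf{w^*}$ with $D^*W_\uparrow(t,\mathbf{x},\mathbf{y})(\mathbf{w^*}) \neq \emptyset$ and pick $(\xi^*,\mathbf{v^*})$ in this set. Using the direction $(1,\overline{\mathbf{f}},-\overline{\mathbf{L}})$ produced by (\ref{eq:vfirst}) gives $\xi^* + \langle\mathbf{v^*},\overline{\mathbf{f}}\rangle + \langle \mathbf{w^*},\overline{\mathbf{L}}\rangle \leq 0$, so the infimum of $\langle \mathbf{v^*},\mathbf{f}\rangle + \langle \mathbf{w^*},\mathbf{L}\rangle$ over $(\mathrm{FL})(\mathbf{x})$ is at most $-\xi^*$. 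Using the directions $(-1,-\mathbf{f},\mathbf{L})$ supplied by (\ref{eq:vsecond}) for each $(\mathbf{f},\mathbf{L}) \in (\mathrm{FL})(\mathbf{x})$ gives $\xi^* + \langle \mathbf{v^*},\mathbf{f}\rangle + \langle \mathbf{w^*},\mathbf{L}\rangle \geq 0$, so the same infimum is at least $-\xi^*$. The two bounds combine to yield (\ref{eq:proximal_equation}).

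For the boundary condition (\ref{eq:boundary_condition}), the same contingent directions supply the required approximating sequences. Given $\mathbf{y} \in W_\uparrow(0,\mathbf{x})$, write $\mathbf{y} = \mathbf{y}_0 + \mathbf{d}$ with $\mathbf{y}_0 \in W(0,\mathbf{x})$ and $\mathbf{d} \in P$. Applying (\ref{eq:vfirst}) at $(0,\mathbf{x},\mathbf{y}_0)$ yields $h_k \to 0^+$ and $(\tau_k,\mathbf{v}_k,\mathbf{w}_k) \to (1,\overline{\mathbf{f}},-\overline{\mathbf{L}})$ such that $\mathbf{y}_0 + h_k \mathbf{w}_k \in W_\uparrow(h_k\tau_k,\mathbf{x}+h_k\mathbf{v}_k)$; adding $\mathbf{d}$, which preserves membership in $W_\uparrow$, produces a sequence in $W_\uparrow(t_k,\mathbf{x}_k)$ with $t_k = h_k\tau_k > 0$ eventually, $(t_k,\mathbf{x}_k) \to (0,\mathbf{x})$, and limit $\mathbf{y}$. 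The argument at $t = T$ is entirely symmetric, using (\ref{eq:vsecond}) instead. The only delicate point is the contingent/proximal limit estimate of the second paragraph; once it is in hand, the rest amounts to a direct substitution of the directions provided by Definition \ref{def:solution}.
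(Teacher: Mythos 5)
Your proposal is correct and follows essentially the same route as the paper: the proximal-normal/contingent-direction polarity inequality on $\mathrm{Graph}(W_\uparrow)$, paired with the directions $(1,\overline{\mathbf{f}},-\overline{\mathbf{L}})$ from (\ref{eq:vfirst}) and $(-1,-\mathbf{f},\mathbf{L})$ from (\ref{eq:vsecond}) to get the two inequalities yielding (\ref{eq:proximal_equation}), and the same sequence construction (shifting by $\mathbf{d}\in P$, which $W_\uparrow$ absorbs) for the boundary condition (\ref{eq:boundary_condition}), with the $t=T$ case handled symmetrically via (\ref{eq:vsecond}). No substantive differences from the paper's argument.
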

\begin{proof}
We first check that (\ref{eq:proximal_equation}) is obtained. Let $(\xi^*,\mathbf{v^*}) \in D^*W_\uparrow(t,\mathbf{x},\mathbf{y})(\mathbf{w^*}).$ Then, $(\xi^*,\mathbf{v^*},-\mathbf{w^*})
\in  N_{\mathrm{gph}(W_\uparrow)}(t,\mathbf{x},\mathbf{y}).$ Hence, for all $(\xi,\mathbf{v},\mathbf{w}) \in T_{\mathrm{gph}(W_\uparrow)}(t,\mathbf{x},\mathbf{y})$,
\begin{equation} \label{eq:polarity}
  \xi \xi^* + \langle\mathbf{v} , \mathbf{v}^* \rangle + \langle \mathbf{w}, -\mathbf{w}^* \rangle \leq 0.
\end{equation}
From (\ref{eq:vfirst}), there exists $(\mathbf{\overline{f}},\mathbf{\overline{L}}) \in (\mathrm{FL})(\mathbf{x})$
such that
$$
 -\mathbf{\overline{L}} \in DW_\uparrow((t,\mathbf{x},\mathbf{y});(1,\mathbf{\overline{f}})),$$
 or
 $(1,\mathbf{\overline{f}},-\mathbf{\overline{L}}) \in T_{\mathrm{gph}(W_\uparrow)}(t,\mathbf{x},\mathbf{y})$. Hence,
 from (\ref{eq:polarity}), we get:
 $$\xi^* + \langle \mathbf{\overline{f}} , \mathbf{v}^* \rangle + \langle -\mathbf{\overline{L}}, -\mathbf{w}^* \rangle \leq 0,$$
 or
\begin{equation} \label{eq:inequality_1}
\xi^*+ \inf_{(\mathbf{f},\mathbf{L}) \in (\mathrm{FL})(\mathbf{x})} \langle
\mathbf{v^*},\mathbf{f}  \rangle + \langle \mathbf{w^*}, \mathbf{L} \rangle \leq 0.
\end{equation}
Also, from (\ref{eq:vsecond}), for all $(\mathbf{f},\mathbf{L}) \in (\mathrm{FL})(\mathbf{x})$,
$$
 \mathbf{L} \in DW_\uparrow((t,\mathbf{x},\mathbf{y});(-1,-\mathbf{f})),
$$
or
 $(-1,-\mathbf{f},\mathbf{L}) \in T_{\mathrm{gph}(W_\uparrow)}(t,\mathbf{x},\mathbf{y})$. Hence,
 from (\ref{eq:polarity}), we get:
 $$-\xi^* + \langle -\mathbf{f} , \mathbf{v}^* \rangle + \langle \mathbf{L}, -\mathbf{w}^* \rangle \leq 0,$$
 or
\begin{equation} \label{eq:inequality_2}
\xi^*+ \inf_{(\mathbf{f},\mathbf{L}) \in (\mathrm{FL})(\mathbf{x})} \langle
\mathbf{v^*},\mathbf{f}  \rangle + \langle \mathbf{w^*}, \mathbf{L} \rangle \geq 0.
\end{equation}
Combining (\ref{eq:inequality_1}) and (\ref{eq:inequality_2}) yields (\ref{eq:proximal_equation}). \\ \\
We turn out to checking that (\ref{eq:boundary_condition}) is obtained. We only check the first condition in (\ref{eq:boundary_condition}), as checking the
second condition is similar. Let $\mathbf{y} \in W_\uparrow(0,\mathbf{x})$. We have $\mathbf{y} =
\mathbf{\widetilde{y}} + \mathbf{d}$ with  $\mathbf{\widetilde{y}} \in W(0,\mathbf{x})$
and $\mathbf{d} \in P$. From (\ref{eq:vfirst}), there exists
$(\mathbf{\overline{f}},\mathbf{\overline{L}}) \in (\mathrm{FL})(\mathbf{x})$
such that
$$
 -\mathbf{\overline{L}} \in DW_\uparrow((0,\mathbf{x},\mathbf{\widetilde{y}});(1,\mathbf{\overline{f}})).
 $$
By definition of the contingent derivative, there exist sequences $t_k \rightarrow 1$, $\mathbf{f}_k \rightarrow \mathbf{\overline{f}},$
$\mathbf{L}_k \rightarrow \mathbf{\overline{L}},$ and $h_k \rightarrow 0^+$ such that
$$ \mathbf{\widetilde{y}}+ h_k (-\mathbf{L}_k) \in W_\uparrow(h_k t_k, \mathbf{x} + h_k \mathbf{f}_k).$$
Hence,
$$ \mathbf{y}+ h_k (-\mathbf{L}_k) \in W_\uparrow(h_k t_k, \mathbf{x} + h_k \mathbf{f}_k),$$
with $\mathbf{y}+ h_k (-\mathbf{L}_k) \rightarrow \mathbf{y}$, $h_k t_k \rightarrow 0^+$,
and $\mathbf{x} + h_k \mathbf{f}_k \rightarrow \mathbf{x}$. This shows that
$\mathbf{y} \in \limsup_{t \rightarrow  0^+, \ \mathbf{x'}\rightarrow \mathbf{x}} W_\uparrow(t,\mathbf{x'})$
and therefore the first condition in (\ref{eq:boundary_condition}) is obtained.
\end{proof}

\subsection{Uniqueness}

In this section, we first provide in Theorem \ref{th:prox1} a characterization of
set-valued maps satisfying (\ref{eq:proximal_equation})-(\ref{eq:tercon2}). The proof
of this theorem follows very closely the proof of Theorem 12.3.7  \cite[p.~460]{OptConTh5_book} for single objective optimal control problems. Using Theorem \ref{th:prox1} and the concept of extremal element maps, we are able to prove in
Corollary \ref{cor:conclusion} that
the set-valued return function $V$  is the unique generalized contingent and proximal solution for (MOC).\\
\begin{theorem} \label{th:prox1}
Let $W$ be a set-valued map from $I \times \mathbf{R}^n$ to $\mathbf{R}^p$. Assume the following
for $W$:\\
\begin{enumerate}
  \item $W_\uparrow$ is an outer semicontinuous set-valued map.
  \item For all $(t,\mathbf{x}) \in (0,T)\times \mathbf{R}^n$, $\mathbf{y} \in W(t,\mathbf{x})$, and
  $\mathbf{w^*} \in \mathbf{R}^p$ such that $D^*W_\uparrow(t,\mathbf{x},\mathbf{y})(\mathbf{w^*}) \neq \emptyset$, we have:
  \begin{equation} \label{eq:inwardpointing1}
   \forall (\xi^*,\mathbf{v^*}) \in D^*W_\uparrow(t,\mathbf{x},\mathbf{y})(\mathbf{w^*}), \
\xi^*+ \min_{(\mathbf{f},\mathbf{L}) \in (\mathrm{FL})(\mathbf{x})} \langle\mathbf{v^*},\mathbf{f}  \rangle + \langle \mathbf{w^*}, \mathbf{L} \rangle = 0.
\end{equation}
  \item For all $\mathbf{x} \in \mathbf{R}^n$, $$W_\uparrow(0,\mathbf{x}) \subset \limsup_{t \rightarrow 0^+, \ \mathbf{x'}\rightarrow \mathbf{x}} W_\uparrow(t,\mathbf{x'}) \ \mathrm{and} \
  W_\uparrow(T,\mathbf{x}) \subset \limsup_{t \rightarrow  T^-,\ \mathbf{x'}\rightarrow \mathbf{x}} W_\uparrow(t,\mathbf{x'}).$$
  \item For all $\mathbf{x} \in \mathbf{R}^n$, $W(T,\mathbf{x}) = \mathbf{0}.$\\
\end{enumerate}
Then, $\forall (t,\mathbf{x}) \in  I \times \mathbf{R}^n$,
\begin{equation} \label{eq:firstinclusion} V(t,\mathbf{x}) \subset W_\uparrow(t,\mathbf{x}), \end{equation}
and
\begin{equation} \label{eq:secondinclusion} W(t,\mathbf{x})\subset V_\uparrow(t,\mathbf{x}). \end{equation}
\end{theorem}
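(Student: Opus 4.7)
The plan is to recognize the coderivative equation (\ref{eq:inwardpointing1}) as a Hamilton--Jacobi condition equivalent to two invariance properties of $\mathrm{gph}(W_\uparrow)$ under the augmented differential inclusion $(\dot t, \dot{\mathbf{x}}, \dot{\mathbf{y}}) \in \{1\} \times \{(\mathbf{f}, -\mathbf{L}) : (\mathbf{f}, \mathbf{L}) \in (\mathrm{FL})(\mathbf{x})\}$. By the definition of the coderivative, a proximal normal to $\mathrm{gph}(W_\uparrow)$ at $(t, \mathbf{x}, \mathbf{y})$ has the form $(\xi^*, \mathbf{v}^*, -\mathbf{w}^*)$ with $(\xi^*, \mathbf{v}^*) \in D^*W_\uparrow(t, \mathbf{x}, \mathbf{y})(\mathbf{w}^*)$, and its pairing with an augmented velocity $(1, \mathbf{f}, -\mathbf{L})$ is exactly $\xi^* + \langle \mathbf{v}^*, \mathbf{f}\rangle + \langle \mathbf{w}^*, \mathbf{L}\rangle$. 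Reading (\ref{eq:inwardpointing1}) as a $\leq 0$ inequality at every proximal normal yields the Clarke--Ledyaev--Stern--Wolenski characterization of weak forward invariance, while the $\geq 0$ half yields strong backward invariance. Both invariance theorems apply because $(\mathrm{FL})(\mathbf{x})$ is nonempty, convex, compact (Lemma \ref{lem:lemma2}) and upper semicontinuous in $\mathbf{x}$, and $\mathrm{gph}(W_\uparrow)$ is closed by the assumed outer semicontinuity of $W_\uparrow$. The overall template parallels the single-objective proof in \cite[Theorem 12.3.7]{OptConTh5_book}.

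To obtain (\ref{eq:secondinclusion}), fix $\mathbf{y} \in W(t, \mathbf{x})$ with $t \in (0, T)$. Weak invariance yields an arc $\mathbf{x}(\cdot)$ with $\mathbf{x}(t) = \mathbf{x}$ and a measurable selection $(\mathbf{f}(s), \mathbf{L}(s)) \in (\mathrm{FL})(\mathbf{x}(s))$ satisfying $\dot{\mathbf{x}}(s) = \mathbf{f}(s)$ a.e.\ and $(s, \mathbf{x}(s), \mathbf{y} - \int_t^s \mathbf{L}(\tau)\,d\tau) \in \mathrm{gph}(W_\uparrow)$ for all $s \in [t, T]$, the endpoint being captured by closedness of the graph. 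From $W(T, \cdot) = \{\mathbf{0}\}$, it follows that $\mathbf{y} - \int_t^T \mathbf{L}(\tau)\,d\tau \in P$. The Filippov--Wazewski relaxation theorem identifies $\int_t^T \mathbf{L}(\tau)\,d\tau$ as a limit of genuine costs $\mathbf{J}(t, T, \mathbf{x}, \mathbf{u}_k)$, so it belongs to $\mathrm{cl}(Y(t, \mathbf{x}))$; hence $\mathbf{y} \in \mathrm{cl}(Y(t, \mathbf{x})) + P = V(t, \mathbf{x}) + P = V_\uparrow(t, \mathbf{x})$ by external stability (Corollary \ref{cor:extstabcompactset}). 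The boundary case $t = 0$ is handled by applying (\ref{eq:boundary_condition}) to approximate $(0, \mathbf{x}, \mathbf{y})$ by interior points of $\mathrm{gph}(W_\uparrow)$, running the above at each, and then invoking outer semicontinuity of $V_\uparrow$ (Proposition \ref{prop:outersemi}); the case $t = T$ is immediate from $W(T, \cdot) = V(T, \cdot) = \{\mathbf{0}\}$.

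For (\ref{eq:firstinclusion}), fix $\mathbf{u}(\cdot) \in \mathcal{U}$ and let $\mathbf{x}(\cdot)$ denote the associated trajectory starting at $\mathbf{x}$ at time $t$; ordinary trajectories are trajectories of the convexified inclusion, so strong backward invariance applies to them. Starting from $(T, \mathbf{x}(T), \mathbf{0}) \in \mathrm{gph}(W_\uparrow)$, approximated from the interior by (\ref{eq:boundary_condition}), strong invariance propagates the curve backward so that, by the closedness of $\mathrm{gph}(W_\uparrow)$, $(s, \mathbf{x}(s), \int_s^T \mathbf{L}(\mathbf{x}(\tau), \mathbf{u}(\tau))\,d\tau) \in \mathrm{gph}(W_\uparrow)$ for every $s \in [t, T]$. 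Setting $s = t$ gives $\mathbf{J}(t, T, \mathbf{x}, \mathbf{u}) \in W_\uparrow(t, \mathbf{x})$; varying $\mathbf{u}$ yields $Y(t, \mathbf{x}) \subset W_\uparrow(t, \mathbf{x})$, and closedness of $W_\uparrow(t, \mathbf{x})$ gives $V(t, \mathbf{x}) \subset \mathrm{cl}(Y(t, \mathbf{x})) \subset W_\uparrow(t, \mathbf{x})$. The chief obstacle is the clean deployment of the nonsmooth invariance theorems in the augmented $\mathbf{R}^{1+n+p}$ setting: verifying that (\ref{eq:inwardpointing1}), imposed only on $\mathbf{y} \in W(t, \mathbf{x})$, together with the extremal structure of $W$, is enough to produce the required invariance of $\mathrm{gph}(W_\uparrow)$, and that the boundary assumption (\ref{eq:boundary_condition}) combined with outer semicontinuity is enough to push all invariance consequences through to $t \in \{0, T\}$.
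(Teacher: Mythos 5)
Your proposal follows essentially the same route as the paper: both inclusions are obtained by applying the strong and weak invariance theorems for time-varying systems to the augmented graph of $W_\uparrow$, with the two halves of (\ref{eq:inwardpointing1}) supplying the respective inward-pointing conditions, closedness of $\mathrm{gph}(W_\uparrow)$ coming from outer semicontinuity, the boundary condition (\ref{eq:boundary_condition}) verifying the limsup hypothesis at the endpoints, and the relaxation theorem converting the weak-invariance arc into a limit of genuine costs. The only cosmetic difference is that you phrase the strong-invariance half as backward invariance of the forward augmented inclusion, whereas the paper explicitly time-reverses the system before applying the (forward) Strong Invariance Theorem.
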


\begin{proof} \newline \newline
Proof of (\ref{eq:firstinclusion}): the proof of (\ref{eq:firstinclusion}) uses the Strong Invariance Theorem for Time-Varying
Systems (Theorem 12.2.4 \cite[pp.~449--452]{OptConTh5_book}). \\ \\
Let $(t,\mathbf{x}) \in  [0,T) \times \mathbf{R}^n$ and $\mathbf{u}(\cdot) \in \mathcal{U}$.
For $s \in  [0,T-t]$, define:\\
\begin{itemize}
  \item $\mathbf{\widetilde{x}}(s;t,\mathbf{x},\mathbf{u}(\cdot))
= \mathbf{x}(T-s;t,\mathbf{x},\mathbf{u}(\cdot)).$
  \item $\mathbf{\widetilde{a}}(s;t,\mathbf{x},\mathbf{u}(\cdot))
= \int_{T-s}^T \mathbf{L}(\mathbf{x}(\tau;t,\mathbf{x},\mathbf{u}(\cdot)),\mathbf{u}(\tau)) \ \mathrm{d}\tau.$
  \item  $(\widetilde{\mathrm{FL}})(\mathbf{x}) = \mathrm{cl}(\mathrm{co}( \{(-\mathbf{f}(\mathbf{x},\mathbf{u}),\mathbf{L}(\mathbf{x},\mathbf{u})), \ \mathbf{u} \in U\})).$
  \item $\widetilde{W}(s,\mathbf{x}) = W(T-s,\mathbf{x}).$\\
\end{itemize}
It is easy to check that $s \rightarrow (\mathbf{\widetilde{x}}
(s;t,\mathbf{x},\mathbf{u}(\cdot)),\mathbf{\widetilde{a}}(s;t,\mathbf{x},\mathbf{u}(\cdot)))$
is a solution of the following differential inclusion:
\begin{displaymath}
\left\{ \begin{array}{lcl}
    (\mathbf{\dot{x}}(s),\mathbf{\dot{a}}(s)) & \in & (\widetilde{\mathrm{FL}})(\mathbf{x}(s)), \  a.e. \  s \in [0,T-t],\\
    (\mathbf{x}(0),\mathbf{a}(0)) & = & (\mathbf{x}(T;t,\mathbf{x},\mathbf{u}(\cdot)),\mathbf{0}).
   \end{array} \right.
\end{displaymath}
Moreover, as  $(\xi^*,\mathbf{v^*}) \in D^*W_\uparrow(t,\mathbf{x},\mathbf{y})(\mathbf{w^*})$ implies that
$(-\xi^*,\mathbf{v^*}) \in D^*\widetilde{W}_\uparrow(t,\mathbf{x},\mathbf{y})(\mathbf{w^*})$, from
(\ref{eq:inwardpointing1}), we get:
\begin{equation} \label{eq:inwardpointing2}
\forall (\mathbf{f},\mathbf{L}) \in (\widetilde{\mathrm{FL}})(\mathbf{x}), \ \xi^*+  \langle\mathbf{v}^*,\mathbf{f}  \rangle - \langle \mathbf{w}^*, \mathbf{L} \rangle \leq 0.
\end{equation}
Define now the set-valued map $\widetilde{Q}$ from $[0,T-t]$ to $\mathbf{R}^{n+p}$ by
$$ \widetilde{Q}(s) = \{(\mathbf{x},\mathbf{a}), \ \mathbf{a} \in \widetilde{W}_\uparrow(s,\mathbf{x})\},$$
and consider the following constrained optimal control problem:
\begin{displaymath}
\left\{ \begin{array}{lcl}
        (\mathbf{\dot{x}}(s),\mathbf{\dot{a}}(s)) & \in &  (\widetilde{\mathrm{FL}})(\mathbf{x}(s)), \  a.e. \  s \in [0,T-t],\\
    (\mathbf{x}(0),\mathbf{a}(0)) & = & (\mathbf{x}(T;t,\mathbf{x},\mathbf{u}(\cdot)),\mathbf{0}),\\
        (\mathbf{x}(s),\mathbf{a}(s)) & \in &  \widetilde{Q}(s) \ \mathrm{for} \ \mathrm{all} \ s \in [0,T-t].
       \end{array} \right.
\end{displaymath}
Our objective is to apply the Strong Invariance Theorem for Time-Varying Systems to this problem. To do so,
we need to check that the conditions required to apply this theorem are satisfied. First,
as $W(T,\mathbf{x}) = \mathbf{0}$ by assumption 4, we have $\widetilde{Q}(0) = \mathbf{R}^n \times P$ and
hence $(\mathbf{x}(0),\mathbf{a}(0)) \in \widetilde{Q}(0)$. The condition
$(\mathbf{x}(0),\mathbf{a}(0))
\in \limsup_{s\rightarrow 0^+}  \widetilde{Q}(s)$ directly follows from assumption 3. Moreover, from the assumptions in \S \ref{s:P}, the
set-valued map $\widetilde{\mathrm{FL}}$ possesses the required properties, and $\mathrm{gph}(\widetilde{Q}) $ is closed
as $\mathrm{gph}(\widetilde{Q}) = \mathrm{gph}(\widetilde{W}_\uparrow)$ and $W_\uparrow$ is outer semicontinuous
by assumption 1. It remains to show the ``inward-pointing'' condition. Hence,
let $(s,\mathbf{x},\mathbf{a}) \in \mathrm{gph}(\widetilde{Q})$ with $s \in (0,T-t)$ at which
$N_{\mathrm{gph}(\widetilde{Q})}(s,\mathbf{x},\mathbf{a})$ is nonempty.
%% Remark: note that this necessarily implies that $\mathbf{z} \in w(t,\mathbf{z})$.
Take any vector $(\xi^*,\mathbf{v^*},\mathbf{w^*}) \in N_{\mathrm{gph}(\widetilde{Q})}(s,\mathbf{x},\mathbf{a}),$
we need to show that
$$ \xi^* + \max_{(\mathbf{f},\mathbf{L})\in (\widetilde{\mathrm{FL}})(\mathbf{x})} \langle (\mathbf{v^*},\mathbf{w^*}),(\mathbf{f},\mathbf{L}) \rangle \leq 0,$$
or
$$ \forall (\mathbf{f},\mathbf{L}) \in (\widetilde{\mathrm{FL}})(\mathbf{x}), \ \xi^* +  \langle \mathbf{v^*},\mathbf{f} \rangle + \langle \mathbf{w^*},\mathbf{L} \rangle \leq 0 .$$
As already mentioned,  $\mathrm{gph}(\widetilde{Q}) = \mathrm{gph}(\widetilde{W}_\uparrow).$ Hence, $N_{\mathrm{gph}(\widetilde{Q})}(s,\mathbf{x},\mathbf{a}) = N_{\mathrm{gph}(\widetilde{W}_\uparrow)}(s,\mathbf{x},\mathbf{a})$, and
by definition of the coderivative $(\xi^*,\mathbf{v^*}) \in D^*\widetilde{W}_\uparrow(s,\mathbf{x},\mathbf{a})(-\mathbf{w^*})$. Hence, from (\ref{eq:inwardpointing2}), we get:
$$ \forall (\mathbf{f},\mathbf{L}) \in (\widetilde{\mathrm{FL}})(\mathbf{x}), \ \xi^* +  \langle \mathbf{v^*},\mathbf{f} \rangle + \langle \mathbf{w^*},\mathbf{L} \rangle \leq 0,$$
which is what was needed to show.\\ \\
The Strong Invariance Theorem for Time-Varying Systems therefore gives that for all
$(t,\mathbf{x}) \in  [0,T) \times \mathbf{R}^n$, $\mathbf{u}(\cdot) \in \mathcal{U}$,
and $s \in[0,T-t]$,
$$ (\mathbf{\widetilde{x}}(s;t,\mathbf{x},\mathbf{u}(\cdot)),\mathbf{\widetilde{a}}(s;t,\mathbf{x},\mathbf{u}(\cdot))) \in \widetilde{Q}(s),$$
or $\mathbf{\widetilde{a}}(s;t,\mathbf{x},\mathbf{u}(\cdot)) \in \widetilde{W}_\uparrow(s,\mathbf{\widetilde{x}}(s;t,\mathbf{x},\mathbf{u}(\cdot)))$.
Hence, for $s = T-t$,
$$ \mathbf{\widetilde{a}}(T-t;t,\mathbf{x},\mathbf{u}(\cdot)) = \mathbf{J}(t,T,\mathbf{x},\mathbf{u}(\cdot)) \in \widetilde{W}_\uparrow(T-t,\mathbf{x}(T-t;t,\mathbf{x},\mathbf{u}(\cdot))) =  W_\uparrow(t,\mathbf{x}).$$
This holds for all control $\mathbf{u}(\cdot) \in \mathcal{U}$. Hence, the objective space $Y(t,\mathbf{x})$
satisfies the inclusion
$$ Y(t,\mathbf{x}) \subset W_\uparrow(t,\mathbf{x}),$$
or, as $W_\uparrow$ takes as values closed sets,
$$ V(t,\mathbf{x}) \subset \mathrm{cl}(Y(t,\mathbf{x})) \subset W_\uparrow(t,\mathbf{x}).$$
From assumption 4 and recalling that $ V(T,\mathbf{x}) = \{\mathbf{0}\}$, we finally get
(\ref{eq:firstinclusion}) for all $(t,\mathbf{x}) \in I \times \mathbf{R}^n$.\\ \\
Proof of (\ref{eq:secondinclusion}): the proof of (\ref{eq:secondinclusion}) uses the Weak Invariance Theorem
 for Time-Varying Systems (Theorem 12.2.2 \cite[pp.~446--448]{OptConTh5_book}). \\ \\
Take an arbitrary point $(t,\mathbf{x}) \in  [0,T) \times \mathbf{R}^n$ and
$\mathbf{y} \in W(t,\mathbf{x}).$ Define the set-valued map $Q$ from $[t,T]$
to $\mathbf{R}^{n+p}$ by
$$ Q(s) = \{(\mathbf{x},\mathbf{a}), \ \mathbf{a} \in W_\uparrow(s,\mathbf{x})\},$$
and consider the following optimal control problem:
\begin{equation} \label{eq:weakmoc}
\left\{ \begin{array}{lcl}
        (\mathbf{\dot{x}}(s),\mathbf{\dot{a}}(s)) & \in &  (\widetilde{\mathrm{FL}})(\mathbf{x}(s)), \  a.e. \  s \in [ t,T],\\
        (\mathbf{x}(t), \mathbf{a}(t)) & = & (\mathbf{x},\mathbf{y}),\\
        (\mathbf{x}(s),\mathbf{a}(s)) &  \in & Q(s)  \ \mathrm{for} \ \mathrm{all} \ s \in [t,T],
       \end{array} \right.
\end{equation}
where $(\widetilde{\mathrm{FL}})(\mathbf{x}) = \mathrm{cl}(\mathrm{co}( \{(\mathbf{f}(\mathbf{x},\mathbf{u}),-\mathbf{L}(\mathbf{x},\mathbf{u})), \ \mathbf{u} \in U\})).$
Our objective is to apply the Weak Invariance Theorem for Time-Varying Systems to this problem.
To do so,
we need to check that the conditions required to apply this theorem are satisfied. First, as $\mathbf{y} \in W(t,\mathbf{x})$, $(\mathbf{x}(t), \mathbf{a}(t))
\in Q(t)$. The condition
$(\mathbf{x}(0),\mathbf{a}(0))
\in \limsup_{s\rightarrow 0^+}  Q(s)$ directly follows from assumption 3. Moreover, from the assumptions in \S \ref{s:P}, the
set-valued map $\widetilde{\mathrm{FL}}$ possesses the required properties, and $\mathrm{gph}(Q) $ is closed
as $\mathrm{gph}(Q) = \mathrm{gph}(W_\uparrow)$ and $W_\uparrow$ is outer semicontinuous
by assumption 1. It remains to show the ''inward-pointing'' condition. Let $(s,\mathbf{x},\mathbf{a}) \in \mathrm{gph}(Q)$
with $s \in (t,T)$ at which $N_{\mathrm{gph}(Q)}(s,\mathbf{x},\mathbf{a})$ is nonempty.
Take any vector $(\xi^*,\mathbf{v^*},\mathbf{w^*}) \in N_{\mathrm{gph}(Q)}(s,\mathbf{x},\mathbf{a}),$ we need to show that
$$ \xi^* + \min_{(\mathbf{f},\mathbf{L})\in (\mathrm{\widetilde{FL}})(\mathbf{x})} \langle (\mathbf{v^*},\mathbf{w^*}),
(\mathbf{f},\mathbf{L}) \rangle \leq 0,$$
or
$$ \xi^* +  \min_{(\mathbf{f},\mathbf{L}) \in (\mathrm{\widetilde{FL}})(\mathbf{x})}
\langle \mathbf{v^*},\mathbf{f} \rangle + \langle \mathbf{w^*},\mathbf{L} \rangle \leq 0.$$
As already mentioned,  $\mathrm{gph}(Q) = \mathrm{gph}(W_\uparrow).$ Hence,
$N_{\mathrm{gph}(Q)}(s,\mathbf{x},\mathbf{a}) = N_{\mathrm{gph}(W_\uparrow)}(s,\mathbf{x},\mathbf{a})$, and
by definition of the coderivative $(\xi^*,\mathbf{v^*}) \in D^*W_\uparrow(s,\mathbf{x},\mathbf{a})(-\mathbf{w^*})$. Hence, from
 (\ref{eq:inwardpointing1}), we get:
  $$
   \xi^*+ \min_{(\mathbf{f},\mathbf{L}) \in (\mathrm{FL})(\mathbf{x})}  \langle \mathbf{v^*},\mathbf{f}  \rangle + \langle -\mathbf{w^*}, \mathbf{L} \rangle \leq 0,
$$
or
  $$
   \xi^*+ \min_{(\mathbf{f},\mathbf{L}) \in (\mathrm{\widetilde{FL}})(\mathbf{x})}  \langle \mathbf{v^*},\mathbf{f}  \rangle + \langle \mathbf{w^*}, \mathbf{L} \rangle \leq 0,
$$
which is what was needed to show.\\ \\
The Weak Invariance Theorem for Time-Varying Systems therefore gives that there exists a function $s \in [t,T] \rightarrow (\mathbf{x}(s),\mathbf{a}(s))$ solution to (\ref{eq:weakmoc}). Hence,
for all $s \in [t,T], \ (\mathbf{x}
(s),\mathbf{a}(s)) \in Q(s)$,
or $$\mathbf{a}(s) \in W_\uparrow(s,\mathbf{x}(s)).$$
Take $s = T$ to get:
$$\mathbf{a}(T) \in W_\uparrow(T,\mathbf{x}(T)) = P.$$
Now, we apply the Relaxation Theorem (Theorem 2.7.2 \cite[p.~96]{OptConTh5_book}) to $s \in [t,T] \rightarrow (\mathbf{x}(s),\mathbf{a}(s))$. Hence, there exists a sequence $\mathbf{u_n}(\cdot)$ such  that
$$ \lim_{n \rightarrow \infty} \mathbf{x}(\cdot;t,\mathbf{x},\mathbf{u_n}(\cdot)) = \mathbf{x}(\cdot) \ \mathrm{and} \ \lim_{n \rightarrow \infty} \mathbf{y} + \int_{t}^{\cdot} -\mathbf{L}(\mathbf{x}(s;t,
\mathbf{x},\mathbf{u_n}(\cdot)),\mathbf{u_n}(s)) \ \mathrm{d}s = \mathbf{a}(\cdot).$$
In particular, for $s = T$, we get:
$$ \lim_{n \rightarrow \infty}  \mathbf{y} - \int_{t}^{T} \mathbf{L}(\mathbf{x}(s;t,
\mathbf{x},\mathbf{u_n}(\cdot)),\mathbf{u_n}(s)) \ \mathrm{d}s = \mathbf{a}(T).$$
From above, we know that $\mathbf{a}(T) \in P$. Moreover, $\int_{t}^{T} \mathbf{L}(\mathbf{x}(s;t,
\mathbf{x},\mathbf{u_n}(\cdot)),\mathbf{u_n}(s)) \ \mathrm{d}s = \mathbf{J}(t,T,\mathbf{x},\mathbf{u_n}(\cdot))) \in Y(t,\mathbf{x})$.
Hence, $$\mathbf{y} \in \mathrm{cl}(Y(t,\mathbf{x})) + P = V(t,\mathbf{x}) + P = V_\uparrow(t,\mathbf{x}).$$
 This holds
for all $\mathbf{y} \in W(t,\mathbf{x}),$ hence $$ W(t,\mathbf{x})\subset V_\uparrow(t,\mathbf{x}).$$
From assumption 4 and recalling that $ V(T,\mathbf{x}) = \{\mathbf{0}\}$, we finally get
(\ref{eq:secondinclusion}) for all $(t,\mathbf{x}) \in I \times \mathbf{R}^n$.
\end{proof}\\

\begin{corollary} \label{cor:uniqueness}
Let $W$ be a set-valued map from $I \times \mathbf{R}^n$ to $\mathbf{R}^p$. Assume that $W$ is an extremal element map.
Then, under the assumptions of Theorem \emph{\ref{th:prox1}}, we have:
$$\forall (t,\mathbf{x}) \in  I \times \mathbf{R}^n, \ W(t,\mathbf{x}) =  V(t,\mathbf{x}).$$
\end{corollary}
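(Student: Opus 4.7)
The plan is to combine Theorem \ref{th:prox1} with the extremal element property shared by $V$ and $W$. Since the set-valued return function $V$ is itself an extremal element map (from the definition of minimal elements, both $V(t,\mathbf{x}) \cap (\mathbf{y}-P) = \{\mathbf{y}\}$ and $V(t,\mathbf{x}) \cap (\mathbf{y}+P) = \{\mathbf{y}\}$ hold for each $\mathbf{y} \in V(t,\mathbf{x})$), applying Theorem \ref{th:prox1} immediately yields the two inclusions
\begin{equation*}
V(t,\mathbf{x}) \subset W(t,\mathbf{x}) + P, \qquad W(t,\mathbf{x}) \subset V(t,\mathbf{x}) + P,
\end{equation*}
for every $(t,\mathbf{x}) \in I \times \mathbf{R}^n$. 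The remainder of the proof is an algebraic extraction of equality from these two inclusions using the extremal character of both maps and pointedness of $P$.

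First I would show $V(t,\mathbf{x}) \subset W(t,\mathbf{x})$. Fix $\mathbf{v} \in V(t,\mathbf{x})$. The first inclusion gives $\mathbf{v} = \mathbf{w} + \mathbf{p}$ with $\mathbf{w} \in W(t,\mathbf{x})$ and $\mathbf{p} \in P$; the second inclusion applied to $\mathbf{w}$ gives $\mathbf{w} = \mathbf{v}' + \mathbf{p}'$ with $\mathbf{v}' \in V(t,\mathbf{x})$ and $\mathbf{p}' \in P$. Therefore $\mathbf{v} = \mathbf{v}' + (\mathbf{p}+\mathbf{p}') \in \mathbf{v}' + P$, and since $V$ is an extremal element map, property (\ref{eq:property2}) applied at $\mathbf{v}'$ forces $\mathbf{v} = \mathbf{v}'$. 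Pointedness of $P$ (so $P \cap (-P) = \{\mathbf{0}\}$) together with $\mathbf{p} + \mathbf{p}' = \mathbf{0}$ and $\mathbf{p}, \mathbf{p}' \in P$ then yields $\mathbf{p} = \mathbf{p}' = \mathbf{0}$, giving $\mathbf{v} = \mathbf{w} \in W(t,\mathbf{x})$.

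Next I would prove the reverse inclusion $W(t,\mathbf{x}) \subset V(t,\mathbf{x})$. Take $\mathbf{w} \in W(t,\mathbf{x})$. The second inclusion writes $\mathbf{w} = \mathbf{v} + \mathbf{p}$ with $\mathbf{v} \in V(t,\mathbf{x})$, $\mathbf{p} \in P$. By what has just been shown, $\mathbf{v} \in W(t,\mathbf{x})$. Hence both $\mathbf{w}$ and $\mathbf{v}$ lie in $W(t,\mathbf{x})$ and $\mathbf{w} \in \mathbf{v} + P$. Applying property (\ref{eq:property2}) for $W$ at $\mathbf{v}$ forces $\mathbf{w} = \mathbf{v}$, so $\mathbf{w} \in V(t,\mathbf{x})$.

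There is no real analytical obstacle here: all the heavy lifting (invariance theorems, dynamic programming, contingent/coderivative machinery) has already been done in Theorem \ref{th:prox1}. The only subtlety to keep in mind is to exploit the extremal property (\ref{eq:property2}) (upward anti-chain) rather than (\ref{eq:property1}), and to use pointedness of $P$ once to cancel the sum $\mathbf{p}+\mathbf{p}' = \mathbf{0}$; neither property (\ref{eq:property1}) alone nor closedness of $P$ alone would suffice for the cancellation step.
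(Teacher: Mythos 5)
Your argument is correct and essentially the paper's own proof: both derive $V(t,\mathbf{x})\subset W(t,\mathbf{x})$ by chaining the two inclusions of Theorem \ref{th:prox1} and invoking the extremal element property together with pointedness of $P$; for the reverse inclusion the paper simply interchanges the roles of $V$ and $W$, while you reuse the inclusion just proved and apply (\ref{eq:property2}) for $W$ --- a negligible variation. Only your closing aside is slightly off: property (\ref{eq:property1}) applied at $\mathbf{v}$ would force $\mathbf{v}=\mathbf{v}'$ just as well as (\ref{eq:property2}) applied at $\mathbf{v}'$; what is genuinely indispensable is pointedness of $P$ for the cancellation $\mathbf{p}=\mathbf{p}'=\mathbf{0}$.
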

\begin{proof}
Let $\mathbf{y_1} \in V(t,\mathbf{x}).$ From (\ref{eq:firstinclusion}),
$\mathbf{y_1} = \mathbf{w} + \mathbf{d_1}$ for some $\mathbf{w} \in W(t,\mathbf{x})$
and $\mathbf{d_1} \in P.$ From (\ref{eq:secondinclusion}), $\mathbf{w} = \mathbf{y_2} + \mathbf{d_2}$
for some $\mathbf{y_2} \in V(t,\mathbf{x})$
and $\mathbf{d_2} \in P.$ Hence, $\mathbf{y_1} = \mathbf{y_2} + \mathbf{d_1} + \mathbf{d_2}$. As $V$ is an
extremal element map, we must have $\mathbf{d_1} + \mathbf{d_2} = 0,$ which implies $\mathbf{d_1} = \mathbf{d_2} = 0$,
as $P$ is pointed. Hence, $\mathbf{y_1} \in  W(t,\mathbf{x})$ which shows that
$V(t,\mathbf{x}) \subset W(t,\mathbf{x})$. Inverting the role
of $V$ and $W$ yields $W(t,\mathbf{x}) \subset V(t,\mathbf{x})$,
and finally $W(t,\mathbf{x}) = V(t,\mathbf{x}).$
\end{proof} \\

Corollary \ref{cor:conclusion} summarizes the findings of this paper.\\

\begin{corollary} \label{cor:conclusion}
The set-valued return map $V$ is the unique  generalized contingent and proximal solution for \emph{(MOC)}
such that the set-valued map $V_\uparrow$ is outer semicontinuous.
\end{corollary}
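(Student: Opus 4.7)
The plan is to assemble the pieces already proved in the paper: existence is essentially immediate, and uniqueness reduces to an invocation of Theorem \ref{th:prox1} via Corollary \ref{cor:uniqueness}.

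For existence, I would first apply Proposition \ref{prop:setvaluedmap} to conclude that $V$ is a generalized contingent solution for (MOC). Next, Proposition \ref{prop:equivalence} turns any generalized contingent solution into a generalized proximal solution, so $V$ is also a generalized proximal solution. Finally, Proposition \ref{prop:outersemi} gives outer semicontinuity of $V_\uparrow$. So $V$ meets all three requirements.

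For uniqueness, let $W$ be any set-valued map from $I \times \mathbf{R}^n$ to $\mathbf{R}^p$ which is simultaneously a generalized contingent solution and a generalized proximal solution for (MOC), and for which $W_\uparrow$ is outer semicontinuous. By Definition \ref{def:solution} (or Definition \ref{def:genproximalsolution}), $W$ is an extremal element map. I would then verify the four hypotheses of Theorem \ref{th:prox1} for $W$: (1) outer semicontinuity of $W_\uparrow$ is in the standing assumption on $W$; (2) the coderivative identity (\ref{eq:inwardpointing1}) is exactly (\ref{eq:proximal_equation}) from the definition of a generalized proximal solution; (3) the two $\limsup$ inclusions at $t=0$ and $t=T$ are (\ref{eq:boundary_condition}); and (4) the terminal condition $W(T,\mathbf{x}) = \{\mathbf{0}\}$ is (\ref{eq:tercon2}). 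With these hypotheses satisfied, Corollary \ref{cor:uniqueness} applies (since $W$ is an extremal element map) and yields $W(t,\mathbf{x}) = V(t,\mathbf{x})$ for every $(t,\mathbf{x}) \in I \times \mathbf{R}^n$.

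Since all of the substantive work has already been carried out in Propositions \ref{prop:setvaluedmap}, \ref{prop:outersemi}, \ref{prop:equivalence}, Theorem \ref{th:prox1}, and Corollary \ref{cor:uniqueness}, there is no real obstacle here; the only step requiring any care is cross-checking that the hypotheses of Theorem \ref{th:prox1} are literally the conditions in Definition \ref{def:genproximalsolution} (plus the outer semicontinuity assumption), which is a matter of matching symbols rather than proving anything new. Consequently the proof will be short and consist almost entirely of citations.
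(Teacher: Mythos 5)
Your proposal is correct and follows essentially the same route as the paper: existence via Propositions \ref{prop:setvaluedmap}, \ref{prop:outersemi}, and \ref{prop:equivalence}, and uniqueness by checking that the hypotheses of Theorem \ref{th:prox1} hold (here the compactness of $(\mathrm{FL})(\mathbf{x})$ from Lemma \ref{lem:lemma2} makes the infimum in (\ref{eq:proximal_equation}) a minimum, matching (\ref{eq:inwardpointing1})) and then invoking Corollary \ref{cor:uniqueness}. The only cosmetic difference is that the paper's uniqueness step assumes $W$ is merely a generalized proximal solution with $W_\uparrow$ outer semicontinuous, whereas you additionally assume it is a generalized contingent solution but never use that extra hypothesis.
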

\begin{proof}
In \S \ref{s:setvalued}, we have shown that $V$ is a generalized contingent solution for (MOC) and
that the set-valued map $V_\uparrow$ is outer semicontinuous. From Proposition \ref{prop:equivalence},
it follows that $V$ is also a generalized proximal solution for (MOC). Now, let $W$ be a generalized
proximal solution for (MOC) such that the set-valued map $W_\uparrow$ is outer semicontinuous.
By definition of the generalized proximal solution, W is an extremal element map and
satisfies the assumptions 2, 3, and 4 of Theorem  \ref{th:prox1}. Moreover,
$W_\uparrow$ is outer semicontinuous. Hence, Corollary \ref{cor:uniqueness} applies and we get
$W=V$.
\end{proof}

\section*{Acknowledgments}

The author wishes to thank Prof. Philip Loewen for helpful
discussions and encouragements.

\bibliography{E:/Papers/Bibliography/biblio}
\bibliographystyle{siam}

\end{document}